\crefname{claim}{Claim}{Claims}
\crefname{condition}{Condition}{Conditions}
\crefname{construction}{Construction}{Constructions}
\crefname{corollary}{Corollary}{Corollaries}
\crefname{definition}{Definition}{Definitions}
\crefname{example}{Example}{Examples}
\crefname{exercise}{Exercise}{Exercises}
\crefname{lemma}{Lemma}{Lemmas}
\crefname{observation}{Observation}{Observations}
\crefname{proposition}{Proposition}{Propositions}
\crefname{question}{Question}{Questions}
\crefname{remark}{Remark}{Remarks}
\crefname{theorem}{Theorem}{Theorems}
\crefname{claim}{claim}{claims}
\crefname{condition}{condition}{conditions}
\crefname{construction}{claim}{constructions}
\crefname{corollary}{corollary}{corollaries}
\crefname{definition}{definition}{definitions}
\crefname{example}{example}{examples}
\crefname{exercise}{exercise}{exercises}
\crefname{lemma}{lemma}{lemmas}
\crefname{observation}{observation}{observations}
\crefname{proposition}{proposition}{propositions}
\crefname{question}{question}{questions}
\crefname{remark}{remark}{remarks}
\crefname{theorem}{theorem}{theorems}
\Crefname{claim}{Claim}{Claims}
\Crefname{condition}{Condition}{Conditions}
\Crefname{construction}{Construction}{Constructions}
\Crefname{corollary}{Corollary}{Corollaries}
\Crefname{definition}{Definition}{Definitions}
\Crefname{example}{Example}{Examples}
\Crefname{exercise}{Exercise}{Exercises}
\Crefname{lemma}{Lemma}{Lemmas}
\crefname{observation}{Observation}{Observations}
\Crefname{proposition}{Proposition}{Propositions}
\Crefname{question}{Question}{Questions}
\Crefname{remark}{Remark}{Remarks}
\Crefname{theorem}{Theorem}{Theorems}
\newtheorem{theorem}{Theorem}[section]
\newtheorem{lemma}[theorem]{Lemma}
\newtheorem{corollary}[theorem]{Corollary}
\newtheorem{observation}[theorem]{Observation}
\newtheorem{remark}[theorem]{Remark}
\newtheorem{question}[theorem]{Question}
\newcommand{\tw}{\textrm{tw}}
\newcommand{\h}{\eta}
\newcommand{\G}{\mathcal{G}}
\newcommand{\N}{\mathbb{N}}
\renewcommand{\P}{\textsf{P}}
\newcommand{\NP}{\textsf{NP}}
\newcommand{\APX}{\textsf{APX}}
\newcommand{\mystar}{\text{\textasteriskcentered}}
\title{Treewidth versus clique number. I.\\Graph classes with a forbidden structure%
\thanks{An extended abstract of this work appeared in the proceedings of the 46th International Workshop on Graph-Theoretic Concepts in Computer Science (WG 2020)~\cite{DBLP:conf/wg/DallardMS20}.}~
\thanks{This research was funded in part by the Slovenian Research Agency (I0-0035, research program P1-0285, research projects J1-9110, N1-0102, and N1-0160, and a Young Researchers Grant).}}
\DeclareRobustCommand{\authorthing}{
\begin{center}
Cl\'{e}ment Dallard\textsuperscript{1} \hspace{20pt} Martin Milani\v{c}\textsuperscript{1} \hspace{20pt} Kenny \v{S}torgel\textsuperscript{2}\\
\medskip
\textsuperscript{1}~FAMNIT and IAM, University of Primorska, Koper, Slovenia\\
\textsuperscript{2}~Faculty of Information Studies, Novo mesto, Slovenia\\
\smallskip
\href{mailto:clement.dallard@famnit.upr.si}{clement.dallard@famnit.upr.si}\\
\href{mailto:milanic.martin@upr.si}{milanic.martin@upr.si}\\
\href{mailto:kennystorgel.research@gmail.com}{kennystorgel.research@gmail.com}

\end{center}
}
\author{\authorthing}
\date{}
\begin{document}
\maketitle

\begin{abstract}
Treewidth is an important graph invariant, relevant for both structural and algorithmic reasons. A necessary condition for a graph class to have bounded treewidth is the absence of large cliques. We study graph classes closed under taking induced subgraphs in which this condition is also sufficient, which we call $(\tw,\omega)$-bounded. Such graph classes are known to have useful algorithmic applications related to variants of the clique and $k$-coloring problems.
We consider six well-known graph containment relations: the minor,
topological minor,
subgraph,
induced minor,
induced topological minor, and
induced subgraph relations. For each of them, we give a complete characterization of the graphs $H$ for which the class of graphs excluding $H$ is $(\tw,\omega)$-bounded.
Our results yield an infinite family of $\chi$-bounded induced-minor-closed graph classes and imply that the class of $1$-perfectly orientable graphs is $(\tw,\omega)$-bounded, leading to linear-time algorithms for $k$-coloring $1$-perfectly orientable graphs for every fixed~$k$. This answers a question of Bre\v sar, Hartinger, Kos, and Milani{\v c} from 2018 and one of Beisegel, Chudnovsky, Gurvich, Milani{\v c}, and Servatius from 2019, respectively. We also reveal some further algorithmic implications of $(\tw,\omega)$-boundedness related to list $k$-coloring and clique problems. In addition, we propose a question about the complexity of the maximum weight independent set problem in  $(\tw,\omega)$-bounded graph classes and prove that the problem is polynomial-time solvable in every class of graphs excluding a fixed star as an induced minor.
\end{abstract}

\section{Introduction}

\subsection{Background and motivation}

The treewidth of a graph measures, roughly speaking, how similar the graph is to a tree. This invariant played a crucial role in the theory of graph minors due to Robertson and Seymour (see, e.g.,~\cite{MR2188176}), and many decision and optimization problems that are generally \NP-hard are solvable in linear time for graph classes of bounded treewidth~\cite{MR1154588,MR1417901,MR1042649}. A necessary condition for bounded treewidth is the absence of large cliques. When is this condition also sufficient?
We say that a graph class $\mathcal{G}$ is \emph{$(\tw,\omega)$-bounded} if there exists a function $f:\N\to \N$ such that for every graph $G\in \G$ and every induced subgraph $G'$ of $G$, we have $\tw(G')\le f(\omega(G'))$, where $\tw(G')$ and $\omega(G')$ denote the treewidth and the clique number of $G'$, respectively.
Such a function $f$ is called a \emph{$(\tw,\omega)$-binding function} for the class $\mathcal{G}$. Many graph classes studied in the literature are known to be $(\tw,\omega)$-bounded. For every positive integer $t$, the class of intersection graphs of connected subgraphs of graphs with treewidth at most~$t$ is $(\tw,\omega)$-bounded \cite{MR1090614,MR1642971}. This includes the classes of chordal graphs and circular-arc graphs.
Further examples include graph classes of bounded treewidth, classes of graphs in which all minimal separators are of bounded size~\cite{MR1852483}, and, as a consequence of Ramsey's theorem, classes of graphs of bounded independence number.

There are multiple motivations for the study of $(\tw,\omega)$-bounded graph classes, from both algorithmic and structural points of view. The \textsc{$k$-Clique} problem asks whether the input graph contains a clique of size $k$; the problem is known to be \textsf{W[1]}-hard (see, e.g.,~\cite{MR1323150}). Given a graph $G$ and a list of available colors from the set $\{1,\ldots, k\}$ for each vertex, the \textsc{List $k$-Coloring} problem asks whether $G$ can be properly vertex-colored by assigning to each vertex a color from its list. This is a generalization of the classical $k$-coloring problem and is thus \NP-hard for all $k\ge 3$ (see, e.g.,~\cite{MR2048545,MR3623382}).
Chaplick and Zeman gave fixed-parameter tractable algorithms for \textsc{$k$-Clique} and \textsc{List $k$-Coloring} in any $(\tw,\omega)$-bounded class of graphs with a computable binding function $f$~\cite{DBLP:journals/endm/ChaplickZ17}.
For a fixed value of $k$, their approach leads to a linear-time algorithm for the \textsc{$k$-Clique} and \textsc{List $k$-Coloring} problems in any such graph class.\footnote{In fact, they consider a more general setting where the inequality $\tw(G)\le f(\omega(G))$, for a computable function~$f$, is only required to hold for the input graph $G$ and not necessarily for all its induced subgraphs.\label{footnote non-hereditary}}
From the structural point of view, identifying new $(\tw,\omega)$-bounded graph classes directly addresses a recent question of Wei{\ss}auer~\cite{MR4010195} asking for which classes we can force large cliques by assuming large treewidth.  Wei{\ss}auer distinguishes graph parameters as being either \emph{global} or \emph{local} (see~\cite{MR4010195} for precise definitions). In this terminology, $(\tw,\omega)$-boundedness of a graph class is a sufficient condition for treewidth to become a local parameter.

\subsection{Our results}

The main aim of this paper is to further the knowledge of $(\tw,\omega)$-bounded graph classes.
We consider six well-known graph containment relations and for each of them give a complete characterization of the graphs $H$ for which the class of graphs excluding $H$ (with respect to the relation) is $(\tw,\omega)$-bounded.
These six relations are the minor relation, the topological minor relation, the subgraph relation,
and their induced variants, the induced minor relation, the induced topological minor relation, and the induced subgraph relation. (Precise definitions will be given in \cref{sec:prelim}.)
To explain our results, we need to introduce some notation. We denote by $\subseteq_{is}$ the induced subgraph relation.
By $K_{p,q}$ we denote the complete bipartite graph with parts of size $p$ and $q$; if $p = q$, then the complete bipartite graph is said to be \emph{balanced}. The \emph{claw} is the complete bipartite graph $K_{1,3}$. A \emph{subdivided claw} is the graph obtained from the claw by replacing each edge with a path of length at least one.
We denote by $\mathcal{S}$ the class of graphs in which every connected component is either a path or a subdivided claw.
For $q\ge 1$, we denote by $K_{2,q}^+$ the graph obtained from $K_{2,q}$ by adding an additional edge between the two vertices in the part of size $2$. Similarly, we denote by $K_q^{-}$ the graph obtained from the complete graph $K_q$ by removing an edge.
Note that the graph $K_4^-$ is sometimes called the \emph{diamond}. The graph $C_\ell$ is the cycle on $\ell$ vertices, and the \emph{$4$-wheel}, also denoted by $W_4$, is the graph obtained from the $C_4$ by adding a new vertex adjacent to all vertices of the $C_4$. A graph is \emph{subcubic} if every vertex is incident with at most three edges.

Our characterizations are summarized in~\cref{table-results} where each entry corresponds to one of the six containment relations and contains a description of necessary and sufficient conditions for a graph $H$ such that the class of graphs excluding $H$ with respect to the relation considered in the entry is $(\tw,\omega)$-bounded. When forbidding $H$ as a subgraph, a topological minor, or a minor, $(\tw,\omega)$-boundedness turns out to be equivalent to boundedness of the treewidth. However, this is not the case for the induced variants.

\begin{table}[htb]
	\renewcommand{\arraystretch}{1.25}
	\setlength{\tabcolsep}{7pt}
	\centering
    \begin{tabular}{|l||c|c|}
		\hline
		& General & Induced\\
		\hline\hline
		Subgraph & $H\in \mathcal{S}$ & $H\subseteq_{is} P_3$ or $H$ is edgeless\\
		\hline
		\multirow{2}{*}{Topological minor} & $H$ is subcubic
		& $H\subseteq_{is} C_3$, $H\subseteq_{is} C_4$,\\
		&  and planar & $H \cong K_4^-$, or $H$ is edgeless  \\
		\hline
		\multirow{2}{*}{Minor} & \multirow{2}{*}{$H$ is planar} &
		$H\subseteq_{is} W_4$, $H \subseteq_{is} K_5^-$,\\
		&   &  $H \subseteq_{is} K_{2,q}$, or $H \subseteq_{is} K_{2,q}^+$ for some $q\in \mathbb{N}$\\
		\hline
	\end{tabular}
	\caption{Summary of $(\tw,\omega)$-bounded graph classes excluding a fixed graph $H$ for six graph containment relations.}\label{table-results}
\end{table}

To the best of our knowledge, these six dichotomies represent the first set of results towards a systematic study of the problem of classifying $(\tw,\omega)$-bounded graph classes.

One of the results from the table, namely, the  $(\tw,\omega)$-boundedness of the class of $K_{2,3}$\=/induced-minor-free graphs, implies that the class of $1$-perfectly orientable graphs is $(\tw,\omega)$-bounded. This answers a question of \citeauthor{MR3853110} posed in~\cite{MR3853110}.
Combining this result with an algorithmic result of Chaplick and Zeman from~\cite{DBLP:journals/endm/ChaplickZ17} shows that for any fixed $k$, there exists a linear-time algorithm for the $k$-coloring problem in the class of $1$-perfectly orientable graphs. This answers a question raised by \citeauthor{MR3992956} in~\cite{MR3992956}. 
Moreover, our results for the induced minor relation lead to an infinite family of $\chi$-bounded graph classes that were not previously known to be $\chi$-bounded: the classes of $H$-induced-minor-free graphs whenever $H$ is
isomorphic to $W_4$, $K_5^-$, $K_{2,q}$ for $q\ge 3$, or $K_{2,q}^+$ for $q\ge 3$.

From the algorithmic point of view, we observe that for any fixed positive integer $k$, the approach of Chaplick and Zeman from~\cite{DBLP:journals/endm/ChaplickZ17} can be adapted to obtain a robust polynomial-time algorithm for \textsc{List $k$-Coloring} in any graph class with a computable $(\tw,\omega)$-binding function. We also show how to approximate the clique number to within a factor of $\textsf{opt}^{1-1/\mathcal{O}(1)}$ in graph classes with a polynomially bounded $(\tw,\omega)$-binding function, where $\textsf{opt}$ is the clique number of the input graph.

Our techniques combine the development and applications of structural properties of graphs in restricted classes, connections with Hadwiger number and with minimal separators, as well as applications of Ramsey's theorem and known results on treewidth and graph minors. Results given by \cref{table-results} are derived in \cref{sec:isitm,sec:im,sec:stmm}. The algorithmic results are presented in \cref{sec:algo}.
In~\cref{sec:lower-bounds}, we show that there exists no polynomial that is a $(\tw,\omega)$-binding function for all polynomially  $(\tw,\omega)$-bounded graph classes.
In~\cref{section MWIS}, we consider the complexity of the \textsc{Maximum Weight Independent Set} problem in $(\tw,\omega)$-bounded graph classes; in this respect, we prove that the problem is polynomial-time solvable in every class of graphs excluding a fixed star as an induced minor.
We conclude the paper in~\cref{sec:discussion} with several open questions and research directions for further investigations of $(\tw,\omega)$-bounded graph classes.

\subsection{Related work}

The concept of a $(\tw,\omega)$-bounded graph class is part of the following more general framework. An (integer) \emph{graph invariant} is a mapping from the class of all graphs to the set of nonnegative integers $\N$ that does not distinguish between isomorphic graphs. 
Given two graph invariants $\rho$ and $\sigma$ and a graph class $\mathcal{G}$, we say that $\mathcal{G}$ is \emph{$(\rho,\sigma)$-bounded} if there exists a \emph{$(\rho,\sigma)$-binding function} for $\G$, that is, a  function $f:\N\to \N$ such that for every graph $G\in \G$ and every induced subgraph $G'$ of $G$, we have $\rho(G')\le f(\sigma(G'))$.
Probably the most well-known and well-studied case of $(\rho,\sigma)$-bounded graph classes corresponds to the pair $(\rho,\sigma) = (\chi,\omega)$, where $\chi(G)$ denotes the chromatic number of $G$. Such graph classes are called simply \emph{$\chi$-bounded}. They were introduced by Gy\'arf\'as in the late 1980s~to generalize perfection~\cite{MR951359} and studied extensively in the literature (see~\cite{MR4174126} for a survey).
Note that every graph $G$ satisfies
$\omega(G)\le \chi(G)\le \tw(G)+1$ (see~\cref{thm:chi}), where the first inequality holds with equality for all induced subgraphs of $G$ if and only if $G$ is perfect, and both inequalities hold with equality for all induced subgraphs of $G$ if and only if $G$ is chordal (see~\cref{thm:chordal}).
Thus, similarly as $\chi$-boundedness generalizes perfection, $(\tw,\omega)$-boundedness generalizes chordality, and every $(\tw,\omega)$-bounded graph class is also $\chi$-bounded (but not vice versa).

In their book on graph coloring problems~\cite{MR1304254}, Jensen and Toft referred to \hbox{$(\beta,\chi)$-bounded} graph families, where $\beta$ denotes the coloring number of $G$, as \textit{color-bound}.
Gy\'arf\'as and Zaker studied $(\delta, \chi)$-bounded graph classes~\cite{MR2811077}, where $\delta$ denotes the minimum degree of the graph. \citeauthor{MR3215457} showed in~\cite{MR3215457} that classes of intersection graphs of arithmetic progressions with bounded jumps are $(\textrm{pw},\omega)$-bounded, where $\textrm{pw}$ denotes the pathwidth of the graph. Several other variants of $(\rho,\sigma)$-bounded graph classes were studied in the literature, though not to the same extent as the $\chi$-bounded ones~(see, e.g.,~\cite{MR2795546,MR1385380,MR3794363}).

\citeauthor{MR3729840}~\cite{MR3729840} asked whether $(\tw,\omega)$-boundedness can be generalized from the class of chordal graphs to the class of even-hole-free graphs. While the answer is affirmative in the case of planar even-hole-free graphs~\cite{MR2652000}, the question was recently resolved in the negative by Sintiari and Trotignon~\cite{SintiariTrotignonJGT2021}.

Dichotomy studies similar to ours exist for many other properties of graph classes, including
$(\delta, \chi)$-boundedness~\cite{MR2811077},
boundedness of the clique-width~\cite{MR3967291,MR4031697}, well-quasi-ordering~\cite{MR3906632,MR3853108,MR1185012}, and polynomial-time solvability of \textsc{Graph Homomorphism}~\cite{MR1047555}, \textsc{Graph Isomorphism}~\cite{MR3712298}, \textsc{Dominating Set}~\cite{MR3515011}, and various coloring and packing problems~\cite{MR3623382,MR1234387,MR2401129}.

\section{Preliminaries}\label{sec:prelim}

We now define the six graph containment relations studied in this paper.
If a graph $H$ can be obtained from a graph $G$ by only deleting vertices, then $H$ is an \emph{induced subgraph} of $G$, and we write $H \subseteq_{is} G$.
If $H$ is obtained from $G$ by deleting vertices and edges, then $H$ is a \emph{subgraph} of $G$, and we write $H \subseteq_s G$.
Note that if $H \subseteq_{is} G$, then $H \subseteq_s G$. A subdivision of a graph $H$ is a graph obtained from $H$ by a sequence of edge subdivisions.
The subdivision of an edge $uv$ of a graph is the operation that removes the edge $uv$ and adds two edges $uw$ and $wv$, where $w$ is a new vertex.
The graph $H$ is said to be a \emph{topological minor}  (or \emph{topological subgraph}) of a graph $G$ if $G$ contains a subdivision of $H$ as a subgraph, and we write $H \subseteq_{tm} G$. Similarly, $H$ is an \emph{induced topological minor} of $G$ if $G$ contains a subdivision of $H$ as an induced subgraph, and we write $H \subseteq_{itm} G$. Again, if $H \subseteq_{itm} G$, then $H \subseteq_{tm} G$. An edge contraction is the operation of deleting a pair of adjacent vertices and replacing them with a new vertex whose neighborhood is the union of the neighborhoods of the two original vertices. We say that $G$ contains $H$ as \emph{induced minor} if $H$ can be obtained from $G$ by a sequence of vertex deletions and edge contractions, and we write $H \subseteq_{im} G$.
Finally, if $H$ can be obtained from $G$ by a sequence of vertex deletions, edge deletions, and edge contractions, then $H$ is said to be a \emph{minor} of $G$, and we write $H \subseteq_{m} G$.
Here also, if $H \subseteq_{im} G$, then $H \subseteq_{m} G$.
Besides the already observed implications, one can notice that
\[
\begin{array}{cccccc}
    H \subseteq_{s} G   & \implies & H \subseteq_{tm} G     & \implies & H \subseteq_{m} G\phantom{\,.} &\text{and}\\
    H \subseteq_{is} G  & \implies & H \subseteq_{itm} G    & \implies & H \subseteq_{im} G\,.&
\end{array}
\]

If $G$ does not contain an induced subgraph isomorphic to $H$, then we say that $G$ is $H$-free. Analogously, we may also say that $G$ is $H$-subgraph-free, $H$-topological-minor-free, $H$-induced-topological-minor-free, $H$-minor-free, or $H$-induced-minor-free, respectively, for the other five relations.
This terminology and notation is naturally extended to the case of finitely many forbidden graphs with respect to any of the six graph containment relations.
For example, a graph $G$ is said to be $\{H_1,\ldots, H_p\}$-free if $G$ is $H_i$-free for all $i\in \{1,\ldots, p\}$.

It is well known that $G$ contains $H$ as a minor if and only if there exists a \emph{minor model} of $H$ in $G$, that is, a collection $(X_u: u\in V(H))$ of pairwise disjoint subsets of $V(G)$ called \emph{bags} such that each $X_u$ induces a connected subgraph of $G$ and for every two adjacent vertices $u,v\in V(H)$, there is an edge in $G$ between a vertex of $X_u$ and a vertex of $X_v$. Similarly, $G$ contains $H$ as an induced minor if and only if there exists an \emph{induced minor model} of $H$ in $G$, which is defined similarly as a minor model, except that for every two distinct vertices $u,v\in V(H)$, there is an edge in $G$ between a vertex of $X_u$ and a vertex of $X_v$ if and only if $uv \in E(H)$.

Given a set $S \subseteq V(G)$, we denote by $G-S$ the graph obtained from $G$ by removing all vertices in $S$ and by $G[S]$ the \emph{subgraph of $G$ induced by $S$}, that is, the graph $G-(V(G)\setminus S)$.
For $u \in V$, $N(u) = \{v \in V : uv \in E\}$ is the \emph{neighborhood} of $u$ and $N[u] = N(u) \cup \{u\}$ is the \emph{closed neighborhood} of $u$. The \emph{degree} of $u$ in $G$ is denoted by $d_G(u)$ and defined as the cardinality of its neighborhood.
A \emph{clique} in a graph $G$ is a set of pairwise adjacent vertices, and an \emph{independent set} is a set of pairwise nonadjacent vertices. The \emph{clique number} of a graph $G$, denoted by $\omega(G)$, is the maximum size of a clique in $G$.
The \emph{independence number} of a graph $G$, denoted by $\alpha(G)$, is the maximum size of an independent set in $G$.

A \emph{tree decomposition} of a graph $G$ is a pair $(T, \{X_t : t \in V(T)\})$, where $T$ is a tree and each $t \in V(T)$ is associated with a vertex subset $X_t \subseteq V(G)$ such that $\bigcup_{t \in V(T)} X_t = V$,
for each edge $uv \in E(G)$ there exists some $t\in V(T)$ such that $u,v \in X_t$, and for every $u \in V(G)$, the set $T_u = \{t \in V(T) : u \in X_t\}$ induces a connected subtree of $T$.
The \emph{width} of a tree decomposition equals $\max_{t \in V(T)} |X_t|-1$, and the \emph{treewidth} of a graph $G$, denoted by $\tw(G)$, is the minimum possible width of a tree decomposition of $G$. A graph class $\G$ is said to be \emph{of bounded treewidth} (or to \emph{have bounded treewidth}) if there exists a constant $c$ such that $\tw(G)\le c$ for all $G\in \G$; otherwise, $\G$ is of \emph{unbounded treewidth} (or \emph{has unbounded treewidth}). A \emph{hole} in a graph $G$ is an induced subgraph of $G$ isomorphic to a cycle of length at least four. A graph is said to be \emph{chordal} if it does not contain any hole.

Treewidth can be defined in many equivalent ways. One of the characterizations states that the treewidth of a graph $G$ equals
the minimum value of $\omega(G')-1$ such that $G$ is a subgraph of $G'$ and $G'$ is chordal (see, e.g.,~\cite{MR1647486}).
In particular, this characterization implies the following.

\begin{theorem}\label{thm:chordal}
Every graph $G$ satisfies $\tw(G) \ge \omega(G)-1$ with equality for all induced subgraphs if and only if $G$ is chordal.
\end{theorem}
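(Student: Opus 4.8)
The plan is to prove both directions of the biconditional in Theorem~\ref{thm:chordal} using the chordal-completion characterization of treewidth stated just above it, namely that $\tw(G)$ equals the minimum of $\omega(G')-1$ over all chordal graphs $G'$ having $G$ as a subgraph.

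First I would establish the inequality $\tw(G)\ge\omega(G)-1$ for every graph $G$. Let $G'$ be a chordal supergraph of $G$ realizing the treewidth, so $\tw(G)=\omega(G')-1$. Since $G$ is a subgraph of $G'$, every clique of $G$ is a clique of $G'$, hence $\omega(G)\le\omega(G')=\tw(G)+1$, which rearranges to the claimed bound. (Alternatively, one argues directly from a tree decomposition: a clique of size $\omega(G)$ must be contained in a single bag $X_t$, by the standard helly-type property of subtrees of a tree, forcing $|X_t|\ge\omega(G)$ and hence width at least $\omega(G)-1$.)

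Next I would prove the backward direction: if $G$ is chordal, then $\tw(G')=\omega(G')-1$ for every induced subgraph $G'$ of $G$. The key observation is that chordality is closed under taking induced subgraphs, so it suffices to show $\tw(G)=\omega(G)-1$ whenever $G$ itself is chordal. But if $G$ is chordal, then in the chordal-completion characterization we may simply take $G'=G$, giving $\tw(G)\le\omega(G)-1$; combined with the general inequality from the previous step this yields equality.

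The forward direction is the main obstacle, since it is a contrapositive-style argument exhibiting a witness hole. Suppose $G$ is not chordal; then $G$ contains an induced cycle $C$ of length $\ell\ge 4$. I would show that the induced subgraph $C$ itself violates the equality: we have $\omega(C)=2$, whereas $\tw(C)=2$ for every cycle $C_\ell$ with $\ell\ge 3$ (a cycle is not a forest, so its treewidth is at least $2$, and a fan-like tree decomposition achieves width $2$). Thus $\tw(C)=2>1=\omega(C)-1$, so the equality $\tw(G')=\omega(G')-1$ fails for the induced subgraph $G'=C$. This completes the contrapositive and hence the forward implication. The only point requiring care is correctly computing $\tw(C_\ell)=2$ and invoking that an induced cycle in $G$ is an induced subgraph of $G$, so a single counterexample among the induced subgraphs suffices to break the "for all induced subgraphs" condition.
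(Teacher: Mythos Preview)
Your proof is correct and follows exactly the route the paper intends: the paper does not give a detailed argument but simply states that the theorem is implied by the chordal-completion characterization of treewidth quoted just before it, and your three steps (the general inequality via a minimizing chordal supergraph, equality for chordal graphs by taking $G'=G$, and the forward direction via an induced hole $C$ with $\tw(C)=2>\omega(C)-1$) are precisely the details that make this implication explicit.
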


Since chordal graphs are perfect, their clique and chromatic numbers coincide, so an equivalent characterization is that the treewidth of a graph $G$ equals the minimum value of $\chi(G')-1$ such that $G$ is a subgraph of $G'$ and $G'$ is chordal. Fixing such a chordal graph $G'$ with $\chi(G') -1 = \tw(G)$ and using the fact that
chromatic number is monotone under subgraphs, the following
strengthening of the inequality given by \cref{thm:chordal} holds.

\begin{theorem}\label{thm:chi}
Every graph $G$ satisfies $\tw(G) \ge \chi(G)-1$.
\end{theorem}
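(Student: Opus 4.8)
The plan is to exploit the chordal-completion characterization of treewidth recalled just before the statement, which expresses $\tw(G)$ in terms of a chordal supergraph of $G$ on the same vertex set. First I would fix a chordal graph $G'$ such that $G$ is a subgraph of $G'$ and $\tw(G) = \omega(G')-1$; the existence of such a minimizer is exactly what the characterization underlying \cref{thm:chordal} provides.

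Next I would replace the clique number appearing in this identity by the chromatic number. Since chordal graphs are perfect, the clique and chromatic numbers of $G'$ coincide, so $\omega(G') = \chi(G')$ and hence $\tw(G) = \chi(G')-1$. It then remains only to compare $\chi(G')$ with $\chi(G)$. For this final step I would invoke the monotonicity of the chromatic number under the subgraph relation: since $G$ is a subgraph of $G'$, any proper coloring of $G'$ restricts to a proper coloring of $G$, whence $\chi(G) \le \chi(G')$. Chaining these together yields $\tw(G) = \chi(G')-1 \ge \chi(G)-1$, as claimed.

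I do not anticipate a genuine obstacle, as all the ingredients are either standard (perfection of chordal graphs, subgraph-monotonicity of $\chi$) or already recorded in the excerpt. The only point requiring care is to apply the chordal-completion characterization in its chromatic form rather than in its clique form, which is precisely what perfection of chordal graphs licenses; beyond that, the argument is a direct strengthening of \cref{thm:chordal} obtained by upgrading $\omega$ to $\chi$ on the chordal supergraph.
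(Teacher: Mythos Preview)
Your argument is correct and essentially identical to the paper's: the paper also fixes a chordal supergraph $G'$ attaining $\tw(G)=\omega(G')-1$, uses perfection of chordal graphs to rewrite this as $\chi(G')-1$, and then invokes subgraph-monotonicity of $\chi$ to conclude.
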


Let $\G$ be a $(\tw,\omega)$-bounded graph class with a binding function $f$.
From \cref{thm:chi}, we obtain that $\chi(G)-1 \leq \tw(G) \leq f(\omega(G))$ whenever $G$ is an induced subgraph of a graph in $\G$.
Hence, we obtain the following corollary.

\begin{corollary}\label{corollary:(tw omega)-B implies chi-B}
Every $(\tw,\omega)$-bounded graph class is $\chi$-bounded.
\end{corollary}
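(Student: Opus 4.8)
The plan is to read off the conclusion directly from the definitions by chaining two inequalities, so the work is essentially bookkeeping with the binding functions. Since $\G$ is $(\tw,\omega)$-bounded, fix a $(\tw,\omega)$-binding function $f\colon\N\to\N$, so that $\tw(G')\le f(\omega(G'))$ for every $G\in\G$ and every induced subgraph $G'$ of $G$. The goal is to exhibit a $\chi$-binding function $g\colon\N\to\N$ witnessing that the same class is $\chi$-bounded, i.e.\ a function with $\chi(G')\le g(\omega(G'))$ for all such $G'$.

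The key step is to insert \cref{thm:chi} between the chromatic number and the treewidth. For any induced subgraph $G'$ of a graph $G\in\G$, \cref{thm:chi} gives $\chi(G')-1\le \tw(G')$, and the defining property of $f$ gives $\tw(G')\le f(\omega(G'))$. Composing these two bounds yields $\chi(G')\le f(\omega(G'))+1$. Hence setting $g(n)=f(n)+1$ produces a function that binds $\chi$ in terms of $\omega$ across all induced subgraphs of members of $\G$, which is exactly the assertion that $\G$ is $\chi$-bounded.

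There is no real obstacle here, only one point worth noting: \cref{thm:chi} is stated for \emph{every} graph, so it applies verbatim to each induced subgraph $G'$, and the universal quantification over induced subgraphs is already part of the hypothesis of $(\tw,\omega)$-boundedness. Consequently no separate appeal to hereditary closure is needed, and the argument requires nothing beyond the elementary inequality $\chi\le \tw+1$ supplied by \cref{thm:chi} together with a single substitution of binding functions.
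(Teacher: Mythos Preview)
Your proof is correct and follows exactly the same approach as the paper: chain the inequality $\chi(G')-1\le \tw(G')$ from \cref{thm:chi} with the binding function inequality $\tw(G')\le f(\omega(G'))$ to obtain the $\chi$-binding function $g(n)=f(n)+1$.
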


The following observation is an immediate consequence of the definitions.

\begin{observation}\label{observation:chordal}
Let $G$ be a graph. Then, the following conditions are equivalent:
\begin{enumerate}
    \item $G$ is chordal.
    \item $G$ is $C_4$-induced-minor-free.
    \item $G$ is $C_4$-induced-topological-minor-free.
\end{enumerate}
\end{observation}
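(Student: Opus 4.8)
The plan is to establish the three-way equivalence through the cyclic chain of implications $(1) \Rightarrow (2) \Rightarrow (3) \Rightarrow (1)$, which is the most economical route. The implication $(2) \Rightarrow (3)$ is immediate from the containment hierarchy recorded above: since $H \subseteq_{itm} G$ implies $H \subseteq_{im} G$, any graph containing $C_4$ as an induced topological minor also contains it as an induced minor, so forbidding the latter forbids the former.

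For $(3) \Rightarrow (1)$ I would argue contrapositively. If $G$ is not chordal, then it contains a hole, i.e., an induced cycle $C_\ell$ with $\ell \ge 4$. The key observation is that for every $\ell \ge 4$ the cycle $C_\ell$ is a subdivision of $C_4$: distributing the $\ell - 4$ extra vertices among the four edges of $C_4$ produces exactly the cycles of length at least $4$ (and never $C_3$, since subdivision only inserts vertices). Hence an induced copy of $C_\ell$ is an induced subdivision of $C_4$, so $C_4 \subseteq_{itm} G$, meaning $G$ is not $C_4$-induced-topological-minor-free.

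The substantive step, and the main obstacle, is $(1) \Rightarrow (2)$: a chordal graph contains no $C_4$ as an induced minor. I would deduce this from the cleaner claim that the class of chordal graphs is closed under induced minors. Closure under vertex deletion is trivial, since a hole of $G - v$ is a hole of $G$, so it suffices to prove closure under a single edge contraction. Suppose $G$ is chordal and contracting an edge $ab$ into a vertex $w$ creates a hole $H'$ in the resulting graph $G'$. If $w \notin H'$, then $H'$ is already a hole of the chordal graph $G - \{a,b\}$, a contradiction; so we may write $H' = w\,c_1 c_2 \cdots c_k\,w$ with $k \ge 3$. Then $c_1 \cdots c_k$ is an induced path of $G$; its endpoints $c_1, c_k$ are adjacent to $a$ or to $b$ (being neighbors of $w$), whereas the interior vertices $c_2, \dots, c_{k-1}$ are adjacent to neither. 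A short case analysis on which of $a, b$ is seen by $c_1$ and $c_k$ then yields a hole of $G$: if some single vertex $x \in \{a,b\}$ is adjacent to both $c_1$ and $c_k$, then $x\,c_1 \cdots c_k\,x$ is an induced cycle of length at least $4$; otherwise $c_1$ and $c_k$ see different endpoints of the edge $ab$, and $a\,c_1 \cdots c_k\,b\,a$ is an induced cycle of length at least $5$. In either situation the chordality of $G$ is contradicted, so $G'$ is chordal as well.

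Consequently every induced minor of a chordal graph is chordal; since $C_4$ is not chordal, it cannot be an induced minor of the chordal graph $G$, which is precisely $(1) \Rightarrow (2)$. I expect the only genuine work to lie in the adjacency bookkeeping of the final step, namely verifying that the two constructed cycles are chordless; once that is in place the remaining implications are essentially definitional.
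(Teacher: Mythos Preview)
Your proof is correct. The paper itself does not give a proof; it simply records the observation as ``an immediate consequence of the definitions.'' Your cyclic chain $(1)\Rightarrow(2)\Rightarrow(3)\Rightarrow(1)$ is the natural route, and the only step requiring real work is $(1)\Rightarrow(2)$, for which you supply the standard argument that chordal graphs are closed under edge contraction. The case analysis there is sound: in Case~2, the negation of Case~1 indeed forces $c_1$ to be adjacent to exactly one of $a,b$ and $c_k$ to the other, so the cycle $a\,c_1\cdots c_k\,b\,a$ is chordless as you claim. The paper presumably regards this closure property as well known and so omits the verification you carry out.
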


Some of our proofs will make use of the following classical result due to Ramsey~\cite{MR1576401}.

\begin{theorem}[Ramsey's theorem]
For every two positive integers $k$ and $\ell$, there exists a least positive integer $R(k,\ell)$ such that every graph with at least $R(k,\ell)$ vertices contains either a clique of size $k$ or an independent set of size $\ell$.
\end{theorem}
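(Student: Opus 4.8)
The plan is to prove the existence of $R(k,\ell)$ by establishing the stronger recursive statement that $R(k,\ell)$ exists and satisfies $R(k,\ell) \le R(k-1,\ell) + R(k,\ell-1)$ for all $k,\ell \ge 2$, arguing by induction on $k+\ell$. The base cases are immediate: a single vertex forms both a clique of size $1$ and an independent set of size $1$, so $R(1,\ell) = R(k,1) = 1$ for all $k,\ell \ge 1$. This anchors the double induction. Note also that if every graph on exactly $m$ vertices contains a clique of size $k$ or an independent set of size $\ell$, then so does every graph on more than $m$ vertices, simply by restricting to an induced subgraph on $m$ vertices; hence the set of valid thresholds is upward closed, and once we exhibit one, a least such value $R(k,\ell)$ exists and the ``at least $R(k,\ell)$ vertices'' formulation follows.

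For the inductive step I would fix a graph $G$ on $n = R(k-1,\ell) + R(k,\ell-1)$ vertices and select an arbitrary vertex $v$. Partitioning $V(G) \setminus \{v\}$ into the set $A$ of neighbors of $v$ and the set $B$ of non-neighbors of $v$, we have $|A| + |B| = n - 1$. A pigeonhole argument then forces $|A| \ge R(k-1,\ell)$ or $|B| \ge R(k,\ell-1)$: otherwise $|A| + |B| \le (R(k-1,\ell)-1) + (R(k,\ell-1)-1) = n - 2$, contradicting $|A|+|B| = n-1$.

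In the first case, the inductive hypothesis applied to $G[A]$ yields either an independent set of size $\ell$, which completes the argument directly, or a clique of size $k-1$; in the latter situation, since $v$ is adjacent to every vertex of $A$, appending $v$ produces a clique of size $k$. The second case is symmetric: $G[B]$ contains either a clique of size $k$ or an independent set of size $\ell-1$, and in the latter case $v$, being non-adjacent to all of $B$, extends it to an independent set of size $\ell$. In every case $G$ contains a clique of size $k$ or an independent set of size $\ell$, which establishes the recursive bound and hence the existence of $R(k,\ell)$.

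The argument is entirely self-contained, relying only on the partition by adjacency to $v$ and a pigeonhole count, so there is no serious obstacle. The main point to get right is the bookkeeping: stating the pigeonhole split with the correct strict inequality and verifying that the two cases are genuinely exhaustive rather than merely plausible. The only mild subtlety is that we prove existence together with an explicit (non-optimal) upper bound, which is precisely what makes the induction close.
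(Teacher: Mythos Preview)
Your proof is correct and is precisely the standard argument the paper alludes to: the paper does not give a full proof but simply remarks that ``the standard proof of Ramsey's theorem is based on the inequality $R(k,\ell) \le R(k-1,\ell) + R(k,\ell-1)$ for all $k,\ell \ge 2$,'' which is exactly the recursive bound you establish via the neighbor/non-neighbor partition and pigeonhole.
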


The standard proof of Ramsey's theorem is based on the inequality
$R(k,\ell) \le  R(k-1, \ell) + R(k, \ell-1)$ for all $k,\ell\ge 2$, which implies that $R(k,\ell)\le \binom{k+\ell-2}{k-1}$ for all positive integers $k$ and $\ell$.

Using Ramsey's theorem, we can already derive the following.

\begin{lemma}\label{H is edgeless}
Let $H$ be an edgeless graph. Then the class of $H$-free graphs is $(\tw,\omega)$-bounded with a binding function  $f(k) = R(k+1,|V(H)|)-2$, which is bounded by a polynomial in $k$ of degree $|V(H)|-1$.
\end{lemma}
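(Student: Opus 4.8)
The plan is to observe that, because $H$ is edgeless, being $H$-free is equivalent to an upper bound on the independence number, and then to combine this with Ramsey's theorem and the trivial upper bound of treewidth by the number of vertices.

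First I would set $n = |V(H)|$. Since $H$ has no edges, it is isomorphic to an independent set on $n$ vertices, so a graph $G$ is $H$-free precisely when $G$ contains no independent set of size $n$, that is, $\alpha(G) \le n-1$. This condition is hereditary under taking induced subgraphs, so if $G$ is $H$-free, then every induced subgraph $G'$ of $G$ also satisfies $\alpha(G') \le n-1$; in particular it too is $H$-free.

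Next I would fix an induced subgraph $G'$ of an $H$-free graph and write $k = \omega(G')$. Then $G'$ has no clique of size $k+1$ and, by the previous step, no independent set of size $n$. By Ramsey's theorem, every graph on at least $R(k+1,n)$ vertices contains a clique of size $k+1$ or an independent set of size $n$; since $G'$ contains neither, it must have $|V(G')| \le R(k+1,n)-1$. Finally, every graph satisfies $\tw(G') \le |V(G')|-1$, as witnessed by the tree decomposition consisting of a single bag equal to $V(G')$. Combining the two inequalities yields $\tw(G') \le R(k+1,n)-2 = f(\omega(G'))$, which establishes $(\tw,\omega)$-boundedness with the claimed binding function.

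For the polynomial bound I would invoke the standard estimate $R(k+1,n) \le \binom{k+n-1}{k} = \binom{k+n-1}{n-1}$ recorded just after the statement of Ramsey's theorem. The right-hand side equals $\frac{(k+1)(k+2)\cdots(k+n-1)}{(n-1)!}$, a polynomial in $k$ of degree $n-1 = |V(H)|-1$, so $f(k) = R(k+1,n)-2$ is bounded by a polynomial in $k$ of degree $|V(H)|-1$. The argument presents no genuine obstacle; the only points that need care are applying the Ramsey inequality in the correct direction (avoiding both a large clique and a large independent set forces few vertices, hence small treewidth) and the degree computation for the polynomial estimate.
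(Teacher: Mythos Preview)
Your proof is correct and follows essentially the same approach as the paper: bound the number of vertices via Ramsey's theorem, then use $\tw(G')\le |V(G')|-1$. You are slightly more explicit than the paper in spelling out the hereditary step and in deriving the degree of the polynomial from the binomial estimate $R(k+1,n)\le \binom{k+n-1}{n-1}$, but the argument is the same.
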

\begin{proof}
Let $k \in \mathbb{N}$ and let $G$ be an $H$-free graph such that $\omega(G) = k$.
Since $H$ is edgeless, Ramsey's theorem implies that the number of vertices in $G$ is strictly smaller than $R(k+1,|V(H)|)$. In particular, the treewidth of $G$ is at most $|V(G)|-1\le R(k+1,|V(H)|)-2$.
\end{proof}

A graph class that is not $(\tw,\omega)$-bounded is said to be \emph{$(\tw,\omega)$-unbounded}.
Some specific $(\tw,\omega)$-unbounded graph classes, which will play a crucial role in our proofs, are discussed in \cref{lemma:H is not any H}.
The \emph{line graph} of a graph $G$, denoted by $L(G)$, is the graph with vertex set $E(G)$ where two vertices are adjacent if and only if the corresponding edges intersect.
For the definition of an \emph{elementary wall}, we refer to~\cite{MR3451044}.
For a nonnegative integer $q$, we say that a graph is a \emph{$q$-subdivided-wall} if it can be obtained from an elementary wall by subdividing each edge $q$ times.
See \cref{fig:walls and their line graphs}
for an illustration of an elementary wall, a $1$-subdivided wall, and the line graph of a $1$-subdivided wall.

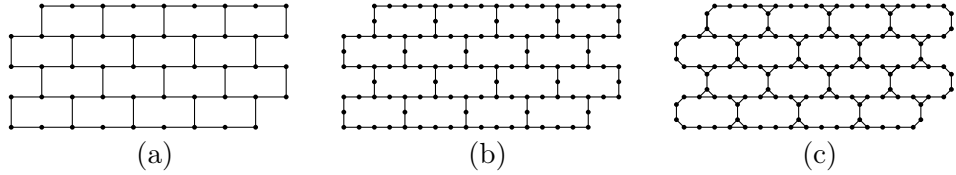
\begin{figure}[htbp]
\centering
\tikzstyle{ipe stylesheet} = [
  ipe import,
  even odd rule,
  line join=round,
  line cap=butt,
  ipe pen normal/.style={line width=0.4},
  ipe pen heavier/.style={line width=0.8},
  ipe pen fat/.style={line width=1.2},
  ipe pen ultrafat/.style={line width=2},
  ipe pen normal,
  ipe mark normal/.style={ipe mark scale=3},
  ipe mark large/.style={ipe mark scale=5},
  ipe mark small/.style={ipe mark scale=1.5},
  ipe mark tiny/.style={ipe mark scale=1.1},
  ipe mark normal,
  /pgf/arrow keys/.cd,
  ipe arrow normal/.style={scale=7},
  ipe arrow large/.style={scale=10},
  ipe arrow small/.style={scale=5},
  ipe arrow tiny/.style={scale=3},
  ipe arrow normal,
  /tikz/.cd,
  ipe arrows,
  <->/.tip = ipe normal,
  ipe dash normal/.style={dash pattern=},
  ipe dash dashed/.style={dash pattern=on 4bp off 4bp},
  ipe dash dotted/.style={dash pattern=on 1bp off 3bp},
  ipe dash dash dotted/.style={dash pattern=on 4bp off 2bp on 1bp off 2bp},
  ipe dash dash dot dotted/.style={dash pattern=on 4bp off 2bp on 1bp off 2bp on 1bp off 2bp},
  ipe dash normal,
  ipe node/.append style={font=\normalsize},
  ipe stretch normal/.style={ipe node stretch=1},
  ipe stretch normal,
  ipe opacity opaque/.style={opacity=1},
  ipe opacity opaque,
]
\begin{tikzpicture}[ipe stylesheet,scale=0.62]
  \node[ipe node]
     at (91.193, 682.14) {(a)};
  \draw[shift={(16, 704)}, xscale=0.7655, yscale=0.5741]
    (0, 0) rectangle (48, 32);
  \draw[shift={(52.743, 704)}, xscale=0.7655, yscale=0.5741]
    (0, 0)
     -- (48, 0)
     -- (48, 32)
     -- (0, 32);
  \draw[shift={(89.487, 704)}, xscale=0.7655, yscale=0.5741]
    (0, 0)
     -- (48, 0)
     -- (48, 32)
     -- (0, 32);
  \draw[shift={(126.23, 704)}, xscale=0.7655, yscale=0.5741]
    (0, 0)
     -- (48, 0)
     -- (48, 32)
     -- (0, 32);
  \draw[shift={(34.372, 722.371)}, xscale=0.7655, yscale=0.5741]
    (0, 0)
     -- (0, 32)
     -- (48, 32)
     -- (48, 0);
  \draw[shift={(71.115, 740.743)}, xscale=0.7655, yscale=0.5741]
    (0, 0)
     -- (48, 0)
     -- (48, -32);
  \draw[shift={(107.858, 740.743)}, xscale=0.7655, yscale=0.5741]
    (0, 0)
     -- (48, 0)
     -- (48, -32);
  \draw[shift={(144.601, 740.743)}, xscale=0.7655, yscale=0.5741]
    (0, 0)
     -- (48, 0)
     -- (48, -32)
     -- (24, -32);
  \draw[shift={(34.372, 759.115)}, xscale=0.7655, yscale=0.5741]
    (0, 0)
     -- (0, 32)
     -- (48, 32)
     -- (48, 0);
  \draw[shift={(71.115, 777.487)}, xscale=0.7655, yscale=0.5741]
    (0, 0)
     -- (48, 0)
     -- (48, -32);
  \draw[shift={(107.858, 777.487)}, xscale=0.7655, yscale=0.5741]
    (0, 0)
     -- (48, 0)
     -- (48, -32);
  \draw[shift={(144.601, 777.487)}, xscale=0.7655, yscale=0.5741]
    (0, 0)
     -- (48, 0)
     -- (48, -32)
     -- (24, -32);
  \draw[shift={(34.372, 740.743)}, xscale=0.7655, yscale=0.5741]
    (0, 0)
     -- (-24, 0)
     -- (-24, 32)
     -- (168, 32)
     -- (168, 0);
  \draw[shift={(52.743, 740.743)}, xscale=0.7655, yscale=0.5741]
    (0, 0)
     -- (0, 32);
  \draw[shift={(89.487, 740.743)}, xscale=0.7655, yscale=0.5741]
    (0, 0)
     -- (0, 32);
  \draw[shift={(126.23, 740.743)}, xscale=0.7655, yscale=0.5741]
    (0, 0)
     -- (0, 32);
  \pic[ipe mark scale=1.5]
     at (16, 759.1147) {ipe disk};
  \pic[ipe mark scale=1.5]
     at (16, 740.7431) {ipe disk};
  \pic[ipe mark scale=1.5]
     at (34.3716, 740.7431) {ipe disk};
  \pic[ipe mark scale=1.5]
     at (34.3716, 722.3714) {ipe disk};
  \pic[ipe mark scale=1.5]
     at (16, 722.3714) {ipe disk};
  \pic[ipe mark scale=1.5]
     at (16, 703.9998) {ipe disk};
  \pic[ipe mark scale=1.5]
     at (52.7433, 703.9998) {ipe disk};
  \pic[ipe mark scale=1.5]
     at (52.7433, 722.3714) {ipe disk};
  \pic[ipe mark scale=1.5]
     at (71.1149, 722.3714) {ipe disk};
  \pic[ipe mark scale=1.5]
     at (71.1149, 740.7431) {ipe disk};
  \pic[ipe mark scale=1.5]
     at (34.3716, 703.9998) {ipe disk};
  \pic[ipe mark scale=1.5]
     at (71.1149, 703.9998) {ipe disk};
  \pic[ipe mark scale=1.5]
     at (89.4866, 703.9998) {ipe disk};
  \pic[ipe mark scale=1.5]
     at (89.4866, 722.3714) {ipe disk};
  \pic[ipe mark scale=1.5]
     at (107.8582, 722.3714) {ipe disk};
  \pic[ipe mark scale=1.5]
     at (126.2298, 722.3714) {ipe disk};
  \pic[ipe mark scale=1.5]
     at (126.2298, 703.9998) {ipe disk};
  \pic[ipe mark scale=1.5]
     at (107.8582, 703.9998) {ipe disk};
  \pic[ipe mark scale=1.5]
     at (144.6015, 703.9998) {ipe disk};
  \pic[ipe mark scale=1.5]
     at (144.6015, 722.3714) {ipe disk};
  \pic[ipe mark scale=1.5]
     at (162.9731, 722.3714) {ipe disk};
  \pic[ipe mark scale=1.5]
     at (162.9731, 703.9998) {ipe disk};
  \pic[ipe mark scale=1.5]
     at (181.3448, 722.3714) {ipe disk};
  \pic[ipe mark scale=1.5]
     at (52.7433, 740.7431) {ipe disk};
  \pic[ipe mark scale=1.5]
     at (52.7433, 759.1147) {ipe disk};
  \pic[ipe mark scale=1.5]
     at (34.3716, 759.1147) {ipe disk};
  \pic[ipe mark scale=1.5]
     at (34.3716, 777.4864) {ipe disk};
  \pic[ipe mark scale=1.5]
     at (52.7433, 777.4864) {ipe disk};
  \pic[ipe mark scale=1.5]
     at (71.1149, 777.4864) {ipe disk};
  \pic[ipe mark scale=1.5]
     at (71.1149, 759.1147) {ipe disk};
  \pic[ipe mark scale=1.5]
     at (89.4866, 759.1147) {ipe disk};
  \pic[ipe mark scale=1.5]
     at (89.4866, 777.4864) {ipe disk};
  \pic[ipe mark scale=1.5]
     at (107.8582, 777.4864) {ipe disk};
  \pic[ipe mark scale=1.5]
     at (107.8582, 759.1147) {ipe disk};
  \pic[ipe mark scale=1.5]
     at (126.2298, 759.1147) {ipe disk};
  \pic[ipe mark scale=1.5]
     at (107.8582, 740.7431) {ipe disk};
  \pic[ipe mark scale=1.5]
     at (89.4866, 740.7431) {ipe disk};
  \pic[ipe mark scale=1.5]
     at (126.2298, 740.7431) {ipe disk};
  \pic[ipe mark scale=1.5]
     at (144.6015, 759.1147) {ipe disk};
  \pic[ipe mark scale=1.5]
     at (144.6015, 777.4864) {ipe disk};
  \pic[ipe mark scale=1.5]
     at (162.9731, 759.1147) {ipe disk};
  \pic[ipe mark scale=1.5]
     at (181.3448, 777.4864) {ipe disk};
  \pic[ipe mark scale=1.5]
     at (181.3448, 759.1147) {ipe disk};
  \pic[ipe mark scale=1.5]
     at (162.9731, 740.7431) {ipe disk};
  \pic[ipe mark scale=1.5]
     at (144.6015, 740.7431) {ipe disk};
  \pic[ipe mark scale=1.5]
     at (181.3448, 740.7431) {ipe disk};
  \draw[shift={(216, 704)}, xscale=0.7655, yscale=0.5741]
    (0, 0) rectangle (48, 32);
  \draw[shift={(252.743, 704)}, xscale=0.7655, yscale=0.5741]
    (0, 0)
     -- (48, 0)
     -- (48, 32)
     -- (0, 32);
  \draw[shift={(289.487, 704)}, xscale=0.7655, yscale=0.5741]
    (0, 0)
     -- (48, 0)
     -- (48, 32)
     -- (0, 32);
  \draw[shift={(326.23, 704)}, xscale=0.7655, yscale=0.5741]
    (0, 0)
     -- (48, 0)
     -- (48, 32)
     -- (0, 32);
  \draw[shift={(234.371, 722.371)}, xscale=0.7655, yscale=0.5741]
    (0, 0)
     -- (0, 32)
     -- (48, 32)
     -- (48, 0);
  \draw[shift={(271.115, 740.743)}, xscale=0.7655, yscale=0.5741]
    (0, 0)
     -- (48, 0)
     -- (48, -32);
  \draw[shift={(307.859, 740.743)}, xscale=0.7655, yscale=0.5741]
    (0, 0)
     -- (48, 0)
     -- (48, -32);
  \draw[shift={(344.601, 740.743)}, xscale=0.7655, yscale=0.5741]
    (0, 0)
     -- (48, 0)
     -- (48, -32)
     -- (24, -32);
  \draw[shift={(234.371, 759.115)}, xscale=0.7655, yscale=0.5741]
    (0, 0)
     -- (0, 32)
     -- (48, 32)
     -- (48, 0);
  \draw[shift={(271.115, 777.487)}, xscale=0.7655, yscale=0.5741]
    (0, 0)
     -- (48, 0)
     -- (48, -32);
  \draw[shift={(307.859, 777.487)}, xscale=0.7655, yscale=0.5741]
    (0, 0)
     -- (48, 0)
     -- (48, -32);
  \draw[shift={(344.601, 777.487)}, xscale=0.7655, yscale=0.5741]
    (0, 0)
     -- (48, 0)
     -- (48, -32)
     -- (24, -32);
  \draw[shift={(234.371, 740.743)}, xscale=0.7655, yscale=0.5741]
    (0, 0)
     -- (-24, 0)
     -- (-24, 32)
     -- (168, 32)
     -- (168, 0);
  \draw[shift={(252.743, 740.743)}, xscale=0.7655, yscale=0.5741]
    (0, 0)
     -- (0, 32);
  \draw[shift={(289.486, 740.743)}, xscale=0.7655, yscale=0.5741]
    (0, 0)
     -- (0, 32);
  \draw[shift={(326.23, 740.743)}, xscale=0.7655, yscale=0.5741]
    (0, 0)
     -- (0, 32);
  \pic[ipe mark scale=1.5]
     at (215.9996, 759.1147) {ipe disk};
  \pic[ipe mark scale=1.5]
     at (215.9996, 740.7431) {ipe disk};
  \pic[ipe mark scale=1.5]
     at (234.3713, 740.7431) {ipe disk};
  \pic[ipe mark scale=1.5]
     at (234.3713, 722.3714) {ipe disk};
  \pic[ipe mark scale=1.5]
     at (215.9996, 722.3714) {ipe disk};
  \pic[ipe mark scale=1.5]
     at (252.743, 703.9998) {ipe disk};
  \pic[ipe mark scale=1.5]
     at (252.743, 722.3714) {ipe disk};
  \pic[ipe mark scale=1.5]
     at (271.1146, 722.3714) {ipe disk};
  \pic[ipe mark scale=1.5]
     at (271.1146, 740.7431) {ipe disk};
  \pic[ipe mark scale=1.5]
     at (234.3713, 703.9998) {ipe disk};
  \pic[ipe mark scale=1.5]
     at (271.1146, 703.9998) {ipe disk};
  \pic[ipe mark scale=1.5]
     at (289.4863, 703.9998) {ipe disk};
  \pic[ipe mark scale=1.5]
     at (289.4863, 722.3714) {ipe disk};
  \pic[ipe mark scale=1.5]
     at (307.858, 722.3714) {ipe disk};
  \pic[ipe mark scale=1.5]
     at (326.2297, 722.3714) {ipe disk};
  \pic[ipe mark scale=1.5]
     at (326.2297, 703.9998) {ipe disk};
  \pic[ipe mark scale=1.5]
     at (307.858, 703.9998) {ipe disk};
  \pic[ipe mark scale=1.5]
     at (344.6014, 703.9998) {ipe disk};
  \pic[ipe mark scale=1.5]
     at (344.6014, 722.3714) {ipe disk};
  \pic[ipe mark scale=1.5]
     at (362.9731, 722.3714) {ipe disk};
  \pic[ipe mark scale=1.5]
     at (362.9731, 703.9998) {ipe disk};
  \pic[ipe mark scale=1.5]
     at (381.3448, 722.3714) {ipe disk};
  \pic[ipe mark scale=1.5]
     at (252.743, 740.7431) {ipe disk};
  \pic[ipe mark scale=1.5]
     at (252.743, 759.1147) {ipe disk};
  \pic[ipe mark scale=1.5]
     at (234.3713, 759.1147) {ipe disk};
  \pic[ipe mark scale=1.5]
     at (234.3713, 777.4864) {ipe disk};
  \pic[ipe mark scale=1.5]
     at (252.743, 777.4864) {ipe disk};
  \pic[ipe mark scale=1.5]
     at (271.1146, 777.4864) {ipe disk};
  \pic[ipe mark scale=1.5]
     at (271.1146, 759.1147) {ipe disk};
  \pic[ipe mark scale=1.5]
     at (289.4863, 759.1147) {ipe disk};
  \pic[ipe mark scale=1.5]
     at (289.4863, 777.4864) {ipe disk};
  \pic[ipe mark scale=1.5]
     at (307.858, 777.4864) {ipe disk};
  \pic[ipe mark scale=1.5]
     at (307.858, 759.1147) {ipe disk};
  \pic[ipe mark scale=1.5]
     at (326.2297, 759.1147) {ipe disk};
  \pic[ipe mark scale=1.5]
     at (307.858, 740.7431) {ipe disk};
  \pic[ipe mark scale=1.5]
     at (289.4863, 740.7431) {ipe disk};
  \pic[ipe mark scale=1.5]
     at (289.4863, 749.9289) {ipe disk};
  \pic[ipe mark scale=1.5]
     at (326.2297, 740.7431) {ipe disk};
  \pic[ipe mark scale=1.5]
     at (344.6014, 759.1147) {ipe disk};
  \pic[ipe mark scale=1.5]
     at (344.6014, 777.4864) {ipe disk};
  \pic[ipe mark scale=1.5]
     at (362.9731, 759.1147) {ipe disk};
  \pic[ipe mark scale=1.5]
     at (381.3448, 759.1147) {ipe disk};
  \pic[ipe mark scale=1.5]
     at (362.9731, 740.7431) {ipe disk};
  \pic[ipe mark scale=1.5]
     at (344.6014, 740.7431) {ipe disk};
  \pic[ipe mark scale=1.5]
     at (381.3448, 740.7431) {ipe disk};
  \pic[ipe mark scale=1.5]
     at (225.1857, 759.1147) {ipe disk};
  \pic[ipe mark scale=1.5]
     at (215.9999, 749.9289) {ipe disk};
  \pic[ipe mark scale=1.5]
     at (225.1857, 740.7431) {ipe disk};
  \pic[ipe mark scale=1.5]
     at (243.5574, 740.7431) {ipe disk};
  \pic[ipe mark scale=1.5]
     at (243.5574, 759.1147) {ipe disk};
  \pic[ipe mark scale=1.5]
     at (234.3716, 768.3006) {ipe disk};
  \pic[ipe mark scale=1.5]
     at (243.5574, 777.4864) {ipe disk};
  \pic[ipe mark scale=1.5]
     at (261.9291, 777.4864) {ipe disk};
  \pic[ipe mark scale=1.5]
     at (261.9291, 759.1147) {ipe disk};
  \pic[ipe mark scale=1.5]
     at (271.115, 768.3006) {ipe disk};
  \pic[ipe mark scale=1.5]
     at (280.3008, 777.4864) {ipe disk};
  \pic[ipe mark scale=1.5]
     at (298.6726, 777.4864) {ipe disk};
  \pic[ipe mark scale=1.5]
     at (317.0443, 777.4864) {ipe disk};
  \pic[ipe mark scale=1.5]
     at (326.2301, 777.4864) {ipe disk};
  \pic[ipe mark scale=1.5]
     at (335.416, 777.4864) {ipe disk};
  \pic[ipe mark scale=1.5]
     at (353.7877, 777.4864) {ipe disk};
  \pic[ipe mark scale=1.5]
     at (362.9736, 777.4864) {ipe disk};
  \pic[ipe mark scale=1.5]
     at (372.1594, 777.4864) {ipe disk};
  \pic[ipe mark scale=1.5]
     at (381.3453, 768.3006) {ipe disk};
  \pic[ipe mark scale=1.5]
     at (344.6018, 768.3006) {ipe disk};
  \pic[ipe mark scale=1.5]
     at (307.8584, 768.3006) {ipe disk};
  \pic[ipe mark scale=1.5]
     at (280.3008, 759.1147) {ipe disk};
  \pic[ipe mark scale=1.5]
     at (298.6726, 759.1147) {ipe disk};
  \pic[ipe mark scale=1.5]
     at (317.0443, 759.1147) {ipe disk};
  \pic[ipe mark scale=1.5]
     at (335.416, 759.1147) {ipe disk};
  \pic[ipe mark scale=1.5]
     at (353.7877, 759.1147) {ipe disk};
  \pic[ipe mark scale=1.5]
     at (362.9736, 749.9289) {ipe disk};
  \pic[ipe mark scale=1.5]
     at (353.7877, 740.7431) {ipe disk};
  \pic[ipe mark scale=1.5]
     at (335.416, 740.7431) {ipe disk};
  \pic[ipe mark scale=1.5]
     at (326.2301, 749.9289) {ipe disk};
  \pic[ipe mark scale=1.5]
     at (298.6726, 740.7431) {ipe disk};
  \pic[ipe mark scale=1.5]
     at (280.3008, 740.7431) {ipe disk};
  \pic[ipe mark scale=1.5]
     at (261.9291, 740.7431) {ipe disk};
  \pic[ipe mark scale=1.5]
     at (252.7433, 749.9289) {ipe disk};
  \pic[ipe mark scale=1.5]
     at (234.3716, 731.5573) {ipe disk};
  \pic[ipe mark scale=1.5]
     at (225.1857, 722.3714) {ipe disk};
  \pic[ipe mark scale=1.5]
     at (215.9999, 713.1856) {ipe disk};
  \pic[ipe mark scale=1.5]
     at (225.1857, 703.9998) {ipe disk};
  \pic[ipe mark scale=1.5]
     at (243.5574, 703.9998) {ipe disk};
  \pic[ipe mark scale=1.5]
     at (252.7433, 713.1856) {ipe disk};
  \pic[ipe mark scale=1.5]
     at (243.5574, 722.3714) {ipe disk};
  \pic[ipe mark scale=1.5]
     at (261.9291, 722.3714) {ipe disk};
  \pic[ipe mark scale=1.5]
     at (271.115, 731.5573) {ipe disk};
  \pic[ipe mark scale=1.5]
     at (280.3008, 722.3714) {ipe disk};
  \pic[ipe mark scale=1.5]
     at (289.4867, 713.1856) {ipe disk};
  \pic[ipe mark scale=1.5]
     at (280.3008, 703.9998) {ipe disk};
  \pic[ipe mark scale=1.5]
     at (261.9291, 703.9998) {ipe disk};
  \pic[ipe mark scale=1.5]
     at (298.6726, 722.3714) {ipe disk};
  \pic[ipe mark scale=1.5]
     at (298.6726, 703.9998) {ipe disk};
  \pic[ipe mark scale=1.5]
     at (317.0443, 703.9998) {ipe disk};
  \pic[ipe mark scale=1.5]
     at (317.0443, 722.3714) {ipe disk};
  \pic[ipe mark scale=1.5]
     at (326.2301, 713.1856) {ipe disk};
  \pic[ipe mark scale=1.5]
     at (335.416, 703.9998) {ipe disk};
  \pic[ipe mark scale=1.5]
     at (353.7877, 703.9998) {ipe disk};
  \pic[ipe mark scale=1.5]
     at (362.9736, 713.1856) {ipe disk};
  \pic[ipe mark scale=1.5]
     at (353.7877, 722.3714) {ipe disk};
  \pic[ipe mark scale=1.5]
     at (335.416, 722.3714) {ipe disk};
  \pic[ipe mark scale=1.5]
     at (344.6018, 731.5573) {ipe disk};
  \pic[ipe mark scale=1.5]
     at (307.8584, 731.5573) {ipe disk};
  \pic[ipe mark scale=1.5]
     at (372.1594, 722.3714) {ipe disk};
  \pic[ipe mark scale=1.5]
     at (381.3453, 731.5573) {ipe disk};
  \pic[ipe mark scale=1.5]
     at (372.1594, 740.7431) {ipe disk};
  \pic[ipe mark scale=1.5]
     at (372.1594, 759.1147) {ipe disk};
  \pic[ipe mark scale=1.5]
     at (317.0443, 740.7431) {ipe disk};
  \pic[ipe mark scale=1.5]
     at (126.2296, 777.4864) {ipe disk};
  \pic[ipe mark scale=1.5]
     at (162.9726, 777.4864) {ipe disk};
  \pic[ipe mark scale=1.5]
     at (438.965, 777.4868) {ipe disk};
  \pic[ipe mark scale=1.5]
     at (448.1509, 777.4868) {ipe disk};
  \pic[ipe mark scale=1.5]
     at (457.3367, 777.4868) {ipe disk};
  \pic[ipe mark scale=1.5]
     at (466.5225, 777.4868) {ipe disk};
  \pic[ipe mark scale=1.5]
     at (471.1154, 772.8939) {ipe disk};
  \pic[ipe mark scale=1.5]
     at (475.7083, 777.4868) {ipe disk};
  \pic[ipe mark scale=1.5]
     at (484.8942, 777.4868) {ipe disk};
  \pic[ipe mark scale=1.5]
     at (494.08, 777.4868) {ipe disk};
  \pic[ipe mark scale=1.5]
     at (503.2658, 777.4868) {ipe disk};
  \pic[ipe mark scale=1.5]
     at (507.8587, 772.8939) {ipe disk};
  \pic[ipe mark scale=1.5]
     at (512.4516, 777.4868) {ipe disk};
  \pic[ipe mark scale=1.5]
     at (521.6374, 777.4868) {ipe disk};
  \pic[ipe mark scale=1.5]
     at (530.8233, 777.4868) {ipe disk};
  \pic[ipe mark scale=1.5]
     at (540.0091, 777.4868) {ipe disk};
  \pic[ipe mark scale=1.5]
     at (544.602, 772.8939) {ipe disk};
  \pic[ipe mark scale=1.5]
     at (549.1949, 777.4868) {ipe disk};
  \pic[ipe mark scale=1.5]
     at (558.3807, 777.4868) {ipe disk};
  \pic[ipe mark scale=1.5]
     at (567.5666, 777.4868) {ipe disk};
  \pic[ipe mark scale=1.5]
     at (576.7524, 777.4868) {ipe disk};
  \pic[ipe mark scale=1.5]
     at (581.3453, 772.8939) {ipe disk};
  \pic[ipe mark scale=1.5]
     at (581.3453, 763.708) {ipe disk};
  \pic[ipe mark scale=1.5]
     at (576.7524, 759.1151) {ipe disk};
  \pic[ipe mark scale=1.5]
     at (567.5666, 759.1151) {ipe disk};
  \pic[ipe mark scale=1.5]
     at (558.3807, 759.1151) {ipe disk};
  \pic[ipe mark scale=1.5]
     at (549.1949, 759.1151) {ipe disk};
  \pic[ipe mark scale=1.5]
     at (544.602, 763.708) {ipe disk};
  \pic[ipe mark scale=1.5]
     at (540.0091, 759.1151) {ipe disk};
  \pic[ipe mark scale=1.5]
     at (530.8233, 759.1151) {ipe disk};
  \pic[ipe mark scale=1.5]
     at (521.6374, 759.1151) {ipe disk};
  \pic[ipe mark scale=1.5]
     at (512.4516, 759.1151) {ipe disk};
  \pic[ipe mark scale=1.5]
     at (507.8587, 763.708) {ipe disk};
  \pic[ipe mark scale=1.5]
     at (503.2658, 759.1151) {ipe disk};
  \pic[ipe mark scale=1.5]
     at (494.08, 759.1151) {ipe disk};
  \pic[ipe mark scale=1.5]
     at (484.8942, 759.1151) {ipe disk};
  \pic[ipe mark scale=1.5]
     at (475.7083, 759.1151) {ipe disk};
  \pic[ipe mark scale=1.5]
     at (471.1154, 763.708) {ipe disk};
  \pic[ipe mark scale=1.5]
     at (466.5225, 759.1151) {ipe disk};
  \pic[ipe mark scale=1.5]
     at (457.3367, 759.1151) {ipe disk};
  \pic[ipe mark scale=1.5]
     at (448.1509, 759.1151) {ipe disk};
  \pic[ipe mark scale=1.5]
     at (438.965, 759.1151) {ipe disk};
  \pic[ipe mark scale=1.5]
     at (429.7792, 759.1151) {ipe disk};
  \pic[ipe mark scale=1.5]
     at (420.5934, 759.1151) {ipe disk};
  \pic[ipe mark scale=1.5]
     at (416.0005, 754.5222) {ipe disk};
  \pic[ipe mark scale=1.5]
     at (416.0005, 745.3364) {ipe disk};
  \pic[ipe mark scale=1.5]
     at (420.5934, 740.7435) {ipe disk};
  \pic[ipe mark scale=1.5]
     at (429.7792, 740.7435) {ipe disk};
  \pic[ipe mark scale=1.5]
     at (438.965, 740.7435) {ipe disk};
  \pic[ipe mark scale=1.5]
     at (448.1509, 740.7435) {ipe disk};
  \pic[ipe mark scale=1.5]
     at (452.7438, 745.3364) {ipe disk};
  \pic[ipe mark scale=1.5]
     at (452.7438, 754.5222) {ipe disk};
  \pic[ipe mark scale=1.5]
     at (457.3367, 740.7435) {ipe disk};
  \pic[ipe mark scale=1.5]
     at (466.5225, 740.7435) {ipe disk};
  \pic[ipe mark scale=1.5]
     at (475.7083, 740.7435) {ipe disk};
  \pic[ipe mark scale=1.5]
     at (484.8942, 740.7435) {ipe disk};
  \pic[ipe mark scale=1.5]
     at (489.4871, 745.3364) {ipe disk};
  \pic[ipe mark scale=1.5]
     at (489.4871, 754.5222) {ipe disk};
  \pic[ipe mark scale=1.5]
     at (494.08, 740.7435) {ipe disk};
  \pic[ipe mark scale=1.5]
     at (503.2658, 740.7435) {ipe disk};
  \pic[ipe mark scale=1.5]
     at (512.4516, 740.7435) {ipe disk};
  \pic[ipe mark scale=1.5]
     at (521.6374, 740.7435) {ipe disk};
  \pic[ipe mark scale=1.5]
     at (526.2304, 745.3364) {ipe disk};
  \pic[ipe mark scale=1.5]
     at (526.2304, 754.5222) {ipe disk};
  \pic[ipe mark scale=1.5]
     at (530.8233, 740.7435) {ipe disk};
  \pic[ipe mark scale=1.5]
     at (540.0091, 740.7435) {ipe disk};
  \pic[ipe mark scale=1.5]
     at (549.1949, 740.7435) {ipe disk};
  \pic[ipe mark scale=1.5]
     at (558.3807, 740.7435) {ipe disk};
  \pic[ipe mark scale=1.5]
     at (562.9737, 745.3364) {ipe disk};
  \pic[ipe mark scale=1.5]
     at (562.9737, 754.5222) {ipe disk};
  \pic[ipe mark scale=1.5]
     at (567.5666, 740.7435) {ipe disk};
  \pic[ipe mark scale=1.5]
     at (576.7524, 740.7435) {ipe disk};
  \pic[ipe mark scale=1.5]
     at (581.3453, 736.1506) {ipe disk};
  \pic[ipe mark scale=1.5]
     at (581.3453, 726.9647) {ipe disk};
  \pic[ipe mark scale=1.5]
     at (576.7524, 722.3718) {ipe disk};
  \pic[ipe mark scale=1.5]
     at (567.5666, 722.3718) {ipe disk};
  \pic[ipe mark scale=1.5]
     at (558.3807, 722.3718) {ipe disk};
  \pic[ipe mark scale=1.5]
     at (549.1949, 722.3718) {ipe disk};
  \pic[ipe mark scale=1.5]
     at (544.602, 726.9647) {ipe disk};
  \pic[ipe mark scale=1.5]
     at (544.602, 736.1506) {ipe disk};
  \pic[ipe mark scale=1.5]
     at (540.0091, 722.3718) {ipe disk};
  \pic[ipe mark scale=1.5]
     at (530.8233, 722.3718) {ipe disk};
  \pic[ipe mark scale=1.5]
     at (521.6374, 722.3718) {ipe disk};
  \pic[ipe mark scale=1.5]
     at (512.4516, 722.3718) {ipe disk};
  \pic[ipe mark scale=1.5]
     at (507.8587, 726.9647) {ipe disk};
  \pic[ipe mark scale=1.5]
     at (503.2658, 722.3718) {ipe disk};
  \pic[ipe mark scale=1.5]
     at (507.8587, 736.1506) {ipe disk};
  \pic[ipe mark scale=1.5]
     at (494.08, 722.3718) {ipe disk};
  \pic[ipe mark scale=1.5]
     at (484.8942, 722.3718) {ipe disk};
  \pic[ipe mark scale=1.5]
     at (489.4871, 717.7789) {ipe disk};
  \pic[ipe mark scale=1.5]
     at (475.7083, 722.3718) {ipe disk};
  \pic[ipe mark scale=1.5]
     at (466.5225, 722.3718) {ipe disk};
  \pic[ipe mark scale=1.5]
     at (471.1154, 726.9647) {ipe disk};
  \pic[ipe mark scale=1.5]
     at (471.1154, 736.1506) {ipe disk};
  \pic[ipe mark scale=1.5]
     at (457.3367, 722.3718) {ipe disk};
  \pic[ipe mark scale=1.5]
     at (448.1509, 722.3718) {ipe disk};
  \pic[ipe mark scale=1.5]
     at (452.7438, 717.7789) {ipe disk};
  \pic[ipe mark scale=1.5]
     at (438.965, 722.3718) {ipe disk};
  \pic[ipe mark scale=1.5]
     at (429.7792, 722.3718) {ipe disk};
  \pic[ipe mark scale=1.5]
     at (434.3721, 726.9647) {ipe disk};
  \pic[ipe mark scale=1.5]
     at (434.3721, 736.1506) {ipe disk};
  \pic[ipe mark scale=1.5]
     at (420.5934, 722.3718) {ipe disk};
  \pic[ipe mark scale=1.5]
     at (416.0005, 717.7789) {ipe disk};
  \pic[ipe mark scale=1.5]
     at (416.0005, 708.5931) {ipe disk};
  \pic[ipe mark scale=1.5]
     at (420.5934, 704.0002) {ipe disk};
  \pic[ipe mark scale=1.5]
     at (429.7792, 704.0002) {ipe disk};
  \pic[ipe mark scale=1.5]
     at (438.965, 704.0002) {ipe disk};
  \pic[ipe mark scale=1.5]
     at (448.1509, 704.0002) {ipe disk};
  \pic[ipe mark scale=1.5]
     at (452.7438, 708.5931) {ipe disk};
  \pic[ipe mark scale=1.5]
     at (457.3367, 704.0002) {ipe disk};
  \pic[ipe mark scale=1.5]
     at (466.5225, 704.0002) {ipe disk};
  \pic[ipe mark scale=1.5]
     at (475.7083, 704.0002) {ipe disk};
  \pic[ipe mark scale=1.5]
     at (484.8942, 704.0002) {ipe disk};
  \pic[ipe mark scale=1.5]
     at (489.4871, 708.5931) {ipe disk};
  \pic[ipe mark scale=1.5]
     at (494.08, 704.0002) {ipe disk};
  \pic[ipe mark scale=1.5]
     at (503.2658, 704.0002) {ipe disk};
  \pic[ipe mark scale=1.5]
     at (512.4516, 704.0002) {ipe disk};
  \pic[ipe mark scale=1.5]
     at (521.6374, 704.0002) {ipe disk};
  \pic[ipe mark scale=1.5]
     at (526.2304, 708.5931) {ipe disk};
  \pic[ipe mark scale=1.5]
     at (530.8233, 704.0002) {ipe disk};
  \pic[ipe mark scale=1.5]
     at (526.2304, 717.7789) {ipe disk};
  \pic[ipe mark scale=1.5]
     at (540.0091, 704.0002) {ipe disk};
  \pic[ipe mark scale=1.5]
     at (549.1949, 704.0002) {ipe disk};
  \pic[ipe mark scale=1.5]
     at (558.3807, 704.0002) {ipe disk};
  \pic[ipe mark scale=1.5]
     at (562.9737, 708.5931) {ipe disk};
  \pic[ipe mark scale=1.5]
     at (562.9737, 717.7789) {ipe disk};
  \pic[ipe mark scale=1.5]
     at (434.3721, 763.708) {ipe disk};
  \pic[ipe mark scale=1.5]
     at (434.3721, 772.8939) {ipe disk};
  \draw[shift={(434.372, 772.894)}, scale=0.5741]
    (0, 0)
     -- (8, 8)
     -- (248, 8)
     -- (256, 0)
     -- (256, -16)
     -- (248, -24)
     -- (-24, -24)
     -- (-32, -32)
     -- (-32, -48)
     -- (-24, -56);
  \draw[shift={(420.593, 740.743)}, scale=0.5741]
    (0, 0)
     -- (272, 0)
     -- (272, 0);
  \draw[shift={(576.752, 740.743)}, scale=0.5741]
    (0, 0)
     -- (8, -8)
     -- (8, -24)
     -- (0, -32)
     -- (-272, -32);
  \draw[shift={(420.593, 722.372)}, scale=0.5741]
    (0, 0)
     -- (-8, -8)
     -- (-8, -24)
     -- (0, -32)
     -- (240, -32)
     -- (240, -32);
  \draw[shift={(558.381, 704)}, scale=0.5741]
    (0, 0)
     -- (8, 8)
     -- (8, 24)
     -- (0, 32);
  \draw[shift={(562.974, 717.779)}, scale=0.5741]
    (0, 0)
     -- (8, 8);
  \draw[shift={(530.823, 704)}, scale=0.5741]
    (0, 0)
     -- (-8, 8)
     -- (-16, 0);
  \draw[shift={(526.23, 708.593)}, scale=0.5741]
    (0, 0)
     -- (0, 16)
     -- (-8, 24);
  \draw[shift={(526.23, 717.779)}, scale=0.5741]
    (0, 0)
     -- (8, 8);
  \draw[shift={(494.08, 704)}, scale=0.5741]
    (0, 0)
     -- (-8, 8)
     -- (-16, 0);
  \draw[shift={(489.487, 708.593)}, scale=0.5741]
    (0, 0)
     -- (0, 16)
     -- (-8, 24);
  \draw[shift={(489.487, 717.779)}, scale=0.5741]
    (0, 0)
     -- (8, 8);
  \draw[shift={(457.337, 704)}, scale=0.5741]
    (0, 0)
     -- (-8, 8)
     -- (-16, 0);
  \draw[shift={(452.744, 708.593)}, scale=0.5741]
    (0, 0)
     -- (0, 16)
     -- (-8, 24)
     -- (-8, 24);
  \draw[shift={(452.744, 717.779)}, scale=0.5741]
    (0, 0)
     -- (8, 8);
  \draw[shift={(429.779, 722.372)}, scale=0.5741]
    (0, 0)
     -- (8, 8)
     -- (16, 0);
  \draw[shift={(434.372, 726.965)}, scale=0.5741]
    (0, 0)
     -- (0, 16)
     -- (-8, 24);
  \draw[shift={(434.372, 736.151)}, scale=0.5741]
    (0, 0)
     -- (8, 8);
  \draw[shift={(448.151, 740.743)}, scale=0.5741]
    (0, 0)
     -- (8, 8)
     -- (16, 0);
  \draw[shift={(466.523, 740.743)}, scale=0.5741]
    (0, 0)
     -- (8, -8)
     -- (16, 0);
  \draw[shift={(466.523, 722.372)}, scale=0.5741]
    (0, 0)
     -- (8, 8)
     -- (16, 0);
  \draw[shift={(448.151, 759.115)}, scale=0.5741]
    (0, 0)
     -- (8, -8)
     -- (16, 0);
  \draw[shift={(452.744, 745.336)}, scale=0.5741]
    (0, 0)
     -- (0, 16);
  \draw[shift={(484.894, 759.115)}, scale=0.5741]
    (0, 0)
     -- (8, -8)
     -- (16, 0);
  \draw[shift={(484.894, 740.743)}, scale=0.5741]
    (0, 0)
     -- (8, 8)
     -- (16, 0);
  \draw[shift={(489.487, 745.336)}, scale=0.5741]
    (0, 0)
     -- (0, 16);
  \draw[shift={(521.637, 759.115)}, scale=0.5741]
    (0, 0)
     -- (8, -8)
     -- (16, 0);
  \draw[shift={(521.637, 740.743)}, scale=0.5741]
    (0, 0)
     -- (8, 8)
     -- (16, 0);
  \draw[shift={(526.23, 745.336)}, scale=0.5741]
    (0, 0)
     -- (0, 16);
  \draw[shift={(558.381, 759.115)}, scale=0.5741]
    (0, 0)
     -- (8, -8)
     -- (16, 0);
  \draw[shift={(558.381, 740.743)}, scale=0.5741]
    (0, 0)
     -- (8, 8)
     -- (16, 0);
  \draw[shift={(562.974, 745.336)}, scale=0.5741]
    (0, 0)
     -- (0, 16);
  \draw[shift={(540.009, 759.115)}, scale=0.5741]
    (0, 0)
     -- (8, 8)
     -- (16, 0);
  \draw[shift={(544.602, 763.708)}, scale=0.5741]
    (0, 0)
     -- (0, 16)
     -- (-8, 24);
  \draw[shift={(544.602, 772.894)}, scale=0.5741]
    (0, 0)
     -- (8, 8);
  \draw[shift={(503.266, 759.115)}, scale=0.5741]
    (0, 0)
     -- (8, 8)
     -- (16, 0);
  \draw[shift={(507.859, 763.708)}, scale=0.5741]
    (0, 0)
     -- (0, 16)
     -- (-8, 24);
  \draw[shift={(507.859, 772.894)}, scale=0.5741]
    (0, 0)
     -- (8, 8);
  \draw[shift={(466.523, 759.115)}, scale=0.5741]
    (0, 0)
     -- (8, 8)
     -- (16, 0);
  \draw[shift={(471.115, 763.708)}, scale=0.5741]
    (0, 0)
     -- (0, 16);
  \draw[shift={(475.708, 777.487)}, scale=0.5741]
    (0, 0)
     -- (-8, -8)
     -- (-16, 0);
  \draw[shift={(429.779, 759.115)}, scale=0.5741]
    (0, 0)
     -- (8, 8)
     -- (16, 0);
  \draw[shift={(434.372, 763.708)}, scale=0.5741]
    (0, 0)
     -- (0, 16);
  \draw[shift={(434.372, 763.708)}, scale=0.5741]
    (0, 0)
     -- (0, 16);
  \draw[shift={(503.266, 740.743)}, scale=0.5741]
    (0, 0)
     -- (8, -8);
  \draw[shift={(512.452, 740.743)}, scale=0.5741]
    (0, 0)
     -- (-8, -8)
     -- (-8, -24)
     -- (0, -32);
  \draw[shift={(507.859, 726.965)}, scale=0.5741]
    (0, 0)
     -- (-8, -8);
  \draw[shift={(471.115, 726.965)}, scale=0.5741]
    (0, 0)
     -- (0, 16);
  \draw[shift={(540.009, 722.372)}, scale=0.5741]
    (0, 0)
     -- (8, 8)
     -- (16, 0);
  \draw[shift={(544.602, 726.965)}, scale=0.5741]
    (0, 0)
     -- (0, 16)
     -- (-8, 24);
  \draw[shift={(549.195, 740.743)}, scale=0.5741]
    (0, 0)
     -- (-8, -8);
  \draw[shift={(549.195, 740.743)}, scale=0.5741]
    (0, 0)
     -- (-8, -8);
  \pic[ipe mark scale=1.5]
     at (381.3445, 777.4868) {ipe disk};
  \pic[ipe mark scale=1.5]
     at (215.9999, 703.9996) {ipe disk};
  \node[ipe node]
     at (290.868, 682.14) {(b)};
  \node[ipe node]
     at (491.519, 682.14) {(c)};
\end{tikzpicture}
\caption{An example of an elementary wall (a), a $1$-subdivided wall (b), and the line graph of a $1$-subdivided wall (c).}\label{fig:walls and their line graphs}
\end{figure}

\begin{lemma}\label{lemma:H is not any H}
The class of balanced complete bipartite graphs
and, for all $q\ge 0$, the class of $q$-subdivided walls and the class of their line graphs, are $(\tw,\omega)$-unbounded.
\end{lemma}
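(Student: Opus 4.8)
The plan is to exploit the fact that a graph class is $(\tw,\omega)$-unbounded as soon as it contains members of bounded clique number and unbounded treewidth: if $f$ were a binding function, then taking $G' = G$ in the definition would force $\tw(G) \le f(\omega(G))$, which is impossible once $\omega$ stays bounded along a sequence with $\tw \to \infty$. So for each of the three families I would bound the clique number and exhibit members of arbitrarily large treewidth. The clique numbers are immediate: $K_{q,q}$ is triangle-free, so $\omega(K_{q,q}) = 2$; every $q$-subdivided wall is bipartite (being obtained from the bipartite elementary wall by subdividing edges) and hence triangle-free, so its clique number equals $2$; and since each $q$-subdivided wall is subcubic and triangle-free, the classical description of pairwise intersecting edge families (a star or a triangle) gives $\omega\bigl(L(W^{(q)})\bigr) \le 3$ for every wall $W$ and every $q\ge 0$.

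For the treewidth bounds I would use that treewidth is minor-monotone and that elementary walls of growing height have unbounded treewidth (they contain arbitrarily large grid minors). For $K_{q,q}$ it suffices to recall the well-known value $\tw(K_{q,q}) = q$, so that $\omega$ remains $2$ while $\tw \to \infty$. For the $q$-subdivided walls, $W^{(q)}$ is by definition a subdivision of the elementary wall $W$, hence $W \subseteq_{tm} W^{(q)}$ and therefore $W \subseteq_{m} W^{(q)}$; minor-monotonicity gives $\tw(W^{(q)}) \ge \tw(W)$, which tends to infinity as the height of $W$ grows.

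The crux is the line-graph family, for which I would first prove the auxiliary statement that $G \subseteq_m L(G)$ for every connected graph $G$ containing a cycle. The idea is to orient $G$ so that every vertex has in-degree at least $1$ --- possible precisely because each component has a cycle, by orienting a cycle cyclically and the remaining spanning-tree edges away from it --- and to let the bag $B_v$ consist of all edges oriented towards $v$. Edges sharing the head $v$ are pairwise adjacent in $L(G)$, so each $B_v$ is a nonempty clique (hence connected), and the bags are pairwise disjoint since each edge has a unique head. For an edge $uv$ oriented towards $v$ we have $uv \in B_v$, while $B_u$ is nonempty and every edge of $B_u$ is incident to $u$; as $uv$ is also incident to $u$, it is adjacent in $L(G)$ to any edge of $B_u$, so $B_u$ and $B_v$ are joined, and contracting the bags recovers $G$. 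Applying this to the connected, cycle-containing graph $W^{(q)}$ gives $\tw\bigl(L(W^{(q)})\bigr) \ge \tw(W^{(q)}) \ge \tw(W)$, so the line graphs have clique number at most $3$ and unbounded treewidth. I expect this orientation-and-contraction argument to be the main obstacle, since it is exactly where the disjointness of the bags --- and thus the hypothesis that $W^{(q)}$ contains a cycle --- is essential; the other two families reduce to standard facts about the treewidth of complete bipartite graphs and of walls.
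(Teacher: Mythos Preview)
Your argument is correct. For the balanced complete bipartite graphs and the $q$-subdivided walls you do essentially what the paper does: bound the clique number by a constant and invoke minor-monotonicity of treewidth together with the unbounded treewidth of elementary walls (the paper uses the minimum-degree lower bound for $\tw(K_{n,n})$, you quote $\tw(K_{q,q})=q$ directly, but this is the same point).

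The genuine difference is in the treatment of the line graphs. The paper simply cites the Harvey--Wood inequality $\tw(L(G)) \ge \tfrac{1}{2}(\tw(G)+1)-1$ and is done in one line. You instead prove from scratch that $G \subseteq_m L(G)$ whenever $G$ is connected and contains a cycle, via the orientation argument: choose an orientation with minimum in-degree at least one, let $B_v$ be the set of in-edges at $v$, and observe that these bags are nonempty cliques in $L(G)$, pairwise disjoint, and joined whenever $uv \in E(G)$. This yields the sharper bound $\tw(L(W^{(q)})) \ge \tw(W^{(q)}) \ge \tw(W)$ without any external reference. The trade-off is length versus self-containment: the paper's proof is a two-line citation, while yours is an elementary and reusable lemma that actually improves the constant (and makes transparent why the cycle hypothesis is needed---it is exactly what guarantees the existence of the orientation, hence nonemptiness and disjointness of the bags).
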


\begin{proof}
It is well known that the minimum degree of a graph is a lower bound on its treewidth (see, e.g.,~\cite{MR2829452}).
Hence, $\tw(K_{n,n})\ge n$ and clearly, since $K_{n,n}$ is bipartite, we have $\omega(K_{n,n}) = 2$.
    We conclude that the class of balanced complete bipartite graphs is $(\tw,\omega)$-unbounded.

    The class of elementary walls has unbounded treewidth (see, e.g.,~\cite{MR3451044}), and, since the treewidth of a graph $G$ is at least as large as the treewidth of any of its minors (see, e.g.,~\cite{MR1647486}), so is the class of $q$-subdivided walls for any $q\ge 0$.
    Furthermore, since $\tw(L(G)) \geq \frac{1}{2} (\tw(G) + 1)-1$, as shown by Harvey and Wood~\cite{MR3820302}, the class of line graphs of $q$-subdivided walls also has unbounded treewidth. The clique number of each graph in these classes is bounded by~$3$, and hence all these classes are indeed $(\tw,\omega)$-unbounded.
\end{proof}

\Cref{lemma:H is not any H} implies the following necessary conditions for $(\tw, \omega)$-boundedness of a graph class excluding a single graph $H$ with respect to some graph containment relation (in particular, with respect to one of the six relations considered in this paper).

\begin{corollary}\label{H is in the three classes}
    Let $\subseteq$ be any graph containment relation, and let $H$ be a graph such that the class of graphs excluding $H$ with respect to relation $\subseteq$ is $(\tw, \omega)$-bounded.
    Then $H$ is in relation $\subseteq$ with some balanced complete bipartite graph, with some $q$-subdivided wall for each $q \geq 0$, and with the line graph of some $q'$-subdivided wall for each $q' \geq 0$.
\end{corollary}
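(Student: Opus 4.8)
The plan is to prove the contrapositive of each of the three assertions, using \cref{lemma:H is not any H} as the source of unboundedness. Write $\G$ for the class of graphs excluding $H$ with respect to $\subseteq$, and assume $\G$ is $(\tw,\omega)$-bounded, say with binding function $f$. I would first record the elementary monotonicity principle that any subclass of a $(\tw,\omega)$-bounded class is again $(\tw,\omega)$-bounded with the same binding function: if $\mathcal{F}\subseteq\G$, then for every $G\in\mathcal{F}$ and every induced subgraph $G'$ of $G$ we have $G\in\G$, so the inequality $\tw(G')\le f(\omega(G'))$ that is already guaranteed for members of $\G$ applies verbatim to $\mathcal{F}$.

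Now fix one of the three families of \cref{lemma:H is not any H} and call it $\mathcal{F}$ --- for instance the family of balanced complete bipartite graphs. Suppose toward a contradiction that $H$ is not in relation $\subseteq$ with any member of $\mathcal{F}$. By definition, every graph in $\mathcal{F}$ then excludes $H$, that is, $\mathcal{F}\subseteq\G$. By the monotonicity principle, $\mathcal{F}$ would be $(\tw,\omega)$-bounded, contradicting \cref{lemma:H is not any H}. Hence $H$ is in relation $\subseteq$ with some balanced complete bipartite graph. Applying the identical argument to the class of $q$-subdivided walls for each fixed $q\ge 0$, and to the class of line graphs of $q'$-subdivided walls for each fixed $q'\ge 0$ (each of which is asserted to be $(\tw,\omega)$-unbounded by \cref{lemma:H is not any H}), yields the remaining two conclusions for every $q$ and every $q'$.

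There is no genuine obstacle here: the corollary is essentially a repackaging of \cref{lemma:H is not any H} via the monotonicity principle. The only two points needing care are the correct formulation of the contrapositive --- the hypothesis ``$H$ is not in relation $\subseteq$ with any member of $\mathcal{F}$'' must be read as ``$\mathcal{F}$ consists entirely of $H$-excluding graphs,'' so that $\mathcal{F}\subseteq\G$ --- and the observation that $(\tw,\omega)$-boundedness is inherited by subclasses, which is immediate from the definition since a single binding function is demanded to work across the whole class together with all of its induced subgraphs. Notably, no property of $\subseteq$ beyond the meaning of ``excluding $H$'' is used, which is precisely why the statement holds for an arbitrary containment relation.
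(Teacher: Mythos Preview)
Your argument is correct and is exactly the intended one: the paper does not even spell out a separate proof but simply states the corollary as an immediate consequence of \cref{lemma:H is not any H}, which is precisely what your contrapositive-plus-monotonicity argument formalizes.
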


\section{Forbidding an induced subgraph or an induced topological minor}\label{sec:isitm}

We first consider graph classes excluding a graph $H$ as in induced subgraph or as an induced topological minor.
The following characterization of $(\tw,\omega)$-bounded graph classes excluding a single forbidden induced subgraph is derived using
\cref{H is edgeless,H is in the three classes}.

\begin{theorem}\label{thm:forbidden induced subgraph}
Let $H$ a graph. Then, the class of $H$-free graphs is $(\tw,\omega)$-bounded if and only if one of the following conditions holds.
\begin{enumerate}
\item $H \subseteq_{is} P_3$ with a binding function $f(k) = k-1$.
\item $H$ is edgeless with a binding function $f(k) = R(k+1,|V(H)|)-2$.
\end{enumerate}
\end{theorem}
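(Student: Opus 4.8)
The plan is to prove both implications of the equivalence, treating the ``if'' direction (sufficiency of the two conditions) as the routine part and the ``only if'' direction (necessity) as the part that extracts structural information from \cref{H is in the three classes}.

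For sufficiency, the case in which $H$ is edgeless is exactly \cref{H is edgeless}, which already supplies the binding function $f(k)=R(k+1,|V(H)|)-2$. For the case $H\subseteq_{is}P_3$, I would first use transitivity of the induced subgraph relation: any $H$-free graph $G$ is automatically $P_3$-free, since an induced $P_3$ in $G$ would contain an induced copy of $H$. It is then standard that a $P_3$-free graph is a disjoint union of cliques (given two non-adjacent vertices in a common component, a shortest path between them supplies an induced $P_3$), and the treewidth of a disjoint union of cliques equals its largest clique size minus one. Hence $\tw(G)=\omega(G)-1$, giving the binding function $f(k)=k-1$; since $H$-freeness is hereditary, the same bound holds for every induced subgraph.

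For necessity, I would assume the class of $H$-free graphs is $(\tw,\omega)$-bounded and that $H$ has at least one edge (otherwise $H$ is edgeless and the second condition holds), and then apply \cref{H is in the three classes} with $\subseteq$ being $\subseteq_{is}$. This yields three facts simultaneously. First, $H$ is an induced subgraph of some balanced complete bipartite graph $K_{n,n}$; since every induced subgraph of $K_{n,n}$ (obtained by selecting subsets of the two sides) is itself complete bipartite, this forces $H\cong K_{a,b}$. Second, $H$ is an induced subgraph of a $q$-subdivided wall for arbitrarily large $q$; as the girth of these walls tends to infinity with $q$, choosing $q$ so that the girth exceeds $|V(H)|$ shows that $H$ contains no cycle, i.e. $H$ is a forest, whence $\min(a,b)\le 1$. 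Combined with the presence of an edge (so $a,b\ge 1$), this gives $H\cong K_{1,b}$ for some $b\ge 1$. Third, $H$ is an induced subgraph of the line graph of some subdivided wall, and line graphs are claw-free; since $K_{1,3}\subseteq_{is}K_{1,b}$ for every $b\ge 3$, this forces $b\le 2$. Therefore $H\cong K_{1,1}=K_2$ or $H\cong K_{1,2}=P_3$, and in both cases $H\subseteq_{is}P_3$.

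The main obstacle is the necessity direction, and specifically selecting the right property of each of the three extremal classes so that together they pin $H$ down to an induced subgraph of $P_3$: complete-bipartiteness from $K_{n,n}$, acyclicity from the high-girth walls, and claw-freeness from the line graphs. Each deduction on its own is short, but it is their combination that is decisive; in particular, the claw-freeness of line graphs is precisely what excludes $H$ being the claw $K_{1,3}$, a graph that is otherwise fully compatible with the first two constraints.
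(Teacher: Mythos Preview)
Your proof is correct and follows essentially the same approach as the paper: sufficiency via \cref{H is edgeless} and the $P_3$-free structure, necessity via \cref{H is in the three classes} to force $H\cong K_{a,b}$ and then cut down to $H\in\{K_{1,1},K_{1,2}\}$. The only cosmetic difference is that the paper uses just two of the three extremal families---observing that line graphs of elementary walls are both claw-free and $C_4$-free, which simultaneously rules out $K_{1,3}$ and $K_{2,2}$---whereas you invoke the high girth of $q$-subdivided walls separately to obtain acyclicity; both routes reach the same endpoint.
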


\begin{proof}
If $H$ is edgeless, then \cref{H is edgeless} applies.
If $H \subseteq_{is} P_{3}$, then every $H$-free graph $G$ is $P_{3}$-free and $G$ is a disjoint union of complete graphs.
Thus, $\tw(G) = \omega(G)-1$ in this case.

Suppose now that $H$ is neither edgeless nor an induced subgraph of $P_3$ and that the class of $H$-free graphs is $(\tw,\omega)$-bounded.
By \cref{H is in the three classes}, $H$ is an induced subgraph of some complete bipartite graph and also an induced subgraph of the line graph of some elementary wall.
In particular, $H$ must be isomorphic to a complete bipartite graph $K_{p,q}$ with $1 \leq p \leq q$ (note that we must have $p,q\ge 1$ since $H$ is not edgeless).
Furthermore, since line graphs of elementary walls are $\{\text{claw}, C_4\}$-free (or, equivalently, $\{K_{1,3},K_{2,2}\}$-free), we infer that $H$ must be isomorphic to either $K_{1,1}$ or $K_{1,2}$.
Thus, $H$ is an induced subgraph of $P_{3}$, a contradiction.
\end{proof}

A \textit{cut-vertex} in a connected graph $G$ is a vertex whose removal disconnects the graph. A \textit{block} of a graph is a maximal connected subgraph without cut-vertices. A \textit{block-cactus graph} is a graph every block of which is a cycle or a complete graph. In her PhD thesis~\cite{hartinger2017nove}, Hartinger proved that a graph is
$K_4^-$-induced-minor-free if and only if $G$ is a block-cactus graph. The same approach actually shows that these properties are also equivalent to excluding $K_4^-$ as an induced topological minor.

\begin{lemma}\label{diamond-itm-free graphs are block-cactus-graphs}
Let $G$ be a graph. Then, the following conditions are equivalent:
\begin{enumerate}
    \item $G$ is $K_4^-$-induced-minor-free.
    \item $G$ is $K_4^-$-induced-topological-minor-free.
    \item $G$ is a block-cactus graph.
\end{enumerate}
\end{lemma}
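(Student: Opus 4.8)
The plan is to establish the cyclic chain of implications $(1)\Rightarrow(2)\Rightarrow(3)\Rightarrow(1)$. The implication $(1)\Rightarrow(2)$ is immediate from the containment implications recorded in \cref{sec:prelim}: since $K_4^-\subseteq_{itm}G$ implies $K_4^-\subseteq_{im}G$, a graph with no $K_4^-$ as an induced minor has no $K_4^-$ as an induced topological minor. For $(3)\Rightarrow(1)$ I would first argue that the class of block-cactus graphs is closed under vertex deletion and edge contraction, hence under induced minors: deleting a vertex only shrinks a block (a cycle becomes a union of paths, a complete graph stays complete) and never merges blocks, while contracting an edge inside a block turns a cycle $C_k$ into $C_{k-1}$ (or a triangle, and $K_k$ into $K_{k-1}$) and leaves all other blocks attached at the new vertex. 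Since $K_4^-$ itself is not a block-cactus graph—its unique block, the diamond, is $2$-connected but neither a cycle nor complete—no block-cactus graph can have $K_4^-$ as an induced minor. This reduces everything to $(2)\Rightarrow(3)$, which I would prove in contrapositive form: \emph{if $G$ is not a block-cactus graph, then $K_4^-\subseteq_{itm}G$}.

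For this main implication I would reduce to a single block. If $G$ is not block-cactus, some block $B$ is neither a cycle nor complete; such a $B$ is $2$-connected, has at least four vertices, and is an induced subgraph of $G$ (if $u,v\in V(B)$ are joined by an edge of $G$, that edge lies in a block sharing the two vertices $u,v$ with $B$, forcing it to coincide with $B$). It therefore suffices to find $K_4^-$ as an induced topological minor of $B$. The key reformulation is that $K_4^-\subseteq_{itm}B$ is equivalent to $B$ containing an \emph{induced theta}: two branch vertices joined by three internally disjoint induced paths whose union induces no further edges. Indeed, every subdivision of the diamond is a theta graph (three internally disjoint paths, at least two of length at least two), and in a simple graph any theta automatically has at least two paths of length at least two.

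To produce an induced theta I would split on whether $B$ is chordal. If $B$ is chordal, I would exploit minimal separators: choosing non-adjacent vertices $c,d$ and a minimal $c$\=/$d$ separator $S$, $2$-connectivity gives $|S|\ge 2$, and in a chordal graph $S$ is a clique, so it contains an edge $s_1s_2$. A shortest $s_1$\=/$s_2$ path through the component of $B-S$ containing $c$ and a shortest one through the component containing $d$, together with the edge $s_1s_2$, then form an induced theta: both paths are induced and of length at least two, their interiors lie in distinct components and so are mutually non-adjacent, and the endpoints create no chords because the paths are shortest; this even exhibits an induced $K_4^-$ as a subgraph. If $B$ is not chordal, it has a hole $C$, and since $B\neq C$ is $2$-connected there is an ear, i.e.\ a path with both endpoints on $C$ and interior disjoint from $C$; the union of this ear with $C$ is a theta with the two ear endpoints as branch vertices and the two arcs of $C$ as the other two paths.

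The hard part, and what I expect to be the main obstacle, is upgrading the ear-theta of the non-chordal case to an \emph{induced} one. The ear together with $C$ may carry chords, and eliminating them requires extremal choices: taking $C$ to be a shortest hole and the ear to have as few internal vertices as possible controls chords within and between the three paths, but a single attachment vertex may still have several neighbors on $C$. These are precisely the delicate configurations, and I would handle them directly: an attachment vertex with three consecutive neighbors on $C$ already induces a $K_4^-$, while a vertex adjacent to all of $C$ yields a wheel, which contains an induced $K_4^-$ on the hub together with three consecutive rim vertices (using that $C$ has length at least four, so the two outer rim vertices are non-adjacent). The remaining bookkeeping—verifying that after the extremal choices every surviving chord falls into one of these diamond-producing patterns—is the crux; equivalently, it amounts to checking that the induced-minor model underlying Hartinger's characterization can always be realized by induced paths, so that it is in fact an induced subdivision.
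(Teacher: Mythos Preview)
Your cyclic structure and the implications $(1)\Rightarrow(2)$ and $(3)\Rightarrow(1)$ match the paper exactly. For the contrapositive of $(2)\Rightarrow(3)$ the paper also reduces to a bad block $B$ and splits on whether $B$ contains a hole, but handles each case differently from you. In the chordal case the paper simply quotes the classical fact that chordal diamond-free graphs are block graphs; your minimal-separator argument is a pleasant self-contained alternative, and your remark that the two shortest $s_1$--$s_2$ paths must have length exactly~$2$ (else they would close up an induced cycle of length $\ge 4$ with the edge $s_1s_2$) is what makes the claimed induced $K_4^-$ appear.

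The real divergence is in the non-chordal case, and here the paper's argument is more direct than the ear-and-extremal-hole scheme you sketch, and avoids precisely the bookkeeping you flag as the crux. Fixing \emph{any} hole $C$ in $B$, the paper first disposes of the case that some $x\in V(B)\setminus V(C)$ has at least two neighbours on $C$ (an induced theta is then immediate). Otherwise every vertex outside $C$ has a unique neighbour on $C$. Now, rather than taking a shortest hole and a shortest ear, the paper picks $z\notin V(C)$ with neighbour $v\in V(C)$ minimising the length of a shortest $z$--$C$ path $P$ in $B$ that avoids $v$; this single minimality forces all internal vertices of $P$ to miss $C$ entirely, and the ``unique neighbour on $C$'' hypothesis handles the two ends, so $G[V(C)\cup V(P)]$ is already an induced subdivision of $K_4^-$. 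Your extremal ear, once fleshed out, would converge to essentially this same object, but the paper's formulation turns the chord analysis into one line instead of the case split you anticipate.
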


\begin{proof}
Since every induced topological minor in $G$ is also an induced minor, $G$ is $K_4^-$-induced-topological-minor-free
if it is $K_4^-$-induced-minor-free.

Suppose that $G$ is $K_4^-$-induced-topological-minor-free and that $G$ is not a block-cactus graph.
We first show that $G$ contains a hole.
Suppose not. Then $G$ is a chordal $K_4^-$-free graph and thus a block graph (see~\cite{MR175113}), that is, a graph every block of which is a complete graph.
But then, $G$ is a block-cactus graph, a contradiction.
Hence, $G$ must contain a hole $C$, and in particular there exists some block $B$ of $G$ such that $V(C)\subseteq V(B)$.
Since $B$ is connected but not a cycle, there exists a vertex $x \in V(B) \setminus V(C)$ with a neighbor in $V(C)$.
If $|N(x) \cap V(C)| \geq 2$, it is easy to see that $G$ contains a subdivision of $K_4^-$ as an induced subgraph, a contradiction.
Thus, $|N(x) \cap V(C)| = 1$ and every vertex in $V(B)\setminus V(C)$ has at most one neighbor in $C$.
Now, take a vertex $z\in  V(B) \setminus V(C)$ which has a neighbor $v \in V(C)$ such that $z$ minimizes the length of a shortest path $P$ between $z$ and $C$ not containing $v$.
We know that $P$ must exist since $B$ has no cut-vertex. Also, we may assume that $v$ has no other neighbor in $P$; otherwise we could replace $z$ with this vertex and get a shorter path.
Let $v' \in V(C)$ be the vertex of $P$ in $V(C)\setminus \{v\}$ and $z'$ be the neighbor of $v'$ in $P$.
Recall that $z'$ has only one neighbor in $V(C)$.
Using a similar argument as for $v$, we may assume that $v'$ has no other neighbor in $P$.
The minimality of $P$ implies that the internal vertices of $P$ do not have a neighbor in $C$.
Hence, $G[V(C) \cup V(P)]$ is a subdivision of $K_4^-$, a contradiction.
This shows that every $K_4^-$-induced-topological-minor-free graph is a block-cactus graph.

Finally, let $G$ be a block-cactus graph, and let $H$ be an induced minor of $G$. It is not difficult to see that the class of block-cactus graphs
is closed under vertex deletions and edge contractions. Therefore, $H$ is also a block-cactus graph. Since $K_4^-$ is not a block-cactus graph,
$H$ cannot be isomorphic to $K_4^-$. Therefore, $G$ is $K_4^-$-induced-minor-free.
\end{proof}

\begin{lemma}\label{lem:block-cactus-graphs}
The class of block-cactus graphs is $(\tw,\omega)$-bounded with a binding function $f(k) = \max\{k-1,2\}$.
\end{lemma}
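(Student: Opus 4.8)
The plan is to reduce the problem to computing the treewidth of the individual blocks of $G$ and then to bound each of these by $\max\{\omega(G)-1,2\}$. The key structural tool is the standard fact that treewidth is determined blockwise: if a graph $G$ has blocks $B_1,\dots,B_m$, then $\tw(G)=\max_{1\le i\le m}\tw(B_i)$. For a disconnected graph this follows because the treewidth of a disjoint union is the maximum of the treewidths of its components, so it suffices to treat a connected $G$. There one proceeds by induction along the block--cut tree, repeatedly using the gluing lemma: if $G=G_1\cup G_2$ with $V(G_1)\cap V(G_2)=\{v\}$ and no edges between $G_1-v$ and $G_2-v$, then $\tw(G)=\max\{\tw(G_1),\tw(G_2)\}$. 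The nontrivial inequality here is ``$\le$'', which I would prove by taking optimal tree decompositions of $G_1$ and $G_2$, choosing in each a bag containing the shared vertex $v$, and joining the two trees by an edge between these two bags; the connectivity requirement for $v$ is met because both chosen bags contain $v$, and every edge of $G$ lies inside $G_1$ or inside $G_2$.

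Next I would compute the treewidth of each block type allowed in a block-cactus graph. A complete graph satisfies $\tw(K_n)=n-1$, and a cycle $C_\ell$ with $\ell\ge 3$ satisfies $\tw(C_\ell)=2$ (for $\ell=3$ this is $K_3$ with treewidth $2$, and for $\ell\ge 4$ a fan triangulation with a fixed apex vertex witnesses a width-$2$ tree decomposition). Since every block $B_i$ is a subgraph of $G$, we have $\omega(B_i)\le\omega(G)$. Hence if $B_i$ is a complete graph $K_{n_i}$ then $\tw(B_i)=n_i-1=\omega(B_i)-1\le\omega(G)-1$, while if $B_i$ is a cycle then $\tw(B_i)=2$. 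In either case $\tw(B_i)\le\max\{\omega(G)-1,2\}$.

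Combining the two observations gives $\tw(G)=\max_i\tw(B_i)\le\max\{\omega(G)-1,2\}$, which is the claimed binding function. The same bound then applies to every induced subgraph $G'$ of a block-cactus graph $G$: by \cref{diamond-itm-free graphs are block-cactus-graphs}, block-cactus graphs are exactly the $K_4^-$-induced-minor-free graphs, a class closed under vertex deletion, so $G'$ is again a block-cactus graph and $\tw(G')\le\max\{\omega(G')-1,2\}$. There is no serious obstacle in this argument; the only points requiring care are the blockwise treewidth identity, which I would either cite as a standard fact or justify via the gluing construction sketched above, and the verification that a cycle has treewidth exactly $2$.
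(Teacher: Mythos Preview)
Your proof is correct and follows essentially the same approach as the paper: reduce to blocks via the standard blockwise treewidth identity, then use that complete graphs have treewidth $n-1$ and cycles have treewidth $2$. You additionally spell out the gluing argument and the closure of the class under induced subgraphs (via \cref{diamond-itm-free graphs are block-cactus-graphs}), both of which the paper leaves implicit or to a citation.
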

\begin{proof}
The treewidth of a graph $G$ is the maximum treewidth of its blocks (see, e.g.,~\cite{MR1647486}).
Since the treewidth of a complete graph of order $k$ is $k-1$ and the treewidth of a cycle is two, the result follows.
\end{proof}

\begin{theorem}\label{thm:itm-free}
Let $H$ be a graph. Then, the class of $H$-induced-topological-minor-free graphs is $(\tw,\omega)$-bounded if and only if
one of the following conditions holds.
\begin{enumerate}
\item $H \subseteq_{is} C_3$ or $H \subseteq_{is} C_4$, in which case a binding function is $f(k) = k-1$.
\item $H \cong K_4^-$, in which case a binding function is $f(k) = \max\{k-1,2\}$.
\item $H$ is edgeless, in which case a binding function is $f(k) = R(k+1,|V(H)|)-2$.
\end{enumerate}
\end{theorem}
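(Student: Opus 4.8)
The plan is to prove both implications, treating the easy backward direction (\emph{sufficiency}) by a short case analysis that reduces each listed condition to an already established result, and the harder forward direction (\emph{necessity}) by extracting consequences from \cref{H is in the three classes}. Throughout I will use that the class of $H$-induced-topological-minor-free graphs is hereditary, so $(\tw,\omega)$-boundedness amounts to a bound $\tw(G)\le f(\omega(G))$ for all $G$ in the class.

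For sufficiency I would split into the three cases. If $H$ is edgeless, then the only subdivision of $H$ is $H$ itself, so being $H$-induced-topological-minor-free coincides with being $H$-free, and \cref{H is edgeless} yields the stated binding function. If $H\cong K_4^-$, then \cref{diamond-itm-free graphs are block-cactus-graphs} identifies the $K_4^-$-induced-topological-minor-free graphs with the block-cactus graphs, whose treewidth is bounded by $\max\{k-1,2\}$ via \cref{lem:block-cactus-graphs}. Finally, if $H\subseteq_{is}C_4$, I claim every $H$-induced-topological-minor-free graph is chordal: a non-chordal graph contains a hole, i.e.\ an induced subdivision of $C_4$, and a routine check (a hole contains induced paths, an edge, a pair of non-adjacent vertices, and a $C_4$) shows it then contains an induced subdivision of every induced subgraph of $C_4$, hence of $H$; chordal graphs satisfy $\tw=\omega-1$ by \cref{observation:chordal} and \cref{thm:chordal}. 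The case $H\subseteq_{is}C_3$ is analogous, with holes replaced by induced cycles of any length and chordal graphs by forests, for which $\tw\le\omega-1$ as well. In all of these subcases $f(k)=k-1$ works.

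For necessity, assume the class is $(\tw,\omega)$-bounded, so \cref{H is in the three classes} applies with $\subseteq{}={}\subseteq_{itm}$, giving three consequences. First, $H\subseteq_{itm}K_{n,n}$ for some $n$; since every induced subgraph of a complete bipartite graph is itself complete bipartite, \emph{some subdivision $S$ of $H$ is a complete bipartite graph} $K_{a,b}$. This is the decisive constraint: recovering $H$ from $S$ by suppressing degree-two vertices, but only as far as the result remains simple, forces $H$ to be edgeless, one of $K_2,P_3,C_3,C_4,K_4^-$, or a member of $\{K_{1,b}\}_{b\ge3}$, $\{K_{2,b}\}_{b\ge3}$, $\{K_{2,b}^+\}_{b\ge3}$, or $\{K_{a,b}\}_{3\le a\le b}$ (here $K_4^-$ appears precisely because $K_{2,3}$, obtained by subdividing the edge between the two degree-three vertices of $K_4^-$, is complete bipartite). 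Second, $H$ is an induced topological minor of a subdivided wall, which is subcubic, so $\Delta(H)\le 3$; this discards $K_{1,b}$ and $K_{2,b}$ for $b\ge4$, every $K_{2,b}^+$ with $b\ge3$, and every $K_{a,b}$ except $K_{3,3}$, leaving only $K_{1,3},K_{2,3},K_{3,3}$ beyond the desired graphs. Third, $H$ is an induced topological minor of a line graph of a subdivided wall, which is claw-free, so \emph{some subdivision of $H$ is claw-free}; but for $H\in\{K_{1,3},K_{2,3},K_{3,3}\}$ a degree-three vertex has three pairwise non-adjacent neighbours (they lie on one side of the bipartition), and in any subdivision the three corresponding neighbours stay pairwise non-adjacent and form, with $v$, an induced claw. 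This eliminates the last three graphs and leaves exactly the graphs permitted by the three conditions.

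The main obstacle is the necessity direction, and specifically the bookkeeping attached to the complete bipartite constraint. One must argue carefully which simple $H$ admit a complete bipartite subdivision: suppressing degree-two vertices can create multi-edges, which is exactly what separates $C_4$ (where suppression yields the simple graph $C_3$) from $K_{2,b}$ with $b\ge3$ (where at most one suppression keeps the graph simple, producing $K_{2,b-1}^+$). The same care pins down the one genuinely non-obvious survivor $K_4^-=K_{2,2}^+$ and certifies that it indeed meets all three necessary conditions, while its close relatives $K_{1,3},K_{2,3},K_{3,3}$ are removed by the subcubic or the claw-free requirement.
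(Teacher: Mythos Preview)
Your proof is correct and follows the same overall strategy as the paper: sufficiency via the structural lemmas on chordal and block-cactus graphs, and necessity via the three obstruction families of \cref{H is in the three classes}. The execution of the necessity direction differs slightly. The paper extracts all three constraints---planar, subcubic, claw-free---from the line graph of a $1$-subdivided wall, uses planarity together with subcubicness to conclude $H\subseteq_{itm}K_{2,3}$ in one stroke, and then finishes with claw-freeness. You instead enumerate explicitly all simple graphs $H$ admitting a complete bipartite subdivision, pull subcubicness from the (subdivided) walls themselves, and dispense with planarity entirely by letting claw-freeness eliminate $K_{3,3}$ along with $K_{1,3}$ and $K_{2,3}$. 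Both routes are sound; the paper's is more compact, while your explicit enumeration makes the appearance of $K_4^-$ (via $K_{2,3}$ being a subdivision of it) more transparent.
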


\begin{proof}
If $H$ is edgeless, then \cref{H is edgeless} applies.
If $H \subseteq_{is} C_3$ or $H \subseteq_{is} C_4$, then $H\subseteq_{itm}C_{4}$.
Hence, by \cref{observation:chordal}, the class of $H$-induced-topological-minor-free graphs is a subclass of the class of chordal graphs, and thus \cref{thm:chordal} applies.
If \hbox{$H \cong K_4^-$}, then according to \cref{diamond-itm-free graphs are block-cactus-graphs} the class of $H$-induced-topological-minor-free graphs is the class of block-cactus graphs, and~\cref{lem:block-cactus-graphs} applies.

For the converse direction, suppose that
$H \nsubseteq_{is} C_3$, $H \nsubseteq_{is} C_4$, $H \ncong K_4^-$, $H$ is not edgeless, and that the class of $H$-induced-topological-minor-free graphs is $(\tw,\omega)$-bounded.
By \cref{H is in the three classes}, $H$ is an induced topological minor of some complete bipartite graph and an induced topological minor of the line graph of some \hbox{$1$-subdivided} wall. Since the line graph of every \hbox{$1$-subdivided} wall is planar, subcubic, and claw-free, $H$ must also be planar, subcubic, and claw-free.
Furthermore, since $H$ is an induced topological minor of some complete bipartite graph, we must have $H\subseteq_{itm} K_{2,3}$, since otherwise
either $H$ would not be planar or it would not be subcubic.
Finally, claw-freeness implies that $H\in \{P_2, P_3, C_3, C_4, K_4^-\}$, a contradiction.
\end{proof}

\section{Forbidding an induced minor}\label{sec:im}

We now turn to graph classes excluding a single graph $H$ as an induced minor. Given a graph $G$, we denote by $\h(G)$ the \emph{Hadwiger number of $G$}, defined as the largest value of $p$ such that $K_p$ is a minor of $G$ (see~\cite{MR779891}).
We first develop some sufficient conditions for when sufficiently large Hadwiger number implies large clique number and then apply these results to characterize the graphs $H$ such that the class of $H$-induced-minor-free graphs is $(\tw,\omega)$-bounded.

\subsection{A detour: Hadwiger number versus clique number}

In \cref{lemma:Kn-1 im-free is (h omega)-bounded,lemma:W_4 im-free} we show that excluding either a complete graph minus an edge or a $4$-wheel as an induced minor results in an $(\h,\omega)$-bounded graph class with a linear binding function.

\begin{theorem}\label{lemma:Kn-1 im-free is (h omega)-bounded}
For each $p\ge 2$, the class of $K_{p}^-$-induced-minor-free graphs is $(\h,\omega)$-bounded with a binding function $f(k) = \max\{2p-4,k\}$.
\end{theorem}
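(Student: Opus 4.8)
The plan is to prove the statement in contrapositive form, after one reduction. Since the class of $K_p^-$-induced-minor-free graphs is closed under taking induced subgraphs, it suffices to show that every $K_p^-$-induced-minor-free graph $G$ satisfies $\eta(G)\le \max\{2p-4,\omega(G)\}$. I would therefore assume, for contradiction, that $G$ is $K_p^-$-induced-minor-free while $h:=\eta(G)>\max\{2p-4,\omega(G)\}$, and then exhibit an induced minor model of $K_p^-$ in $G$. The starting object is a minor model $(X_1,\dots,X_h)$ of $K_h$ chosen to minimize the total size $\sum_i|X_i|$; note that all bags are pairwise adjacent (since $K_h$ is complete, a minor model is automatically an induced minor model).

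First I would record two structural facts. Because $h>\omega(G)$, the bags cannot all be singletons (selecting one vertex from each would give a clique of size $h$), so some bag, say $X_1$, has $|X_1|\ge 2$. Fixing a spanning tree $T$ of $G[X_1]$, I claim every leaf $\ell$ of $T$ is a \emph{private attachment}, i.e.\ some bag $X_j$ satisfies $N(X_j)\cap X_1=\{\ell\}$: otherwise $X_1\setminus\{\ell\}$ would still be connected (leaf deletion) and still adjacent to every other bag, contradicting minimality. Since $|X_1|\ge 2$, the tree $T$ has at least two leaves.

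The heart of the argument is that, as all bags are pairwise adjacent, the single non-edge required by $K_p^-$ must be \emph{created} by shrinking a bag rather than found among existing bags. For a leaf $\ell$ of $T$, let $\mathcal{B}_\ell$ be the set of bags whose only neighbor in $X_1$ is $\ell$. I would then set $Y_2:=X_1\setminus\{\ell\}$ (connected), let $Y_1$ be any bag in $\mathcal{B}_\ell$ (so $Y_1$ has no neighbor in $Y_2$, whence $Y_1\not\sim Y_2$), and take the remaining $p-2$ bags to be any $p-2$ bags outside $\mathcal{B}_\ell\cup\{X_1\}$. Each such bag attaches to $X_1$ at a vertex other than $\ell$, hence is adjacent to $Y_2$; being bags of the $K_h$-model, these are also adjacent to $Y_1$ and to one another. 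Thus $Y_1,Y_2$ together with the $p-2$ chosen bags form an induced minor model of $K_p^-$, the unique non-edge being $Y_1Y_2$.

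It remains to guarantee a suitable leaf, which is the main obstacle: one must destroy exactly one adjacency while preserving $p-2$ common neighbors of the two sides. This is where the hypothesis $h\ge 2p-3$ enters through an averaging argument. The sets $\mathcal{B}_\ell$ are pairwise disjoint over the leaves of $T$ and contain at most $h-1$ bags in total, so the leaf $\ell$ minimizing $|\mathcal{B}_\ell|$ satisfies $|\mathcal{B}_\ell|\le (h-1)/2$ (there being at least two leaves), while privateness ensures $\mathcal{B}_\ell\neq\emptyset$ so that $Y_1$ exists. The number of bags available for the clique part is then at least $(h-1)-(h-1)/2=(h-1)/2\ge p-2$, the last inequality being exactly $h\ge 2p-3$. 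This yields the forbidden $K_p^-$ induced minor, the desired contradiction, and shows $\eta(G)\le\max\{2p-4,\omega(G)\}$, i.e.\ $f(k)=\max\{2p-4,k\}$ is a binding function.
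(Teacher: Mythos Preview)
Your proof is correct and follows the same overall strategy as the paper: take a minimum-size minor model of $K_h$, find a bag $X_1$ of size at least two, and split off a leaf of a spanning tree of $G[X_1]$ to manufacture the single non-edge of $K_p^-$.

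The one noteworthy difference is organizational. The paper fixes a single leaf $x$ and then does a case distinction on $|Z|$, where $Z$ is the set of other bags in which $x$ has a neighbor: if $|Z|\ge (h-1)/2$ it uses $\{x\}$ together with a bag $X_{v(x)}$ avoided by $x$ as the two non-adjacent sides; if $|Z|<(h-1)/2$ it uses $X_1\setminus\{x\}$ together with a privately attached bag, essentially your construction. You instead exploit the freedom in the choice of leaf: since the sets $\mathcal{B}_\ell$ are pairwise disjoint over the (at least two) leaves of the spanning tree, averaging gives a leaf with $|\mathcal{B}_\ell|\le (h-1)/2$, so the second construction always succeeds and no case split is needed. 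This is a mild but genuine streamlining of the paper's argument; the bound $h\ge 2p-3$ enters in exactly the same place.
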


\begin{proof}
Fix $p\ge 2$ and $k\in \mathbb{N}$, and let $G$ be a $K_{p}^-$-induced-minor-free graph with $\omega(G) = k$.
Let $q = \max\{2p-4,k\}+1$. We want to show that $G$ contains no $K_q$ as a minor. Suppose for a contradiction that $G$ contains $K_q$ as a minor. Fix a minor model $M = (X_u: u\in V(K_q))$ of $K_q$ in $G$ such that the total number of vertices in the bags, that is, the sum $\sum_{u\in V(K_q)}|X_u|$, is minimized.

If for all $u\in V(K_q)$ we have $|X_u| = 1$, then the set $\bigcup_{u\in V(K_q)}X_u$ is a clique in $G$, implying that $\omega(G)\ge |V(K_q)| = q \ge k+1$, a contradiction.
Therefore, there exists some $u\in V(K_q)$ such that $|X_u| \ge 2$. Furthermore, note that for every vertex $y\in X_u$ there exists a vertex $v(y)$ of $K_q-u$ such that $y$ has no neighbors in $X_{v(y)}$, since otherwise replacing the bag $X_u$ with $\{y\}$ would result in a minor model of $K_q$ smaller than $M$.
Since $|X_u|\ge 2$ and the subgraph of $G$ induced by $X_u$ is connected, there exists a vertex $x\in X_u$ such that the subgraph of $G$ induced by $X_u\setminus \{x\}$ is connected. (For example, take $x$ to be a leaf of a spanning tree of $G[X_u]$.)

Let $Z$ be the set of vertices $z\in V(K_q)\setminus \{u\}$ such that $x$ has a neighbor in $X_z$.
Suppose first that $|Z|\ge (q-1)/2$. Recall that $X_{v(x)}$ is a bag in which $x$ has no neighbor.
In particular, $v(x)\neq u$ and $v(x)\not\in Z$. Then, the bags from $(X_z: z\in Z)$ along with $\{x\}$ and $X_{v(x)}$ form an induced minor model of $K_{|Z|+2}^-$.
Since $|Z|+2 \ge (q-1)/2+2\ge (2p-4)/2+2= p$, we obtain a contradiction with the fact that $G$ is $K_{p}^-$-induced-minor-free.

Finally, suppose that $|Z|<(q-1)/2$.
The minimality of $M$ implies that $Z$ is nonempty and for some $w\in Z$ we have $\left(\bigcup_{v \in X_{w}} N(v)\right) \cap X_u = \{x\}$.
Let $Z' = V(K_q)\setminus(Z\cup \{u\})$.
Note that for every vertex $z\in Z'$ there exists an edge from $X_z$ to $X_u\setminus\{x\}$.
Since $|Z|+|Z'| = q-1$ and $|Z|<(q-1)/2$, we have $|Z'|\ge (q-1)/2$.
Furthermore, $w\in Z$ and hence $w\not\in Z'$.
Thus, the bags from $(X_z: z\in Z')$ along with $X_u\setminus \{x\}$ and $X_{w}$ form an induced minor model of $K_{|Z'|+2}^-$, leading again to a contradiction with the fact that $G$ is $K_{p}^-$-induced-minor-free.
\end{proof}

Similar but more involved arguments show that large Hadwiger number implies large clique number also in the class of $W_4$-induced-minor-free graphs.
In the proof of the next theorem we will need the following standard notion: A vertex $u$ in a graph $G$ is said to be \emph{universal} if it is adjacent to all other vertices of $G$, that is, $d_G(u) = |V(G)|-1$.

\begin{theorem}\label{lemma:W_4 im-free}
The class of $W_4$-induced-minor-free graphs is $(\h,\omega)$-bounded with a binding function $f(k) = k+5$.
\end{theorem}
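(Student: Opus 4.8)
The plan is to prove the contrapositive in the form: if $G$ is $W_4$-induced-minor-free, then $\h(G)\le\omega(G)+5$. Writing $k=\omega(G)$ and $q=k+6$, it suffices to show that whenever $K_q$ is a minor of $G$, the graph $G$ contains $W_4$ as an induced minor. So, exactly as in the proof of \cref{lemma:Kn-1 im-free is (h omega)-bounded}, I fix a minor model $M=(X_u:u\in V(K_q))$ of $K_q$ minimizing $\sum_u|X_u|$, and aim to exhibit an induced minor model of $W_4$. It is useful to recall that $W_4$ is $K_5$ with two disjoint edges removed, so the task is to create \emph{two disjoint non-edges} among five bags while keeping one bag adjacent to all others.

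A counting observation replaces the ``halving'' trick used for $K_p^-$: the singleton bags of $M$ are pairwise adjacent, hence form a clique, so there are at most $k$ of them, and as $q=k+6$ at least six bags satisfy $|X_u|\ge 2$. For any such non-trivial bag $X_u$ I use minimality as before. Choosing $x\in X_u$ to be a leaf of a spanning tree of $G[X_u]$ keeps $X_u\setminus\{x\}$ connected, and minimality guarantees both a bag $X_v$ with $N(x)\cap X_v=\emptyset$ (otherwise replace $X_u$ by $\{x\}$) and, writing $Z=\{w\ne u: N(x)\cap X_w\ne\emptyset\}$ and $Z_1=\{w\in Z:\ X_u\setminus\{x\}\text{ has no neighbor in }X_w\}$, that $Z_1\ne\emptyset$ (otherwise $X_u\setminus\{x\}$ is adjacent to every other bag and replaces $X_u$). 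Setting $\bar Z=V(K_q)\setminus(Z\cup\{u\})$, every bag of $\bar Z$ is adjacent to $X_u\setminus\{x\}$ (its edge to $X_u$ must avoid $x$), and $\bar Z\ne\emptyset$ because $v\in\bar Z$.

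The heart of the construction is the case $Z\setminus Z_1\ne\emptyset$, where a \emph{single} split bag suffices: both halves realise the two non-edges of the rim. Choose $w_3\in Z_1$, $w_4\in Z\setminus Z_1$, and any $v\in\bar Z$; these indices lie in pairwise disjoint sets, so the five bags
\[
 c=X_{w_4},\qquad a=\{x\},\qquad b=X_{w_3},\qquad d=X_v,\qquad e=X_u\setminus\{x\}
\]
are pairwise disjoint and each induces a connected subgraph. Then $a,b,d,e$ form an induced $4$-cycle: the edges $ab$ (since $w_3\in Z$), $bd$ (distinct bags of the model are adjacent), $de$ (since $v\in\bar Z$), and $ea$ (as $x$ has a neighbor in $X_u\setminus\{x\}$) are present, while $ad$ is a non-edge (as $N(x)\cap X_v=\emptyset$) and $be$ is a non-edge (as $w_3\in Z_1$). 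Finally $c$ is adjacent to all four: to $a=\{x\}$ since $w_4\in Z$, to $b$ and $d$ since distinct bags of the $K_q$-model are adjacent, and to $e=X_u\setminus\{x\}$ since $w_4\notin Z_1$. This is an induced minor model of $W_4$, a contradiction. All three sets $Z_1$, $Z\setminus Z_1$, and $\bar Z$ are nonempty here (the first and third always, the second by the case assumption), so the required indices exist.

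It remains to treat the bad case $Z=Z_1$, which I expect to be the main obstacle. Here $\{x\}$ is adjacent only to bags of $Z$ and $X_u\setminus\{x\}$ only to bags of $\bar Z$, so \emph{no} original bag is adjacent to both halves and the center $c$ above cannot be found; in fact one checks that the reduced ``bag graph'' on $\{x\},X_u\setminus\{x\}$ together with $Z$ and $\bar Z$ has no induced $W_4$, so a genuinely different idea is needed. The plan is to spend the slack from the counting step: since at least six bags are non-trivial, there is a non-trivial bag $X_s\ne X_u$, which I would split as well to obtain a second non-edge through its own private bag $B_s$. Arranging $X_u\setminus\{x\}$, $X_s\setminus\{x_s\}$, $B_s$, and a bag of $\bar Z$ as the rim, so that the two created non-edges sit on the two diagonals, leaves the remaining cross-edges and the center as honest adjacencies between whole bags of the $K_q$-model, which survive once $x$ and $x_s$ are chosen to avoid being the unique ports along the few edges that must be preserved. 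Making this selection work uniformly — distinguishing whether $X_s$ lies in $Z$ or in $\bar Z$, and eliminating the degenerate situation in which every bag attaches to $X_u$ through a single vertex — is the delicate part of the argument and the place where having six non-trivial bags rather than a single one is essential.
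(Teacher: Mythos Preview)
Your setup and the ``good case'' $Z\setminus Z_1\neq\emptyset$ are correct and clean: the five bags you exhibit do form an induced minor model of $W_4$, and the counting argument showing at least six non-trivial bags is valid. However, the ``bad case'' $Z=Z_1$ is not proved. You acknowledge this yourself, but the sketch you give is not obviously completable. The difficulty is real: once a second bag $X_s$ is split at some leaf $x_s$, you must control the adjacencies between each of $\{x\},X_u\setminus\{x\},\{x_s\},X_s\setminus\{x_s\}$ and each other, and there is no a priori reason these four pieces line up into a $4$-cycle with a common neighbour among the remaining bags. In particular, if $X_s$ also falls into its own bad case, the split of $X_s$ only tells you how $\{x_s\}$ and $X_s\setminus\{x_s\}$ relate to \emph{whole} bags $X_w$, not to the pieces $\{x\}$ and $X_u\setminus\{x\}$; conversely, the split of $X_u$ only controls adjacencies to the whole $X_s$. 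So neither split gives you information about the four cross-adjacencies you actually need, and the phrase ``chosen to avoid being the unique ports along the few edges that must be preserved'' hides a genuine selection problem that your slack of six non-trivial bags does not obviously resolve.

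The paper takes a different route that sidesteps this case entirely. Rather than attacking a $K_q$-minor model directly, it proves the intermediate structural statement that every $W_4$-induced-minor-free graph with $\omega(G)=k$ is $K_q^-$-\emph{induced}-minor-free for $q=k+5$, and then invokes \cref{lemma:Kn-1 im-free is (h omega)-bounded}. The argument fixes an induced minor model of $K_q^-$ minimizing only the total size of the bags corresponding to the \emph{universal} vertices of $K_q^-$, splits one such bag into two connected pieces $X_{u_1},X_{u_2}$, and analyses the resulting graph $F'$. Because the two non-universal vertices $x,y$ of $K_q^-$ are already present as a built-in non-edge, only one further non-edge needs to be manufactured from the split, and the case analysis (on whether the neighbourhoods of $u_1,u_2$ in $U'$ are comparable) is short and complete. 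Working with an induced minor model rather than a minor model, and with $K_q^-$ rather than $K_q$, is what makes the analogue of your bad case disappear: every remaining vertex of $U'$ is automatically adjacent to both $x$ and $y$, so a centre for the wheel is always available.
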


\begin{proof}
Fix a positive integer $k$, and let $G$ be a $W_4$-induced-minor-free graph with $\omega(G) = k$.
Let $q = k+5$ (note that $q \geq 6$) and $F$ be the graph $K_q^-$.
We claim that $G$ does not contain $F$ as an induced minor.
We denote by $U \subset V(F)$ the set of universal vertices in $F$.
To derive a contradiction, suppose that $G$ contains $F$ as an induced minor, and fix an induced minor model $M = (X_u : u \in V(F))$ of $F$ in $G$ such that the size of $\bigcup_{u \in U} X_u$ is minimized. We will refer to this condition as \emph{property~$(\mystar)$}.
We denote by $x$ and $y$ the two nonadjacent vertices in $F$.
It is clear that if for all $u \in U$ we have $|X_u| = 1$, then the set $\bigcup_{u \in U} X_u$ is a clique in $G$, a contradiction since $|U| = q-2 > k= \omega(G)$.
Hence, there exists a vertex $u \in U$ such that $|X_u| \geq 2$.

Partition the bag $X_u$ arbitrarily into two nonempty bags $X_{u_1}$ and $X_{u_2}$, both inducing a connected subgraph in $G$.
(For example, we can take $\ell$ to be a leaf of a spanning tree of $G[X_u]$ and set
$X_{u_1} = \{\ell\}$ and $X_{u_2} = X_u \setminus \{\ell\}$.)
Let $M'$ be the collection of bags obtained from $M$ by removing the bag $X_u$ and adding the bags $X_{u_1}$
and $X_{u_2}$. Let $F'$ be the graph obtained from the subgraph of $G$ induced by the union of bags in $M'$
by contracting each of the bags in $M'$ into a single vertex.
Note that the vertex set of $F'$ is $(V(F)\setminus\{u\})\cup\{u_1,u_2\}$ and that $M'$ is an induced minor model of $F'$ in $G$.
In particular, $F'$ is an induced minor of $G$. Notice that $u_1$ and $u_2$ are adjacent in $F'$.
Let $U' = U\setminus\{u\}$ and observe that $U'\subseteq V(F')$.
Note that $d_{F'}(u_1) \geq 2$; otherwise $u_1$ would only be adjacent to $u_2$, and thus we could replace $X_u$ with $X_{u_2}$ in $M$ to obtain an induced minor model of $F$ in $G$ that would contradict the fact that $M$ satisfies property~$(\mystar)$.
For the same reason, $d_{F'}(u_2) \geq 2$.

Suppose first that $d_{F'}(u_1) = 2$. Let $v$ be the neighbor of $u_1$ different from $u_2$.
If $v = x$, then we could redefine $X_{u} = X_{u_2}$ and $X_{x} = X_{x} \cup X_{u_1}$ in $M$ to obtain an induced minor model of $F$ in $G$ showing that $M$ does not respect property~$(\mystar)$. Thus, $v\neq x$. Similarly, $v\neq y$. Consequently, $v\in U'$.
The fact that $M$ satisfies property~$(\mystar)$ also implies that $v$ is not adjacent to $u_2$.
Since $|U'| = |U|-1 = q-3 > 2$, there is a vertex $w\in U'\setminus\{v\}$.
Note that $w$ is not adjacent to $u_1$ and hence must be adjacent to $u_2$.
We obtain that $\{x,u_2,y,v\}$ induces a $C_4$ in $F'$ and $\{x,u_2,y,v\} \subseteq N(w)$.
Therefore, $G$ contains $W_4$  as an induced minor, a contradiction.
Thus, we have $d_{F'}(u_1) \ge 3$.
By symmetry, we also have $d_{F'}(u_2) \ge 3$.

For $i \in \{1,2\}$, let $A_i$ be the set of vertices in $U'$ adjacent to $u_i$.
By symmetry, it suffices to consider the following two cases depending on $A_1$ and $A_2$.
\begin{enumerate}[{Case} 1:]
    \item $A_1 \nsubseteq A_2$ and $A_2 \nsubseteq A_1$.\\
    Let $v \in A_1 \setminus A_2$ and $w \in A_2 \setminus A_1$.
    Notice that $v$ and $w$ are adjacent, and therefore $\{v,u_1,u_2,w\}$ induces a $C_4$ in $F'$.
    Suppose first that $u_1$ is adjacent to neither $x$ nor $y$.
    Then $u_2$ is adjacent to both $x$ and $y$.
    Furthermore, since $d_{F'}(u_1) \ge 3$, vertex $u_1$ must have a neighbor $z \in U' \setminus \{v,w\}$.
    Hence, every vertex in $\{v,u_1,u_2,w\}$ has a neighbor in the set $\{x,y,z\}$. Since
    $\{x,y,z\}$ induces a connected subgraph of $F'$, we infer that $W_4$ is an induced minor of $F'$ and thus of $G$, a contradiction.
    A similar conclusion is obtained if $u_2$ is adjacent to neither $x$ nor $y$.
    We may thus assume that $u_1$ is adjacent to either $x$ or $y$, and the same for $u_2$.
    Since $|U'| = |U|-1 = q-3 \geq 3$, there is a vertex $z\in U'\setminus\{v,w\}$.
    Again, since $\{x,y,z\}$ induces a connected subgraph of $F'$, we conclude that
    $W_4$ is an induced minor of $F$ and thus of $G$, a contradiction.
    See \cref{fig:induced C4 in U u1 u2} for an illustration.

  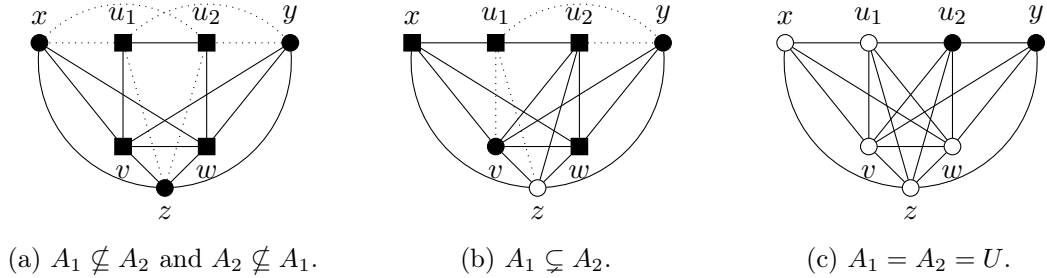
\begin{figure}[tbh]
  \centering
  \captionsetup[subfigure]{justification=centering}
  \begin{subfigure}[t]{0.3\textwidth}
  \centering
\begin{tikzpicture}[scale=0.55]
\tikzset{bag/.style={circle,
				inner sep=0pt,
				text width=4mm,
				align=center,
				draw=black,
				transform shape,
				fill=none}}
  \node[bag,label={[]above:$x$},fill=black] (x) at (-3,3) {};
  \node[bag,label={[]above:${u_1}$},regular polygon,regular polygon sides=4,fill=black,scale=0.7] (u1) at (-1,3) {};
  \node[bag,label={[]above:${u_2}$},regular polygon,regular polygon sides=4,fill=black,scale=0.7] (u2) at (1,3) {};
  \node[bag,label={[]above:$y$},fill=black] (y) at (3,3) {};
  \node[bag,label={[]below:$v$},regular polygon,regular polygon sides=4,fill=black,scale=0.7] (w) at (-1,.5) {};
  \node[bag,label={[]below:$w$},regular polygon,regular polygon sides=4,fill=black,scale=0.7] (w') at (1,.5) {};
  \node[bag,label={[]below:$z$},fill=black] (z) at (0,-.5) {};

  \draw (w) to (z) to (w');
  \draw (x) to [bend right=45] (z);
  \draw (y) to [bend left=45] (z);
  \draw (w) to (w');
  \draw (x) to (w);
  \draw (x) to (w');
  \draw (y) to (w);
  \draw (y) to (w');

  \draw (u1) to (u2);

  \draw (u1) to (w);

  \draw (u2) to (w');

  \draw[dotted] (u1) to (x);
  \draw[dotted] (x) to [bend left=45] (u2);
  \draw[dotted] (u1) to [bend left=45] (y);
  \draw[dotted] (y) to (u2);
  \draw[dotted] (u1) to (z);
  \draw[dotted] (z) to (u2);
\end{tikzpicture}
    \caption{$A_1 \nsubseteq A_2$ and $A_2 \nsubseteq A_1$.}\label{fig:induced C4 in U u1 u2}
  \end{subfigure}
  \begin{subfigure}[t]{0.3\textwidth}
    \centering
\begin{tikzpicture}[scale=0.55]
\tikzset{bag/.style={circle,
				inner sep=0pt,
				text width=4mm,
				align=center,
				draw=black,
				transform shape,
				fill=none}}
  \node[bag,label={[]above:$x$},regular polygon,regular polygon sides=4,fill=black,scale=0.7] (x) at (-3,3) {};
  \node[bag,label={[]above:${u_1}$},regular polygon,regular polygon sides=4,fill=black,scale=0.7] (u1) at (-1,3) {};
  \node[bag,label={[]above:${u_2}$},regular polygon,regular polygon sides=4,fill=black,scale=0.7] (u2) at (1,3) {};
  \node[bag,label={[]above:$y$},fill=black] (y) at (3,3) {};
  \node[bag,label={[]below:$v$},fill=black] (v) at (-1,.5) {};
  \node[bag,label={[]below:${w}$},regular polygon,regular polygon sides=4,fill=black,scale=0.7] (w) at (1,.5) {};
  \node[bag,label={[]below:${z}$}] (z) at (0,-.5) {};

  \draw (v) to (z);
  \draw (z) to (w);
  \draw (x) to [bend right=45] (z);
  \draw (y) to [bend left=45] (z);
  \draw (v) to (w);
  \draw (x) to (v);
  \draw (x) to (w);
  \draw (y) to (v);
  \draw (y) to (w);

  \draw (x) to (u1);
  \draw[dotted] (u1) to [bend left=45] (y);
  \draw (u1) to (u2);

  \draw (u2) to (v);
  \draw (u2) to (w);
  \draw (u2) to (z);
  \draw[dotted] (u1) to (v);

  \draw[dotted] (u1) to (z);
  \draw[dotted] (u2) to (y);
\end{tikzpicture}
    \caption{$A_1 \subsetneq A_2$.}\label{fig:N_U(u1) subset N_U(u2)}
  \end{subfigure}
  \begin{subfigure}[t]{0.3\textwidth}
    \centering
\begin{tikzpicture}[scale=0.55]
\tikzset{bag/.style={circle,
				inner sep=0pt,
				text width=4mm,
				align=center,
				draw=black,
				transform shape,
				fill=none}}
  \node[bag,label={[]above:$x$},fill=none] (x) at (-3,3) {};
  \node[bag,label={[]above:${u_1}$},fill=none] (u1) at (-1,3) {};
  \node[bag,label={[]above:${u_2}$},fill=black] (u2) at (1,3) {};
  \node[bag,label={[]above:$y$},fill=black] (y) at (3,3) {};
  \node[bag,label={[]below:$v$},fill=none] (v) at (-1,.5) {};
  \node[bag,label={[]below:${w}$},fill=none] (w) at (1,.5) {};
  \node[bag,label={[]below:${z}$},fill=none] (z) at (0,-.5) {};

  \draw (z) to (u1);
  \draw (z) to (u2);
  \draw (u1) to (w);
  \draw (u2) to (v);
  \draw (x) to (u1);
  \draw (y) to (u2);
  \draw (x) to (v);
  \draw (x) to (w);
  \draw (y) to (v);
  \draw (y) to (w);
  \draw (z) to (v);
  \draw (z) to (w);
  \draw (u1) to (u2);
  \draw (v) to (w);
  \draw (u1) to (v);
  \draw (u2) to (w);
  \draw (x) to [bend right=45] (z);
  \draw (y) to [bend left=45] (z);
\end{tikzpicture}
    \caption{$A_1 = A_2 = U$.}\label{fig:u1 and u2 are fully connected to U}
  \end{subfigure}

 \caption{Representation of the different cases considered in the proof of \cref{lemma:W_4 im-free}. The induced minor contains all plain edges and is a subgraph of the graph induced by plain and dotted edges. Black squared vertices induce a $C_4$ and black round vertices are merged into a single vertex.}
\end{figure}

   \item $A_1 \subseteq A_2$.\\
Necessarily, $A_2 = U'$, and hence $u_2$ cannot be adjacent to both $x$ and $y$, otherwise $M$ would not satisfy property~$(\mystar)$.
Without loss of generality, assume that $u_2$ is not adjacent to $x$.
Then $u_1$ is adjacent to $x$.

    Suppose first that $A_1$ is a proper subset of $A_2$.
    Then there exists a vertex $w\in A_2\setminus A_1$.
    Note that the vertices $\{x,u_1,u_2,w\}$ induce a $C_4$ in $F'$.
    We claim that $A_1 = \emptyset$.
    Indeed, suppose for a contradiction that there exists a vertex $v\in A_1$.
    Then $v\neq w$ and $v$ is universal in $F'$. Therefore, $F'$ contains an induced copy of $W_4$ in $F'$ with vertex set $\{x,u_1,u_2,w,v\}$; in particular, $W_4$ is an induced minor of $G$, a contradiction.
    Thus, $A_1 = \emptyset$, as claimed.
    This means that $u_1$ does not have any neighbors in $U'$, and hence using the inequality  $d_{F'}(u_1)\ge 3$ we infer that $N_{F'}(u_1) = \{u_2,x,y\}$.
    Recall that $|U'| \geq 3$.
    Choose any vertex $z \in U' \setminus \{w\}$.
    Note that every vertex in $F'$ is adjacent to either $y$ or $z$. In particular, since $\{y,z\}\cap \{x,u_1,u_2,w\}= \emptyset$ and
    $\{y,z\}$ induces a connected subgraph of $F'$, we conclude that $W_4$ is an induced minor of $F$, and thus of $G$, a contradiction.
    See \cref{fig:N_U(u1) subset N_U(u2)} for an illustration.

    We may thus assume that $A_1 = A_2$.
    The fact that $M$ satisfies property~$(\mystar)$ implies that $u_1$ cannot be adjacent to both $x$ and $y$. Thus, since $u_1$ is adjacent to $x$, it is not adjacent to $y$. Consequently, $u_2$ is adjacent to $y$.
    Now, observe that the graph obtained by contracting the edge $\{u_2,y\}$ in $F'$ is isomorphic to $F$.
    Hence, we can modify $M$ by redefining $X_{u} = X_{u_1}$ and $X_y := X_y \cup X_{u_2}$ and get a minor model of $F$.
    However, this implies that $M$ does not respect property~$(\mystar)$, a contradiction.
    See \cref{fig:u1 and u2 are fully connected to U} for an illustration.
  \end{enumerate}
  We conclude that $G$ is $K_q^-$-induced-minor-free, and following \cref{lemma:Kn-1 im-free is (h omega)-bounded} we obtain that $\h(G) \leq \max\{2q-5,k\} = 2k+1$.
\end{proof}

\subsection{Back to treewidth}

As explained by \citeauthor{MR3741534}~\cite{MR3741534} (and observed also in~\cite{MR3853110}), the following fact can be derived from the proof of Theorem 9 in~\cite{MR2901091}.

\begin{theorem}\label{thm:FH}
For every graph $F$ and every planar graph $H$, the class of graphs that are both $F$-minor-free and $H$-induced-minor-free has bounded treewidth.
\end{theorem}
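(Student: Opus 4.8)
The plan is to prove the contrapositive in the following quantitative form: if $G$ is $F$-minor-free, then either $\tw(G)$ is bounded by a constant depending only on $F$ and $H$, or $G$ contains $H$ as an induced minor. Since every graph in the class is also $H$-induced-minor-free, only the first alternative can occur, which yields a uniform treewidth bound for the whole class. First I would pass from the minor hypothesis to a bound on the Hadwiger number. Writing $t = |V(F)|$, note that $F$ is a subgraph of $K_t$, so any graph containing $K_t$ as a minor also contains $F$ as a minor; hence $F$-minor-freeness implies $K_t$-minor-freeness, that is, $\h(G) \le t-1$. It therefore suffices to show that for every planar $H$ and every $t$ there is a constant $w = w(t,H)$ such that every graph $G$ with $\h(G) \le t-1$ and $\tw(G) \ge w$ contains $H$ as an induced minor.

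Next I would isolate two structural ingredients. (i) Since $H$ is planar, there is an integer $r = r(H)$ such that $H$ is an induced minor of the $r \times r$ grid (equivalently, of a sufficiently large elementary wall): fixing a planar embedding of $H$, one places the branch sets on a coarse subgrid according to the embedding and routes the edge-paths through the remaining grid lines, using planarity to keep the paths from crossing; leaving empty rows and columns between distinct bags and paths prevents any spurious adjacency, so the model is induced. (ii) A large grid (or subdivided wall) can be extracted as an induced minor from a graph of large treewidth and bounded Hadwiger number, and this is where both hypotheses are used together. By the Flat Wall Theorem of Robertson and Seymour, a graph with no $K_t$-minor and treewidth at least a suitable $w(t,r)$ contains a flat wall of height $r$ after deleting a set $A$ of at most $t-1$ apex vertices. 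Because we are constructing an induced minor, we may simply delete $A$ and work inside the induced subgraph $G-A$; an induced minor of $G-A$ is an induced minor of $G$.

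I would then use flatness to turn the flat wall into an induced grid (equivalently, subdivided-wall) minor of $G-A$. Flatness means that the compass of the wall admits a planar embedding in a disc with the nails of the wall appearing in the prescribed cyclic order on the boundary, so every edge interior to the flat region respects this planar structure. Choosing the bags to be the bricks of a sufficiently fine subwall and deleting everything outside the chosen region, the planarity of the compass guarantees that two bags are adjacent exactly when the corresponding cells share a side; this produces the host grid of ingredient (i) as an \emph{induced} minor of $G-A$. Composing the two induced-minor models then yields $H \subseteq_{im} G-A$, and hence $H \subseteq_{im} G$, as required.

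I expect the main obstacle to be precisely this last step: controlling the chords and internal edges of the flat region so that the extracted grid or wall is genuinely an \emph{induced} minor rather than merely a minor. Bounding the Hadwiger number and invoking the grid theorem only yield a grid minor; the entire difficulty, and the reason planarity of $H$ and flatness of the wall are both essential, lies in ruling out the extra adjacencies that could otherwise arise inside the flat wall and through the deleted apex set. Managing these adjacencies, by selecting the subwall coarsely enough relative to the bounded number of apices and the planar structure of the compass, is the technical heart of the argument, and it is exactly what the proof of Theorem 9 in~\cite{MR2901091} provides.
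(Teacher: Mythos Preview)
The paper does not actually prove this theorem: it is stated without proof and attributed to the proof of Theorem~9 in~\cite{MR2901091}, as explained in~\cite{MR3741534}. So there is no ``paper's own proof'' to compare against; the paper treats \cref{thm:FH} as a black box imported from the literature.

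Your proposal is a faithful high-level reconstruction of exactly that imported argument: reduce $F$-minor-freeness to $K_t$-minor-freeness, invoke the Flat Wall Theorem to obtain a large flat wall after deleting a bounded apex set, use flatness to extract a large grid as an \emph{induced} minor, and finally realise the planar $H$ as an induced minor of that grid. You also correctly identify the genuine difficulty (upgrading the ordinary grid minor to an induced one inside the flat region) and correctly point to~\cite{MR2901091} as the place where this is carried out. In that sense your write-up is not an alternative proof but an expansion of the citation the paper already makes, and it is consistent with it. One small caution: the claim in your step~(i) that every planar graph is an induced minor of a sufficiently large grid is true but is itself not entirely trivial and deserves a reference or a more careful justification; it is part of what the cited proof establishes rather than something one can take for granted.
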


Since excluding a complete graph as a minor is the same as excluding it as an induced minor, \cref{thm:FH} implies the following.

\begin{corollary}\label{lemma:Kn and planar H imf implies bounded tw}
For every positive integer $p$ and every planar graph $H$, the class of $\{K_{p},H\}$-induced-minor-free graphs has bounded treewidth.
\end{corollary}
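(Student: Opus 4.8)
The plan is to derive the statement directly from \cref{thm:FH} by choosing $F = K_p$, so the only thing I need to supply is the elementary observation that, for a complete graph, being $K_p$-minor-free and being $K_p$-induced-minor-free are the same condition. Granting this, a graph is $\{K_p, H\}$-induced-minor-free precisely when it is both $K_p$-minor-free and $H$-induced-minor-free, and \cref{thm:FH} then yields bounded treewidth at once.

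Thus the first (and essentially only) step is to prove that $K_p \subseteq_m G$ if and only if $K_p \subseteq_{im} G$. One direction, $K_p \subseteq_{im} G \Rightarrow K_p \subseteq_m G$, is just one of the containment implications recorded in \cref{sec:prelim}. For the converse I would take an arbitrary minor model $(X_u : u \in V(K_p))$ of $K_p$ in $G$ and verify that it is already an induced minor model. The minor-model conditions guarantee an edge of $G$ between $X_u$ and $X_v$ whenever $u$ and $v$ are adjacent in $K_p$; since $K_p$ is complete this produces an edge between \emph{every} pair of bags, which is exactly what an induced minor model of $K_p$ demands. The extra requirement of an induced minor model---no edge between bags corresponding to nonadjacent vertices---is vacuous here, because $K_p$ has no nonadjacent pair of vertices. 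Hence the minor model is automatically an induced minor model, and the equivalence follows.

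I do not expect any genuine obstacle: the whole result is a one-line reduction to \cref{thm:FH} together with the above observation, which exploits the fact that the forbidden pattern is complete and therefore imposes no ``missing-edge'' constraints in the induced-minor setting. All the substantive work is contained in \cref{thm:FH}, and the present step merely transfers an excluded-minor hypothesis on $K_p$ into the excluded-induced-minor framework required to apply it.
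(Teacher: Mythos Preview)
Your proposal is correct and follows precisely the paper's approach: the paper states the corollary as an immediate consequence of \cref{thm:FH} together with the observation that ``excluding a complete graph as a minor is the same as excluding it as an induced minor,'' which is exactly the equivalence you justify via the vacuous nonadjacency condition.
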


Observe that no graph $G$ contains $K_{\eta(G)+1}$ as a minor (or, equivalently, as an induced minor).

\begin{corollary}\label{corollary:hadwiger-planar}
Let $H$ be a planar graph. The class of $H$-induced-minor-free graphs is $(\h,\omega)$-bounded if and only if it is $(\tw,\omega)$-bounded.
\end{corollary}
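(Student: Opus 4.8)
The plan is to prove the two implications separately. One of them holds for every graph class and follows from a standard monotonicity fact, while the other is where planarity of $H$ and the forbidden induced minor come into play, via \cref{lemma:Kn and planar H imf implies bounded tw}. Throughout, I use that the class of $H$-induced-minor-free graphs is hereditary: every induced subgraph of such a graph is again $H$-induced-minor-free, since an induced minor of an induced subgraph is an induced minor of the original graph. Consequently, in both definitions the quantification over induced subgraphs reduces to quantification over members of the class, and it suffices to bound $\tw(G)$, respectively $\h(G)$, in terms of $\omega(G)$ for graphs $G$ in the class itself.

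For the direction that $(\tw,\omega)$-boundedness implies $(\h,\omega)$-boundedness, I would argue with no appeal to planarity of $H$. If $K_p$ is a minor of a graph $G$, then, since treewidth does not increase under taking minors, $\tw(G) \ge \tw(K_p) = p-1$; taking $p = \h(G)$ gives $\h(G) \le \tw(G)+1$ for every graph $G$. Hence, if $f$ is a $(\tw,\omega)$-binding function for the class, then $g(k) := f(k)+1$ is an $(\h,\omega)$-binding function, establishing this implication.

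For the converse, suppose the class is $(\h,\omega)$-bounded with binding function $g$. Fix $k\in\N$ and let $G$ be an $H$-induced-minor-free graph with $\omega(G) = k$. Then $\h(G) \le g(k)$, so $G$ contains no $K_{g(k)+1}$ as a minor. As noted before \cref{lemma:Kn and planar H imf implies bounded tw}, excluding a complete graph as a minor coincides with excluding it as an induced minor, so $G$ is $\{K_{g(k)+1}, H\}$-induced-minor-free. Applying \cref{lemma:Kn and planar H imf implies bounded tw} with $p = g(k)+1$ and the planar graph $H$ yields a constant $c(g(k)+1, H)$ that bounds $\tw(G)$. Setting $f(k) := c(g(k)+1, H)$ then gives a $(\tw,\omega)$-binding function for the class, which completes the equivalence.

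There is no genuinely hard step here, since the substantive structural content is already packaged in \cref{lemma:Kn and planar H imf implies bounded tw}; the only point requiring care is the uniformity of the argument over all induced subgraphs. I handle this by exploiting heredity of the class and, crucially, by letting the forbidden complete graph $K_{g(k)+1}$ depend only on the fixed value $k = \omega(G)$ rather than on $G$ itself, so that for each $k$ a single constant controls the treewidth of all relevant graphs.
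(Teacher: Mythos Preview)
Your proof is correct and follows essentially the same approach as the paper for the nontrivial direction (from $(\h,\omega)$-bounded to $(\tw,\omega)$-bounded), invoking \cref{lemma:Kn and planar H imf implies bounded tw} after bounding the Hadwiger number. The paper omits the easy direction you spell out via $\h(G)\le \tw(G)+1$, treating it as implicit.
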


\begin{proof}
Suppose that the class of $H$-induced-minor-free graphs is $(\h,\omega)$-bounded, and let $f$ be a $(\h,\omega)$-binding function for the class.
Let $k\in \mathbb{N}$, and let $G$ be an $H$-induced-minor-free graph with $\omega(G) = k$.
Then $\h(G)\le f(k)$, that is, $G$ is $K_{f(k)+1}$-induced-minor-free.
By \cref{lemma:Kn and planar H imf implies bounded tw}, the treewidth of $G$ can be bounded from above by some constant $g(k)$ depending only on $k$.
Thus, $g$ is a $(\tw,\omega)$-binding function for the class.
\end{proof}

From \cref{lemma:Kn-1 im-free is (h omega)-bounded} we obtain that the class of $K_5^-$-induced-minor-free graphs is $(\h,\omega)$-bounded.
Since $K_5^-$ is planar, a direct application of \cref{corollary:hadwiger-planar} implies the following result.

\begin{corollary}\label{cor:K_5-}
The class of $K_5^-$-induced-minor-free graphs is $(\tw,\omega)$-bounded.
\end{corollary}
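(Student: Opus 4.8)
The plan is to derive the statement as an immediate specialization of the two preceding results to the case $H = K_5^-$. First I would apply \cref{lemma:Kn-1 im-free is (h omega)-bounded} with $p = 5$; this shows that the class of $K_5^-$-induced-minor-free graphs is $(\h,\omega)$-bounded, with the explicit binding function $f(k) = \max\{2p-4, k\} = \max\{6, k\}$. In other words, forbidding $K_5^-$ as an induced minor already forces the Hadwiger number to be controlled by the clique number, which is the substantive ingredient coming from the previous subsection.

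Next I would verify the planarity of $K_5^-$, which is the only hypothesis of \cref{corollary:hadwiger-planar} that still needs to be checked for this particular $H$. This is immediate: $K_5$ has crossing number one, so deleting one edge from a drawing realizing a single crossing yields a plane drawing of $K_5^-$. Hence $K_5^-$ is a planar graph, and the equivalence of \cref{corollary:hadwiger-planar} is applicable.

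Finally, I would invoke \cref{corollary:hadwiger-planar} with $H = K_5^-$. Since the class is $(\h,\omega)$-bounded by the first step and $K_5^-$ is planar by the second, the corollary converts $(\h,\omega)$-boundedness into $(\tw,\omega)$-boundedness, giving the claim. The underlying mechanism, already encapsulated in the cited results, is that an $(\h,\omega)$-binding function bounds the Hadwiger number by some $f(\omega)$, so a graph in the class with clique number $k$ is $K_{f(k)+1}$-minor-free; then \cref{lemma:Kn and planar H imf implies bounded tw}, itself a consequence of the treewidth bound in \cref{thm:FH}, bounds the treewidth in terms of the fixed complete graph $K_{f(k)+1}$ and the fixed planar graph $K_5^-$.

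I expect no genuine obstacle at this stage, since all of the real difficulty has been absorbed into the earlier statements: the minimal-minor-model analysis of \cref{lemma:Kn-1 im-free is (h omega)-bounded}, which controls how a large clique minor can be realized in a $K_5^-$-induced-minor-free graph, and the treewidth bound of \cref{thm:FH} for classes excluding simultaneously a fixed graph as a minor and a fixed planar graph as an induced minor. Given these, the corollary is a one-line combination, and I would present it as such rather than reproving anything.
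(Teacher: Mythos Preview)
Your proposal is correct and follows exactly the same route as the paper: apply \cref{lemma:Kn-1 im-free is (h omega)-bounded} with $p=5$ to obtain $(\h,\omega)$-boundedness, observe that $K_5^-$ is planar, and invoke \cref{corollary:hadwiger-planar}. The paper states this in one sentence preceding the corollary rather than writing a separate proof, so your level of detail is already more than sufficient.
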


Similarly, since $W_4$ is planar, we can directly apply \cref{lemma:W_4 im-free,corollary:hadwiger-planar} and obtain the following result.

\begin{corollary}\label{cor:W4}
    The class of $W_4$-induced-minor-free graphs is $(\tw,\omega)$-bounded.
\end{corollary}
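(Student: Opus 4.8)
The plan is to assemble the two results established in the preceding ``detour,'' since essentially all of the work has already been done there. The crucial observation is that $W_4$ is a planar graph: it is obtained from the cycle $C_4$ by adding a single vertex adjacent to all four cycle vertices, and this wheel admits an obvious planar embedding (place the hub at the center of the square). Consequently, $W_4$ falls squarely within the scope of \cref{corollary:hadwiger-planar}.

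First I would invoke \cref{lemma:W_4 im-free} to conclude that the class of $W_4$-induced-minor-free graphs is $(\h,\omega)$-bounded, with the explicit linear binding function $f(k) = k+5$; that is, every $W_4$-induced-minor-free graph $G$ satisfies $\h(G) \le \omega(G) + 5$. Then, since $W_4$ is planar, I would apply the equivalence in \cref{corollary:hadwiger-planar}, which transfers $(\h,\omega)$-boundedness to $(\tw,\omega)$-boundedness for classes excluding a fixed planar induced minor. Unpacking this, a bound on $\h(G)$ bounds the size of the largest clique minor in $G$, so $G$ is $K_{\omega(G)+6}$-induced-minor-free; then \cref{lemma:Kn and planar H imf implies bounded tw}, applied with the planar graph $H = W_4$, yields a bound on $\tw(G)$ depending only on $\omega(G)$. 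This exhibits a $(\tw,\omega)$-binding function for the class, as required.

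I do not anticipate any genuine obstacle at this stage: the nontrivial content resides entirely in \cref{lemma:W_4 im-free}, whose proof carries out the delicate case analysis showing that a large clique minor forces either a large clique or a $W_4$ induced minor, and in \cref{corollary:hadwiger-planar}, which in turn rests on \cref{thm:FH}. The present statement is purely the combinatorial assembly step, exactly parallel to how \cref{cor:K_5-} is derived from \cref{lemma:Kn-1 im-free is (h omega)-bounded}. The only point worth an explicit check is the planarity hypothesis required by \cref{corollary:hadwiger-planar}, and this is immediate for the wheel $W_4$.
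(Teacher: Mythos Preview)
Your proposal is correct and matches the paper's own argument essentially verbatim: the paper simply observes that $W_4$ is planar and then invokes \cref{lemma:W_4 im-free} together with \cref{corollary:hadwiger-planar}, exactly as you do, and it explicitly draws the same parallel with \cref{cor:K_5-}.
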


Our next result makes use of minimal separators. Given two nonadjacent vertices $u$ and $v$ in a graph $G$, a \emph{$u{,}v$-separator} in $G$ is a set $S$ of vertices such that $u$ and $v$ are in different connected components of $G-S$. A $u{,}v$-separator is \emph{minimal} if it does not contain any other $u{,}v$-separator. A \emph{minimal separator} in a graph $G$ is a minimal $u{,}v$-separator for some nonadjacent vertex pair $u,v$.
Given a graph $G$ and a set $S\subseteq V(G)$, an \emph{$S$-full component} of $G-S$ is a component $C$ of the graph $G-S$ such that every vertex in $S$ has a neighbor in $C$. The following lemma characterizing minimal separators is well known (see, e.g.,~\cite{MR2063679}).

\begin{lemma}\label{lem:separator}
A set $S$ of vertices in a graph $G$ is a minimal separator if and only if the graph $G-S$ has at least two $S$-full components.
\end{lemma}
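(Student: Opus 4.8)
The plan is to prove the two implications separately, relying throughout on two elementary observations about separators. First, $S$ is a $u{,}v$-separator exactly when $u$ and $v$ lie in distinct connected components of $G-S$. Second, if $u,v\notin B$ and some $A\subseteq B$ already separates $u$ from $v$, then $B$ separates them as well, since $G-B$ is a subgraph of $G-A$ and so a $u$-$v$ path in $G-B$ would also be one in $G-A$. This second fact lets me reduce the minimality condition ``no proper subset of $S$ is a $u{,}v$-separator'' to the single-vertex statement ``for every $s\in S$, the set $S\setminus\{s\}$ is not a $u{,}v$-separator,'' because any proper subset of $S$ is contained in some $S\setminus\{s\}$, and a separator cannot be enlarged (while avoiding $u,v$) into a non-separator.

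For the forward direction, I would start from a minimal $u{,}v$-separator $S$ and let $C_u$ and $C_v$ be the components of $G-S$ containing $u$ and $v$; these are distinct, so it suffices to show that both are $S$-full. Fix $s\in S$. By minimality, $S\setminus\{s\}$ is not a $u{,}v$-separator, so there is a $u$-$v$ path $P$ in $G-(S\setminus\{s\})$. This path $P$ must pass through $s$, for otherwise it would avoid all of $S$ and witness that $u$ and $v$ lie in the same component of $G-S$. Since $s$ occurs exactly once on the simple path $P$, every vertex of $P$ strictly before $s$ lies in $G-S$ and is joined to $u$ by a subpath avoiding $S$, hence lies in $C_u$; likewise the vertices after $s$ lie in $C_v$. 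In particular $s$ has a neighbor in each of $C_u$ and $C_v$ (taking $u$ itself or $v$ itself in the degenerate cases where $s$ is adjacent to $u$ or $v$). As $s\in S$ was arbitrary, both $C_u$ and $C_v$ are $S$-full, giving the two required full components.

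For the reverse direction, I would take two $S$-full components $C_1,C_2$ of $G-S$ and pick $u\in C_1$ and $v\in C_2$. These vertices are nonadjacent, since they lie in different components of $G-S$ and neither belongs to $S$, and $S$ is a $u{,}v$-separator by the first observation. To see it is minimal, fix $s\in S$: because each $C_i$ is $S$-full, $s$ has a neighbor $a\in C_1$ and a neighbor $b\in C_2$, and concatenating a path from $u$ to $a$ inside the connected set $C_1$, the edges $as$ and $sb$, and a path from $b$ to $v$ inside $C_2$ produces a $u$-$v$ path whose only vertex in $S$ is $s$. Hence $S\setminus\{s\}$ is not a $u{,}v$-separator, and by the reduction from the first paragraph no proper subset of $S$ separates $u$ from $v$. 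Thus $S$ is a minimal $u{,}v$-separator, and therefore a minimal separator.

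The routine but essential point to get right is the minimality bookkeeping in both directions, namely justifying that it suffices to test removal of a single vertex rather than arbitrary subsets, together with the careful placement of the two boundary vertices of $P$ into $C_u$ and $C_v$ in the forward direction. Beyond this care I expect no substantial obstacle, as the whole argument is purely combinatorial and uses only the connectivity of the components and the definition of $S$-fullness.
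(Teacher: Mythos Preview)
Your proof is correct. The paper does not actually prove this lemma; it states it as a well-known fact and refers the reader to the literature (specifically~\cite{MR2063679}). Your argument is the standard one and is complete as written: the reduction of minimality to single-vertex removals is properly justified, the path $P$ in the forward direction is correctly split at $s$ to place its predecessor and successor into $C_u$ and $C_v$, and in the reverse direction the constructed $u$-$v$ walk through $s$ certifies that $S\setminus\{s\}$ fails to separate. There is nothing to compare against, and nothing to fix.
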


\begin{theorem}[Skodinis \cite{MR1852483}]\label{bounded min sep implies bdd tw}
Let $s$ be a positive integer, and let $\mathcal{G}$ be the class of graphs in which all minimal separators have size at most $s$.
Then, $\mathcal{G}$ is $(\tw,\omega)$-bounded with a binding function $f(k) = \max \{k, 2s\}-1$.
\end{theorem}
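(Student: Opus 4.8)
The plan is to reduce the statement to a single-graph inequality together with a closure property. Concretely, I would prove (A) that for \emph{every} graph $G_0$ all of whose minimal separators have size at most $s$ we have $\tw(G_0)\le \max\{\omega(G_0),2s\}-1$, and (B) that the class $\mathcal{G}$ is closed under taking induced subgraphs. Granting both, any induced subgraph $G'$ of a graph in $\mathcal{G}$ again lies in $\mathcal{G}$, and (A) applied to $G'$ gives $\tw(G')\le \max\{\omega(G'),2s\}-1=f(\omega(G'))$, which is exactly $(\tw,\omega)$-boundedness with the claimed binding function. For (B) it suffices to treat a single vertex deletion $G-v$: given a minimal $a,b$-separator $T$ of $G-v$ with its two $T$-full components $C_a,C_b$, I would distinguish whether $v$ has a neighbor in both $C_a$ and $C_b$. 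If not, then $T$ is still a minimal $a,b$-separator of $G$; if so, then $T\cup\{v\}$ is, both conclusions being read off from \cref{lem:separator}. In either case $|T|\le s$.

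For (A) I would argue by induction on $|V(G_0)|$. If $G_0$ is complete then $\tw(G_0)=\omega(G_0)-1$ and we are done, so assume some pair $u,v$ is nonadjacent and fix a minimal $u,v$-separator $S$, so $|S|\le s$; the components $C_1,\dots,C_r$ of $G_0-S$ satisfy $r\ge 2$. For each $i$ let $G_i^+$ be the graph obtained from $G_0[S\cup C_i]$ by turning $S$ into a clique. Since $S$ is a clique in every $G_i^+$, each of their tree decompositions has a bag containing $S$, and gluing these along a shared bag equal to $S$ yields $\tw(G_0)\le \max_i \tw(G_i^+)$; moreover each $G_i^+$ has strictly fewer vertices than $G_0$ because $r\ge 2$. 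To close the induction I must verify (i) $\omega(G_i^+)\le \max\{\omega(G_0),2s\}$ and (ii) every minimal separator of $G_i^+$ again has size at most $s$.

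Step (i) is the heart of the matter and is where the factor $2$ is forced. A clique $Q$ of $G_i^+$ splits as $Q=(Q\cap C_i)\cup(Q\cap S)$. If $Q\cap S$ is a clique of $G_0$, then (using that the $C_i$–$S$ edges are genuine edges of $G_0$) $Q$ is a clique of $G_0$, so $|Q|\le\omega(G_0)$. Otherwise pick $a,b\in Q\cap S$ nonadjacent in $G_0$; every vertex of $Q\cap C_i$ is then a common $G_0$-neighbor of $a$ and $b$. The key observation is that \emph{every} $a,b$-separator must contain every common neighbor of $a$ and $b$ (the path $a\text{–}x\text{–}b$ has to be cut), so $Q\cap C_i$ is contained in a minimal $a,b$-separator of $G_0$ and hence $|Q\cap C_i|\le s$; together with $|Q\cap S|\le|S|\le s$ this gives $|Q|\le 2s$. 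Thus $\omega(G_i^+)\le\max\{\omega(G_0),2s\}$.

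The main obstacle I expect is step (ii): because $G_i^+$ is \emph{not} an induced subgraph of $G_0$ (we added the fill edges on $S$), closure under induced subgraphs does not apply, and one must verify directly that the clique-completion does not create large minimal separators. I would show that any minimal separator $T$ of $G_i^+$ is in fact a minimal separator of $G_0$: any $G_0$-path avoiding $T$ that wanders outside $S\cup C_i$ can be short-cut \emph{inside} $G_i^+$ across the clique edges of $S$, so $T$ still separates in $G_0$; conversely, each fill edge used by a $G_i^+$-path witnessing minimality can be rerouted in $G_0$ through a full component of $S$ distinct from $C_i$, which exists because $S$ is a minimal separator and hence has at least two full components by \cref{lem:separator}. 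This yields $|T|\le s$ and completes the induction, giving $\tw(G_i^+)\le\max\{\omega(G_0),2s\}-1$ for all $i$ and therefore the desired bound for $G_0$.
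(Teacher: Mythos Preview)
The paper does not prove this theorem; it is quoted from Skodinis~\cite{MR1852483} and used as a black box to derive \cref{K2n-im-free graphs have bdd tw}. So there is no in-paper argument to compare against. Your proposal supplies a self-contained proof, and it is correct.

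A couple of points worth tightening in step~(ii). First, when you reroute a $G_i^+$-path through a full component $C_j\neq C_i$, the result is a priori only a \emph{walk} in $G_0$; that is of course enough to witness that $T\setminus\{t\}$ does not separate, but say so. Second, you should note explicitly that the rerouting vertices lie in $C_j$, which is disjoint from $T$ because $T\subseteq S\cup C_i$; this is why the rerouted walk still avoids $T\setminus\{t\}$. With these two remarks, your two directions---short-cutting $G_0$-paths across the clique $S$ to show $T$ separates in $G_0$, and rerouting fill edges through $C_j$ to show minimality---are complete. (An equivalent way to organize minimality is via \cref{lem:separator}: since $S\setminus T$ is a clique in $G_i^+$, at most one of the two $T$-full components $D_a,D_b$ of $G_i^+-T$ meets $S$; the other, say $D_b$, lies entirely in $C_i$ and is therefore already a $T$-full component of $G_0-T$, while $D_a\cup C_j$ is a $T$-full set in $G_0-T$ disjoint from $D_b$.)

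Step~(i), where the $2s$ appears, is exactly right: the key is that any two vertices of $S$ that are nonadjacent in $G_0$ have all their common $G_0$-neighbours trapped inside every (hence some minimal) separator between them, bounding $|Q\cap C_i|\le s$, while $|Q\cap S|\le |S|\le s$. Your closure argument (B) for induced subgraphs is also fine; in the case where $v$ sees both full components you even get $|T|\le s-1$.
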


Using \cref{bounded min sep implies bdd tw}, we infer our next result.

\begin{lemma}\label{K2n-im-free graphs have bdd tw}
  For every $q \in \mathbb{N}$, the class of $K_{2,q}$-induced-minor-free graphs is $(\tw,\omega)$-bounded with a
  binding function $f(k) = \max \{k, 2R(k+1,q) -2\}-1$.
\end{lemma}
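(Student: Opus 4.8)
The plan is to bound the size of minimal separators in terms of the clique number and then invoke Skodinis' theorem (\cref{bounded min sep implies bdd tw}). Concretely, I would show that in any $K_{2,q}$-induced-minor-free graph $G$ with $\omega(G) = k$, every minimal separator has at most $R(k+1,q)-1$ vertices. Setting $s = R(k+1,q)-1$ in \cref{bounded min sep implies bdd tw} then yields exactly the claimed binding function $\max\{k, 2s\}-1 = \max\{k, 2R(k+1,q)-2\}-1$, so the whole statement reduces to this single separator bound.

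To establish the bound, let $S$ be a minimal separator of $G$ and suppose for contradiction that $|S| \geq R(k+1,q)$. By \cref{lem:separator}, the graph $G-S$ has at least two $S$-full components, say $C_1$ and $C_2$; thus every vertex of $S$ has a neighbor in $C_1$ and a neighbor in $C_2$. Applying Ramsey's theorem to the subgraph $G[S]$ and using $\omega(G) = k$ (so that $G$, and in particular $G[S]$, contains no clique of size $k+1$), I obtain an independent set $I \subseteq S$ with $|I| = q$.

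The final step is to exhibit $K_{2,q}$ as an induced minor, contradicting the hypothesis. Since $C_1$ and $C_2$ are connected, take the bags $X_a = V(C_1)$ and $X_b = V(C_2)$ together with the singleton bags $\{v\}$ for each $v \in I$. These bags are pairwise disjoint, as $S$ is disjoint from $V(C_1)\cup V(C_2)$, and each induces a connected subgraph of $G$. It then remains to verify the adjacency pattern: $a$ and $b$ are nonadjacent because $C_1$ and $C_2$ lie in distinct components of $G-S$; the vertices of $I$ are pairwise nonadjacent since $I$ is independent; and each $v \in I$ is adjacent to both $a$ and $b$ because $C_1$ and $C_2$ are $S$-full. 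This is precisely an induced minor model of $K_{2,q}$ with parts $\{a,b\}$ and $I$, so $G$ contains $K_{2,q}$ as an induced minor, a contradiction. Hence $|S| \leq R(k+1,q)-1$.

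The argument is short, and the only place requiring genuine care is the verification that the proposed bags form a valid induced minor model of $K_{2,q}$ (as opposed to merely a minor model): one must check that no unwanted edges appear between the two bags of the size-$2$ part, which is guaranteed by $C_1$ and $C_2$ being different components of $G-S$, and that no edges appear among the vertices of $I$, which is guaranteed by independence. Everything else is a direct combination of Ramsey's theorem with the structural characterization of minimal separators in \cref{lem:separator}, fed into \cref{bounded min sep implies bdd tw}.
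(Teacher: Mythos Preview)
Your proposal is correct and follows essentially the same approach as the paper: bound minimal separators by $R(k+1,q)-1$ via Ramsey's theorem and the two $S$-full components, then apply Skodinis' theorem. The only cosmetic difference is that you invoke \cref{lem:separator} explicitly, whereas the paper cites the minimality of $S$ directly; your verification of the induced minor model is also slightly more detailed than the paper's, but the argument is the same.
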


\begin{proof}
Fix two positive integers $q$ and $k$, and let $G$ be a $K_{2,q}$-induced-minor-free graph with $\omega(G) = k$.
We claim that every minimal separator in $G$ has size at most $R(k+1,q) -1$. Suppose this is not the case, and let
$u$ and $v$ be two nonadjacent vertices in $G$ such that $|S| \geq R(k+1,q)$ for some
minimal $u{,}v$-separator $S$ in $G$. Since $|S| \geq R(k+1,q)$, Ramsey's theorem implies that $G[S]$ contains either a clique of size $k+1$ or
an independent set of size $q$. Since $\omega(G[S])\le \omega(G) = k$, we infer that $G[S]$ contains an independent set $I$ of size $q$.
 Let $C_u$ and $C_v$ denote the connected components of $G-S$ containing $u$ and $v$, respectively.
By the minimality of $S$, every vertex in $S$ has a neighbor in $C_u$ and a neighbor in $C_v$ (see, e.g.,~\cite{MR2063679}).
  But now, the sets  $V(C_u)$, $V(C_v)$, and $\{x\}$ for all $x\in I$ form the bags of an induced minor model of $K_{2,q}$ in $G$, a contradiction.
Therefore, every minimal separator in $G$ has size at most $R(k+1,q) -1$.
  Using \cref{bounded min sep implies bdd tw}, we obtain that $\tw(G) \leq
  \max \{k, 2R(k+1,q) -2\}-1$.
\end{proof}

\begin{remark}\label{remark:K2n}
The binding function given by \cref{K2n-im-free graphs have bdd tw} cannot be improved by means of improving the Ramsey number when restricted to the class of $K_{2,q}$-induced-minor-free graphs. Indeed, for every two positive integers $k$ and $q$, the least positive integer $N$ such that every $K_{2,q}$-induced-minor-free graph with at least $N$ vertices contains either a clique of size $k$ or an independent set of size $q$ equals the Ramsey number $R(k,q)$. This follows from~\cite[Theorem 2]{MR3202286};
the key observation is that there is a graph with $R(k,q)-1$ vertices having no clique of size $k$ and no independent set of size $q$, and every such graph is $K_{2,q}$-induced-minor-free.
\end{remark}

A graph $G$ is said to be \emph{$1$-perfectly orientable} if it has an orientation $D$ such that for every vertex $v\in V(G)$, the out-neighborhood of $v$ in $D$ is a clique in~$G$.
The class of $1$-perfectly orientable graphs is a common generalization of the classes of chordal graphs and circular-arc graphs.
While $1$-perfectly orientable graphs were studied in several papers (see, e.g.,~\cite{MR1244934,MR3647815,MR3853110}), their structure remains poorly understood.
\citeauthor{MR3853110} showed in~\cite{MR3853110} that the treewidth of every $1$-perfectly orientable planar graph is at most 21 and asked whether the class of $1$-perfectly orientable graphs is $(\tw,\omega)$-bounded. Since every $1$-perfectly orientable graph excludes $K_{2,3}$  as an induced minor (see~\cite{MR3647815}),  \cref{K2n-im-free graphs have bdd tw} answers their question in the affirmative.

\begin{corollary}\label{1po}
The class of $1$-perfectly orientable graphs is $(\tw,\omega)$-bounded with a binding function $f(k) = \max \{k, 2R(k+1,3) -2\}-1$.
\end{corollary}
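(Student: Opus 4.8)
The plan is to exhibit the class of $1$-perfectly orientable graphs as a subclass of a class already shown to be $(\tw,\omega)$-bounded with exactly the claimed binding function, and then invoke subclass inheritance. Concretely, the target class is that of $K_{2,3}$-induced-minor-free graphs, for which \cref{K2n-im-free graphs have bdd tw}, specialized to $q=3$, provides the binding function $f(k) = \max\{k, 2R(k+1,3)-2\}-1$ appearing in the statement. So the entire task reduces to producing the containment and transferring the boundedness.

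The first step is to record the structural input, established in~\cite{MR3647815}, that every $1$-perfectly orientable graph excludes $K_{2,3}$ as an induced minor. I would take this as given; the underlying reason is that contracting the bags of an induced-minor model of $K_{2,3}$ preserves $1$-perfect orientability, yet $K_{2,3}$ itself admits no $1$-perfect orientation --- its six edges force a total out-degree of six over five vertices, so some out-neighborhood has size at least two, and, being an independent set in a triangle-free graph, it cannot be a clique. The second step is the elementary observation that a $(\tw,\omega)$-binding function for a class is automatically a binding function for every subclass: this is immediate from the definition, since the inequality $\tw(G') \le f(\omega(G'))$ is required only for induced subgraphs $G'$ of members $G$ of the class, and membership in the smaller class implies membership in the larger. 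Applying this with the class of $1$-perfectly orientable graphs sitting inside the class of $K_{2,3}$-induced-minor-free graphs, and reading off the function from \cref{K2n-im-free graphs have bdd tw} with $q=3$, yields the statement verbatim.

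Since the substantive work --- bounding the size of minimal separators via Ramsey's theorem and then applying \cref{bounded min sep implies bdd tw} --- has already been carried out in the proof of \cref{K2n-im-free graphs have bdd tw}, I expect no genuine obstacle to remain. The only non-elementary ingredient is the cited $K_{2,3}$-induced-minor-freeness of $1$-perfectly orientable graphs, which I treat as a black box; everything else is a one-line specialization plus the trivial passage to a subclass.
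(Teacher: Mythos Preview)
Your proposal is correct and matches the paper's approach exactly: the paper derives the corollary directly from the fact (cited from~\cite{MR3647815}) that every $1$-perfectly orientable graph is $K_{2,3}$-induced-minor-free, together with \cref{K2n-im-free graphs have bdd tw} specialized to $q=3$. Your additional justification for why $K_{2,3}$ itself fails to be $1$-perfectly orientable is a nice touch but goes beyond what the paper provides.
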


\Cref{lemma:H is not any H} and \cref{cor:W4,cor:K_5-} lead to the following characterization.

\begin{theorem}\label{thm:necessary forbidden induced minors}
Let $H$ be a graph. Then, the class of $H$-induced-minor-free graphs is $(\tw,\omega)$-bounded if and only if one of the following conditions holds:
$H\subseteq_{is} W_4$, $H \subseteq_{is} K_5^-$, $H \subseteq_{is} K_{2,q}$ for some $q\in \mathbb{N}$, or $H \subseteq_{is} K_{2,q}^+$ for some $q\in \mathbb{N}$.
\end{theorem}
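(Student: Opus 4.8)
The plan is to prove the two implications separately, using in both directions the fact that the four graphs $W_4$, $K_5^-$, $K_{2,q}$ and $K_{2,q}^+$ are the extremal cases, together with the monotonicity of $(\tw,\omega)$-boundedness under the induced minor order.

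For the sufficiency direction, I would first record the following monotonicity principle: if $H\subseteq_{im}H'$, then every graph containing $H'$ as an induced minor also contains $H$, so the class of $H$-induced-minor-free graphs is contained in the class of $H'$-induced-minor-free graphs; consequently, if the latter is $(\tw,\omega)$-bounded, then so is the former. Since $\subseteq_{is}$ implies $\subseteq_{im}$, the cases $H\subseteq_{is}W_4$, $H\subseteq_{is}K_5^-$, and $H\subseteq_{is}K_{2,q}$ follow directly from \cref{cor:W4}, \cref{cor:K_5-}, and \cref{K2n-im-free graphs have bdd tw}, respectively. The only case needing an extra observation is $H\subseteq_{is}K_{2,q}^+$: contracting a single edge of $K_{2,q+1}$ incident to a vertex of the part of size $q+1$ produces exactly $K_{2,q}^+$, so $K_{2,q}^+\subseteq_{im}K_{2,q+1}$; combined with the monotonicity principle and \cref{K2n-im-free graphs have bdd tw}, this settles the last case.

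For the necessity direction, assume the class of $H$-induced-minor-free graphs is $(\tw,\omega)$-bounded. By \cref{H is in the three classes}, $H$ is an induced minor of some balanced complete bipartite graph $K_{n,n}$ and also an induced minor of some elementary wall (the case $q=0$). Since elementary walls are planar and induced minors are minors, $H$ is a minor of a planar graph and hence planar; in particular $H$ contains neither $K_5$ nor $K_{3,3}$ as a subgraph. The conceptual heart of the argument is then the following structural description, which I would isolate as a lemma: a graph $H$ satisfies $H\subseteq_{im}K_{n,n}$ for some $n$ if and only if $H$ is the join $K_m\vee K_{s,t}$ of a clique on $m$ vertices with a complete bipartite graph $K_{s,t}$, for some integers $m,s,t\ge 0$ (here the join adds all edges between the two parts). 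To prove the forward direction, I fix a model $(X_v)$ in $K_{n,n}$ with parts $A$ and $B$; since $A$ and $B$ are independent, any bag contained in a single part is a single vertex, while any bag meeting both parts is adjacent to every other bag and hence corresponds to a universal vertex of $H$. The universal vertices induce a clique that is complete to the rest, and the singleton bags split according to their side into two independent sets with all edges between them, i.e.\ a complete bipartite graph; this yields $H\cong K_m\vee K_{s,t}$.

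It remains to combine planarity with the structure $H\cong K_m\vee K_{s,t}$ (say $s\le t$) to conclude that $H$ is an induced subgraph of one of the four stated graphs; this is the main technical step, a finite case analysis driven by the two forbidden subgraphs $K_5$ and $K_{3,3}$. First, if $s\ge 1$ then $\omega(H)=m+2$, so planarity forces $m\le 2$, while if $s=0$ it forces $m\le 3$. Second, the $m$ universal vertices together with the size-$s$ part are complete to the size-$t$ part, so a $K_{3,3}$ appears whenever $t\ge 3$ and $m+s\ge 3$; hence for $t\ge 3$ only $m+s\le 2$ survives, leaving $K_{1,t}$, $K_{2,t}$, and $K_2\vee\overline{K_t}=K_{2,t}^+$, all induced subgraphs of $K_{2,t}$ or $K_{2,t}^+$. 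The remaining configurations have $t\le 2$, hence at most $m+4\le 7$ vertices, and are checked directly: the surviving graphs are $W_4=K_1\vee K_{2,2}$, $K_5^-=K_3\vee\overline{K_2}$, and induced subgraphs thereof (for instance $K_2\vee K_{1,2}=K_5^-$ and $K_1\vee K_{1,2}=K_{2,2}^+$), while borderline planar-looking cases such as $K_2\vee K_{2,2}=K_2\vee C_4$ are discarded because they already contain $K_{3,3}$. The main obstacle I anticipate is twofold: proving the structural lemma cleanly (in particular forcing every bag that meets both parts to be universal, and showing the singletons induce a complete bipartite graph), and organizing the planarity case analysis so that it is manifestly exhaustive; the $K_5$- and $K_{3,3}$-certificates above are precisely the tools that make each excluded case routine.
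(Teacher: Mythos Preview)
Your proposal is correct and follows essentially the same approach as the paper: both directions use the same ingredients (monotonicity plus \cref{cor:W4,cor:K_5-,K2n-im-free graphs have bdd tw} and the observation $K_{2,q}^+\subseteq_{im}K_{2,q+1}$ for sufficiency; the structural lemma $H\cong K_r\ast K_{p,q}$ for induced minors of $K_{n,n}$ combined with planarity from walls for necessity). Your case analysis is organized by the value of $t$ rather than the paper's case split on $r$, but the content is the same; just be careful that the bound ``$m\le 3$ when $s=0$'' misses the case $s=t=0$, $m=4$ (i.e., $H\cong K_4$), which is nonetheless covered as $K_4\subseteq_{is}K_5^-$.
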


\begin{proof}
Suppose that the class of $H$-induced-minor-free graphs is $(\tw,\omega)$-bounded.
Since, by \cref{lemma:H is not any H}, the class of balanced complete bipartite graphs is $(\tw,\omega)$-unbounded,
$H$ must be an induced minor of some complete bipartite graph $K_{n,n}$. Let $M = (X_u : u \in H)$ be an induced minor model of $H$ in $K_{n,n}$.
We define two types of bags in $M$: the \emph{tiny bags} containing a single vertex and the \emph{large bags} containing at least $2$ vertices.
It is clear that the set of large bags corresponds to a clique in $H$, while the union of the tiny bags induces a complete bipartite subgraph of $K_{n,n}$.
Hence, $H \cong K_{p,q} \ast K_r$ for some $p,q,r\ge 0$, where $\ast$ represents the join of the two graphs, that is, the addition of all possible edges between vertices in $K_{p,q}$ and vertices in $K_r$. Without loss of generality, we assume that $p \leq q$.
Observe that $H$ needs to be planar; otherwise the class of $H$-induced-minor-free graphs would contain the class of elementary walls, which
by~\cref{lemma:H is not any H} is $(\tw,\omega)$-unbounded.
Hence, we can analyze the possible values for $p$, $q$, and $r$ that allow $H$ to be planar.
Let us first notice that $p \leq 2$, as otherwise $H$ would contain $K_{3,3}$ as a subgraph and would thus be planar.
Similarly, $r \leq 4$ since otherwise $H$ would contain $K_5$ as a subgraph. Also, it is easily observed that if $p=1$, then $H \cong K_{0,q} \ast K_{r+1}$, and similarly
if $q=1$, then $H \cong K_{p,0} \ast K_{r+1}$.
Hence, we may assume that $p \in \{0,2\}$ and $q \neq 1$.
Consider the following cases:
\begin{itemize}
  \item Case $r=4$:
    Then $p = q = 0$, otherwise $K_5 \subseteq_s H$.
    Hence, $H \cong K_4$.

  \item Case $r=3$:
    Then $p = 0$, otherwise $K_{3,3} \subseteq_s H$.
    If $q \geq 3$, then $K_{3,3} \subseteq_s H$, and thus $q \leq 2$.
    If $q = 0$, then $H \cong K_3$, and if $q=2$, then $H \cong K_5^-$.

  \item Case $r=2$:
    Then $p = 0$, otherwise $K_{3,3} \subseteq_s H$.
    This implies that $H \cong K_{2,q}^{+}$.

  \item Case $r=1$:
    If $p = 2$, then $q = 2$ (since otherwise $K_{3,3} \subseteq_s H$) and $H \cong W_4$.
    If $p = 0$, then $H \cong K_{1,q}$.

  \item Case $r=0$:
    Then $H$ is edgeless or $H\cong K_{2,q}$.
\end{itemize}
Thus, $H\subseteq_{is} W_4$, $H \subseteq_{is} K_5^-$, $H \subseteq_{is} K_{2,q}$, or $H \subseteq_{is} K_{2,q}^+$ for some $q\in \mathbb{N}$, as desired.

For the converse, suppose first that $H \subseteq_{is} K_{2,q}$ or $H \subseteq_{is} K_{2,q}^+$ for some $q\in \mathbb{N}$. It is not difficult to notice that $K_{2,q}^+$ is an induced minor of $K_{2,q+1}$, obtained by contracting one edge. From \cref{K2n-im-free graphs have bdd tw} it then follows that the class of $H$-induced-minor-free graphs is $(\tw,\omega)$-bounded. If $H\subseteq_{is} W_4$ or $H \subseteq_{is} K_5^-$,
then \cref{cor:W4,cor:K_5-} apply.
\end{proof}

\Cref{thm:necessary forbidden induced minors,corollary:(tw omega)-B implies chi-B} have the following consequence.

\begin{corollary}\label{chi-bounded-induced-minor}
Let $H$ be a graph such that $H\subseteq_{is} W_4$, $H \subseteq_{is} K_5^-$, $H \subseteq_{is} K_{2,q}$ for some $q\in \mathbb{N}$, or $H \subseteq_{is} K_{2,q}^+$ for some $q\in \mathbb{N}$. Then the class of $H$-induced-minor-free graphs is $\chi$-bounded.
\end{corollary}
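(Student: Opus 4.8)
The plan is to derive the conclusion directly by combining the two results cited immediately before the statement, so the proof will be very short.

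First I would observe that the list of admissible graphs $H$ in the hypothesis — namely $H\subseteq_{is} W_4$, $H \subseteq_{is} K_5^-$, $H \subseteq_{is} K_{2,q}$ for some $q\in \mathbb{N}$, or $H \subseteq_{is} K_{2,q}^+$ for some $q\in \mathbb{N}$ — coincides verbatim with the condition characterizing $(\tw,\omega)$-boundedness in \cref{thm:necessary forbidden induced minors}. Consequently, that theorem applies and yields at once that the class of $H$-induced-minor-free graphs is $(\tw,\omega)$-bounded.

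Second, I would invoke \cref{corollary:(tw omega)-B implies chi-B}, which states that every $(\tw,\omega)$-bounded graph class is $\chi$-bounded; recall that this follows from the inequality $\chi(G')-1 \le \tw(G')$ of \cref{thm:chi} applied to induced subgraphs, together with the $(\tw,\omega)$-binding function. Feeding the class identified in the previous step into this corollary gives the desired $\chi$-boundedness.

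Since both ingredients are already established earlier in the paper, there is essentially no genuine obstacle here: the only thing to check is that the hypothesis on $H$ matches the characterization in \cref{thm:necessary forbidden induced minors}, which it does by construction. I would therefore present the argument as a single implication, stating that \cref{thm:necessary forbidden induced minors} makes the class $(\tw,\omega)$-bounded and \cref{corollary:(tw omega)-B implies chi-B} then makes it $\chi$-bounded, and conclude the proof.
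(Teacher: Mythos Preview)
Your proposal is correct and matches the paper's approach exactly: the paper simply states that the corollary is a consequence of \cref{thm:necessary forbidden induced minors} and \cref{corollary:(tw omega)-B implies chi-B}, which is precisely the two-step argument you outline.
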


To the best of our knowledge, $\chi$-boundedness of the classes of $H$-induced-minor-free graphs whenever $H$ is isomorphic to $W_4$, $K_5^-$,
$K_{2,q}$ for some $q\ge 3$, or $K_{2,q}^+$ for some $q\ge 3$, was not known prior to our work.

\begin{remark}
If $H$ is an induced subgraph of any of the above listed graphs but not isomorphic to any of them, then $\chi$-boundedness of the class of $H$-induced-minor-free graphs follows from results in the literature.
Indeed, in this case, $H$ is either an induced subgraph of $K_{1,q}$ for some $q\ge 3$ or an induced subgraph of $C_4$, $K_4^-$, or $K_4$.
If $H$ is an induced subgraph of $K_{1,q}$ for some $q\ge 3$, then
$\chi$-boundedness of the class of $H$-induced-minor-free graphs follows,
for example, from an easy application of Ramsey's theorem to the class of $K_{1,q}$-free graphs.
The cases when $H$ is an induced subgraph of $C_4$ and $K_4^-$
correspond, respectively, to the classes of chordal and block-cactus graphs (by~\cref{observation:chordal,diamond-itm-free graphs are block-cactus-graphs}).
In the former case, $\chi$-boundedness follows from the fact that chordal graphs are perfect. In the latter case, we can use the fact that block-cactus graphs have bounded clique-width~\cite{MR2536473}, which is a sufficient condition for $\chi$-boundedness~\cite{MR4125349,MR3350076}.
The case when $H$ is an induced subgraph of $K_4$ corresponds to the class of $K_4$-topological-minor-free graphs, and all such graphs are $3$-colorable~\cite{MR45371}.
The above arguments do not apply if $H$ is isomorphic to any of $W_4$, $K_5^-$, $K_{2,3}$, or $K_{2,3}^+$, as none of the resulting classes is contained in the class of perfect graphs or in any graph class of bounded chromatic number or bounded clique-width. (This can be seen using the results of~\cite{MR3741534}, the fact that complete graphs are $H$-induced-minor-free, and that odd cycles of length at least $5$ are $H$-induced-minor-free but not perfect.)
\end{remark}

\section{Forbidding a subgraph, a topological minor, or a minor}\label{sec:stmm}

We now complete our six dichotomy theorems by showing that known results on treewidth and graph minors imply characterizations of \hbox{$(\tw,\omega)$-bounded} graph classes excluding a single graph as either a subgraph, a topological minor, or a minor.
We start with a simple but useful observation.

\begin{lemma}\label{bounded treewidth}
Let $H$ be a graph, and let $\mathcal{G}$ be a graph class contained in the class of $H$-subgraph-free graphs. Then $\mathcal{G}$ is $(\tw,\omega)$-bounded if and only if $\mathcal{G}$ has bounded treewidth.
\end{lemma}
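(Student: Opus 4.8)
The plan is to prove both implications directly from the definitions, the only non-formal ingredient being the observation that forbidding $H$ as a subgraph forces a constant bound on the clique number.

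First I would dispatch the easy direction. Suppose $\mathcal{G}$ has bounded treewidth, say $\tw(G)\le c$ for all $G\in\G$. Since treewidth does not increase under taking subgraphs—and in particular under taking induced subgraphs—every induced subgraph $G'$ of a graph $G\in\G$ satisfies $\tw(G')\le\tw(G)\le c$. Hence the constant function $f\equiv c$ is a $(\tw,\omega)$-binding function for $\G$, so $\G$ is $(\tw,\omega)$-bounded.

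For the converse, suppose $\G$ is $(\tw,\omega)$-bounded with binding function $f$. The key point is that $H$-subgraph-freeness forces the clique number to be bounded by a constant depending only on $H$. Writing $n=|V(H)|$, the graph $H$ is a subgraph of the complete graph $K_n$; consequently, any graph containing a clique of size $n$ contains $K_n$, and hence $H$, as a subgraph. Since every graph in $\G$ is $H$-subgraph-free, we conclude that $\omega(G)\le n-1$ for all $G\in\G$.

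Finally I would combine the two facts. Applying the binding function to each $G\in\G$ with the choice $G'=G$ gives $\tw(G)\le f(\omega(G))$, and since $\omega(G)\in\{0,1,\dots,n-1\}$ this yields $\tw(G)\le \max\{f(k):0\le k\le n-1\}$, a constant depending only on $H$ and $f$. Therefore $\G$ has bounded treewidth. I do not anticipate a genuine obstacle: the entire content is the inclusion $H\subseteq_s K_{|V(H)|}$, which converts the forbidden-subgraph hypothesis into a uniform bound on $\omega$, after which the binding function bounds the treewidth with no further work. Note in particular that for this direction we only need the binding inequality applied to $G$ itself, not to all of its induced subgraphs.
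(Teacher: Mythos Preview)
Your proof is correct and follows essentially the same approach as the paper: the easy direction is immediate, and for the converse you observe (as the paper does) that $H$-subgraph-freeness forces $\omega(G)\le |V(H)|-1$, whence the binding function yields a uniform treewidth bound $\max\{f(k):k\le |V(H)|-1\}$.
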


\begin{proof}
Any graph class having bounded treewidth is $(\tw,\omega)$-bounded.
For the converse direction, assume that $\mathcal{G}$ is $(\tw,\omega)$-bounded with a binding function $f$.
Note that no graph $G\in \mathcal{G}$ can have a clique of size $|V(H)|$, since otherwise $G$ would not be $H$-subgraph-free.
Hence, every graph $G\in \mathcal{G}$ satisfies $\omega(G)\in\{1,\ldots, k\}$, where $k = |V(H)|-1$, and thus the treewidth of $G$ is at most $\max\{f(1),\ldots, f(k)\}$.
\end{proof}

\begin{theorem}[Robertson and Seymour \cite{MR854606}]\label{planar minor implies bounded treewidth}
For every planar graph $H$, the class of $H$-minor-free graphs has bounded treewidth.
\end{theorem}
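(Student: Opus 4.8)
The plan is to derive this statement from two ingredients, the genuinely hard one being cited rather than proved. The first ingredient is the Excluded Grid Theorem of Robertson and Seymour: there is a function $w\colon \N \to \N$ such that every graph of treewidth at least $w(t)$ contains the $(t\times t)$\=/grid as a minor. This is the deep input and I would simply invoke it. The second ingredient is the elementary fact that every planar graph occurs as a minor of a sufficiently large grid.

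For the second ingredient, I would argue as follows. Fix a planar graph $H$. Starting from a planar embedding of $H$, one obtains a drawing of $H$ (equivalently, of some subdivision of $H$) in the plane in which all vertices lie on integer grid points and all edges follow grid lines; this is the standard grid-drawing of planar graphs. Such a drawing sits inside the $(t\times t)$\=/grid for some integer $t = t(H)$ depending only on $H$. Assigning to each vertex of $H$ the grid point representing it and to each edge of $H$ the grid path drawing it, and then contracting each of these connected pieces, realizes $H$ as a minor of the $(t\times t)$\=/grid. Hence there exists $t = t(H)$ with $H \subseteq_m (t\times t)\text{-grid}$.

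Combining the two ingredients gives the result. Let $H$ be planar, choose $t = t(H)$ as above, and set $c = w(t)-1$. Suppose $G$ is $H$-minor-free. If $G$ had treewidth at least $w(t)$, then by the Excluded Grid Theorem $G$ would contain the $(t\times t)$\=/grid as a minor; since $H \subseteq_m (t\times t)\text{-grid} \subseteq_m G$ and the minor relation is transitive, we would get $H \subseteq_m G$, contradicting $H$-minor-freeness. Therefore $\tw(G) < w(t)$, i.e. $\tw(G) \le c$. As $c$ depends only on $H$, the class of $H$-minor-free graphs has bounded treewidth.

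The main obstacle is entirely the Excluded Grid Theorem, which carries all the difficulty and is cited rather than reproven; everything else is routine. The only point demanding a little care is the grid-embedding of planar graphs, and even there it suffices to know the \emph{existence} of some grid of which $H$ is a minor, so no control over $t(H)$ is needed and the quantitative details can be suppressed.
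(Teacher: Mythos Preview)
The paper does not give a proof of this theorem; it is simply cited as a known result of Robertson and Seymour and used as a black box. Your sketch is correct and is in fact the standard route to the statement: the Excluded Grid Theorem is precisely the main content of the cited Robertson--Seymour paper, and combining it with the elementary fact that every planar graph is a minor of some grid yields the claim exactly as you describe.
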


Recall that $\mathcal{S}$ is the class of graphs in which every connected component is either a path or a subdivided claw.

\begin{lemma}[\citeauthor{MR3280698}~\cite{MR3280698}]\label{lemma:kS_ell as minor <=> kS_ell as subgraph}
For every $H \in \mathcal{S}$, a graph $G$ is $H$-subgraph-free if and only if it is $H$-minor-free.
\end{lemma}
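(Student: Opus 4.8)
The plan is to prove the two implications separately, noting that one direction is immediate and holds for every graph, while the hypothesis $H \in \mathcal{S}$ is needed only for the other. Recall from \cref{sec:prelim} that $H \subseteq_s G$ implies $H \subseteq_m G$. Taking the contrapositive immediately gives the direction ``$H$-minor-free $\Rightarrow$ $H$-subgraph-free'': if $G$ has no $H$-minor, then in particular it has no copy of $H$ as a subgraph. So this half requires no special structure of $H$.

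For the converse, ``$H$-subgraph-free $\Rightarrow$ $H$-minor-free'', I would argue by contraposition and show that $H \subseteq_m G$ implies $H \subseteq_s G$ whenever $H \in \mathcal{S}$. The argument proceeds in three steps. First, observe that every graph in $\mathcal{S}$ has maximum degree at most three (equivalently, is subcubic): a path has all degrees at most two, and a subdivided claw has a single vertex of degree three, namely the center, while every other vertex has degree at most two; a disjoint union of such components preserves this bound. Second, invoke the classical fact from graph minor theory that for a graph $H$ whose maximum degree is at most three, $H$ is a minor of $G$ if and only if $H$ is a \emph{topological} minor of $G$; the nontrivial direction says precisely that a minor model can be upgraded to a subdivision when no branch vertex needs degree exceeding three. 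Applying this, $H \subseteq_m G$ yields $H \subseteq_{tm} G$, so $G$ contains some subdivision $H'$ of $H$ as a subgraph.

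The third step is to show $H \subseteq_s H'$, which combined with $H' \subseteq_s G$ gives $H \subseteq_s G$ as desired. Here I would use that subdivision preserves the component structure of a member of $\mathcal{S}$: the subdivision of a path is again a path (with no fewer edges), and the subdivision of a subdivided claw is again a subdivided claw whose center still has degree three and whose three arms have lengths no shorter than the original ones. In each component the original embeds into its subdivision as a subgraph: for a path component, take an initial segment of the correct length; for a subdivided-claw component, keep the center together with the first $a_i$ edges of the $i$-th arm, where $a_i$ is the original arm length. Since the components of $H$ and of $H'$ are vertex-disjoint, these per-component embeddings combine into an embedding of $H$ into $H'$.

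The main obstacle is the second step: the equivalence of minor and topological minor containment for subcubic $H$ is the only nonelementary ingredient, and it is what forces the restriction to graphs in $\mathcal{S}$ (whose max degree is three) rather than allowing components with branch vertices of higher degree. Once that equivalence is granted, the remaining work is the purely combinatorial bookkeeping of the third step, verifying that ``subdivide then take an initial segment'' recovers each original component. I would also remark that this is exactly why the statement fails for general $H$: a graph $H$ with a vertex of degree at least four can be a minor of $G$ without being a topological minor, and hence without being a subgraph.
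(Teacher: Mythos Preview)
The paper does not give its own proof of this lemma; it is stated as a citation to~\cite{MR3280698} and used as a black box. Your argument is correct and self-contained, and it meshes nicely with the surrounding text: the subcubic minor/topological-minor equivalence you invoke in step~2 is precisely \cref{wall tm or m is equivalent}, which the paper also quotes (from Diestel) a few lines later. Steps~1 and~3 are the elementary but essential bookkeeping that turns that equivalence into the desired subgraph containment, and your per-component embedding in step~3 is exactly right.

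One small comment on your closing remark: you attribute the failure for general $H$ solely to step~2 (vertices of degree $\ge 4$), but step~3 can also fail for subcubic $H$ outside $\mathcal{S}$. For instance, $C_3$ is subcubic and is a minor of $C_5$, hence also a topological minor of $C_5$, yet $C_3\not\subseteq_s C_5$; the point is that $C_3$ does not embed as a subgraph into its own subdivisions. So the hypothesis $H\in\mathcal{S}$ is genuinely used in step~3, not only to guarantee subcubicity for step~2. This does not affect the correctness of your proof, only the sharpness of the closing explanation.
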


\begin{theorem}\label{thm:forbidden subgraph}
For every graph $H$, the following conditions are equivalent.
\begin{enumerate}
\item The class of $H$-subgraph-free graphs is $(\tw,\omega)$-bounded.\label[condition]{condition1.1}
\item The class of $H$-subgraph-free graphs has bounded treewidth.\label[condition]{condition1.2}
\item $H \in \mathcal{S}$.\label[condition]{condition1.3}
\end{enumerate}
\end{theorem}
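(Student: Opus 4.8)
The plan is to prove the cycle of implications by first reducing to a statement purely about treewidth and then handling the two nontrivial directions separately. The equivalence of \cref{condition1.1,condition1.2} is immediate: applying \cref{bounded treewidth} with $\mathcal{G}$ equal to the whole class of $H$-subgraph-free graphs shows that this class is $(\tw,\omega)$-bounded if and only if it has bounded treewidth. Hence it suffices to prove the equivalence of \cref{condition1.2,condition1.3}.

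For \cref{condition1.3} $\Rightarrow$ \cref{condition1.2}, I would first observe that every graph in $\mathcal{S}$ is planar, since paths and subdivided claws are trees and a disjoint union of planar graphs is planar. Assuming $H \in \mathcal{S}$, \cref{lemma:kS_ell as minor <=> kS_ell as subgraph} tells us that the classes of $H$-subgraph-free and $H$-minor-free graphs coincide, and since $H$ is planar, \cref{planar minor implies bounded treewidth} gives that this class has bounded treewidth.

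The substantive direction is \cref{condition1.2} $\Rightarrow$ \cref{condition1.3}, which I would prove in contrapositive form, extracting the structure of $H$ from the necessary condition already recorded in \cref{H is in the three classes}. Assume the class of $H$-subgraph-free graphs has bounded treewidth; then it is $(\tw,\omega)$-bounded, so by \cref{H is in the three classes} applied to $\subseteq_s$, the graph $H$ is a subgraph of some $q$-subdivided wall for every $q \ge 0$. Fixing $q = |V(H)|$ and letting $G$ be such a $q$-subdivided wall with $H \subseteq_s G$, I would read off three structural facts about $H$: since $G$ is subcubic, $H$ is subcubic; since $G$ has girth larger than $|V(H)|$, every cycle of $H$ would force a cycle of the same length in $G$, so $H$ is a forest; and finally each component of $H$ has at most one vertex of degree three. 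Together these give that every component of $H$ is a path or a subdivided claw, i.e. $H \in \mathcal{S}$.

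The crux, and the step I expect to be the main obstacle, is the last structural fact, namely ruling out a tree component with two branch vertices. The key observation is that in a $q$-subdivided wall the degree-three vertices are pairwise at distance at least $q+1$, because any path joining two of them must traverse a fully subdivided original edge. A branch vertex of $H$ has degree three and $G$ is subcubic, so every branch vertex must embed onto a degree-three vertex of $G$; two branch vertices of a single component lie at distance at most $|V(H)| < q+1$ in $H$, hence at distance less than $q+1$ in $G$, contradicting the minimum-distance bound. This distance-scaling argument is exactly where the freedom to choose $q$ as large as $|V(H)|$ is used, and it is what converts the soft necessary condition of \cref{H is in the three classes} into the precise membership $H \in \mathcal{S}$. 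The same family of subdivided walls, whose unbounded treewidth is recorded in \cref{lemma:H is not any H}, simultaneously rules out high maximum degree and the presence of cycles, so no separate case analysis on the shape of $H$ is required.
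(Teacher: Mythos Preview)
Your proposal is correct and follows essentially the same route as the paper: the equivalence of \cref{condition1.1,condition1.2} via \cref{bounded treewidth}, the implication \cref{condition1.3} $\Rightarrow$ \cref{condition1.2} via \cref{lemma:kS_ell as minor <=> kS_ell as subgraph} and \cref{planar minor implies bounded treewidth}, and the implication \cref{condition1.2} $\Rightarrow$ \cref{condition1.3} by forcing $H$ into sufficiently subdivided walls to read off subcubicity, acyclicity, and the single-branch-vertex property. The only cosmetic difference is that the paper varies the subdivision parameter case by case (elementary walls for subcubicity, $\ell$-subdivided walls with $\ell$ the distance between two branch vertices), whereas you fix $q = |V(H)|$ once and handle all three structural facts with a single wall; this is a minor streamlining and both arguments are equally valid.
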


\begin{proof}
Equivalence between~\cref{condition1.1,condition1.2} follows from~\cref{bounded treewidth}.

Suppose now that the class of $H$-subgraph-free graphs has bounded treewidth.
Since the class of elementary walls has unbounded treewidth, $H$ must be a subgraph of some elementary wall.
This implies that $H$ is subcubic.
Suppose next that $H$ contains a connected component with two vertices $u$ and $v$ of degree $3$, and let $\ell$ be the distance between $u$ and $v$.
Then the class of $\ell$-subdivided walls is a subclass of the class of $H$-subgraph-free graphs, and the class of $H$-subgraph-free graphs has unbounded treewidth, a contradiction.
Thus, every connected component of $H$ has at most one vertex of degree~$3$.
Using a similar reasoning, we can conclude that $H$ is acyclic, and thus $H\in \mathcal{S}$.

Finally, suppose that $H \in \mathcal{S}$. Then following \cref{lemma:kS_ell as minor <=> kS_ell as subgraph} every $H$-subgraph-free graph is also $H$-minor-free. Hence, by \cref{planar minor implies bounded treewidth}, the class of $H$-subgraph-free graphs has bounded treewidth.
\end{proof}

A similar approach can be used to prove \cref{thm:forbidden topological minor,thm:H planar is cool}.
We will need the following result.

\begin{lemma}[see, e.g., Diestel \cite{MR3644391}]\label{wall tm or m is equivalent}
A subcubic graph $H$ is a minor of a graph $G$ if and only if $H$ is a topological minor of $G$.
\end{lemma}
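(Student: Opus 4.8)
The plan is to prove the two implications separately, observing that only one of them uses the hypothesis that $H$ is subcubic. The direction ``$H\subseteq_{tm}G\Rightarrow H\subseteq_{m}G$'' holds for every graph $H$ and is exactly the general implication $\subseteq_{tm}\Rightarrow\subseteq_m$ recorded in \cref{sec:prelim}: from a subgraph of $G$ that is a subdivision of $H$, one obtains a minor model of $H$ by assigning to each $x\in V(H)$ the branch vertex representing $x$ together with the interiors of the subdivided paths that represent the edges of $H$ incident to $x$. Hence all the content lies in the converse, and this is where the bound $d_H(x)\le 3$ enters.

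For the converse I would fix a minor model $(X_x:x\in V(H))$ of $H$ in $G$ and, for every edge $e=xy\in E(H)$, select one edge of $G$ between $X_x$ and $X_y$, writing $t_x(e)\in X_x$ and $t_y(e)\in X_y$ for its endpoints. The key structural point is that, since $d_H(x)\le 3$, the set of \emph{terminals} $\mathcal{T}_x=\{t_x(e):e\ni x\}$ inside each bag has size at most three. I would then choose a spanning tree of the connected graph $G[X_x]$ and let $S_x$ be its minimal subtree containing $\mathcal{T}_x$. Because a subtree whose leaves lie among at most three prescribed vertices has at most three leaves, $S_x$ is either a path or a subdivided claw; in either case it possesses a central vertex from which internally disjoint paths reach all of $\mathcal{T}_x$. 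I would designate this central vertex as the branch vertex $b_x$.

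Having fixed the branch vertices, I would assemble a subdivision of $H$ in $G$ by taking, for each edge $e=xy$, the path $P_e$ that concatenates the $b_x$--$t_x(e)$ path inside $S_x$, the chosen cross-edge $e$, and the $t_y(e)$--$b_y$ path inside $S_y$. Verifying that $\bigcup_{e\in E(H)}P_e$ is a subdivision of $H$ is then routine: the $b_x$ are pairwise distinct because the bags are disjoint; each $P_e$ lies entirely in $X_x\cup X_y$, so it meets no other branch vertex, and two paths with no common endvertex are vertex-disjoint; and two paths sharing an endpoint $x$ can meet only inside $X_x$, where by construction they use distinct legs of the star $S_x$ and hence share nothing but $b_x$. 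This exhibits $H\subseteq_{tm}G$, completing the equivalence.

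The hard part will be making the routing inside a single bag rigorous in the degenerate configurations. When the (at most three) terminals are pairwise distinct and no one of them lies on the $S_x$-path between the other two, $S_x$ is a genuine subdivided claw and its degree-three vertex is the obvious choice of $b_x$. However, the terminals may coincide, or all three may lie on a common path of the spanning tree, in which case $S_x$ degenerates to a path whose natural ``center'' has degree two. I would resolve this by taking $b_x$ to be the middle terminal and using the cross-edge incident to it as the third direction leaving $b_x$, so that $b_x$ still emits $d_H(x)$ internally disjoint branches; it is precisely the inequality $d_H(x)\le 3$ that guarantees three directions always suffice, and therefore that a single branch vertex can serve all edges of $H$ at $x$.
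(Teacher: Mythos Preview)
The paper does not give its own proof of this lemma; it is stated as a well-known fact with a reference to Diestel's textbook. Your argument is exactly the standard one found there: the nontrivial direction exploits $d_H(x)\le 3$ to route, inside each bag of a minor model, internally disjoint paths from a single branch vertex to the at most three cross-edge terminals, and your handling of the degenerate configurations (coinciding or collinear terminals, where the Steiner tree $S_x$ degenerates to a path or a point and $b_x$ is taken at a terminal) is correct.
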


\begin{theorem}\label{thm:forbidden topological minor}
For every graph $H$, the following conditions are equivalent.
\begin{enumerate}
\item The class of $H$-topological-minor-free graphs is $(\tw,\omega)$-bounded.\label[condition]{condition2.1}
\item The class of $H$-topological-minor-free graphs has bounded treewidth.\label[condition]{condition2.2}
\item $H$ is subcubic and planar.\label[condition]{condition2.3}
\end{enumerate}
\end{theorem}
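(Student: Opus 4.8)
The plan is to follow the same three-step template used for \cref{thm:forbidden subgraph}, exploiting the fact that the topological minor relation sits between the subgraph and minor relations. First I would record that the class of $H$-topological-minor-free graphs is contained in the class of $H$-subgraph-free graphs: since $H\subseteq_s G$ implies $H\subseteq_{tm} G$, the contrapositive gives that every $H$-topological-minor-free graph is $H$-subgraph-free. With this containment in hand, \cref{bounded treewidth} applies verbatim and yields the equivalence of \cref{condition2.1,condition2.2}, so it remains only to prove that these are equivalent to \cref{condition2.3}.

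For the direction \cref{condition2.2}$\Rightarrow$\cref{condition2.3}, I would argue that $H$ must be a topological minor of an elementary wall and then read off both required properties. Concretely, the class of elementary walls has unbounded treewidth (as used in the proof of \cref{lemma:H is not any H}); if every elementary wall were $H$-topological-minor-free, the class of \cref{condition2.2} would contain all elementary walls and hence have unbounded treewidth, a contradiction. Thus $H\subseteq_{tm} W$ for some elementary wall $W$. Since $W$ is planar and $H$ is a minor of $W$, planarity of $H$ follows from the fact that minors of planar graphs are planar. Subcubicity is equally immediate: a subdivision of $H$ appears as a subgraph of the subcubic graph $W$, so each branch vertex has degree at most three there, forcing every vertex of $H$ to have degree at most three. (Alternatively, since \cref{condition2.2} gives $(\tw,\omega)$-boundedness, one may invoke \cref{H is in the three classes} with $q=0$ to obtain $H\subseteq_{tm} W$ directly.)

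The direction \cref{condition2.3}$\Rightarrow$\cref{condition2.2} is where the characteristic feature of the topological minor relation does the real work. Assuming $H$ is subcubic and planar, the linchpin is \cref{wall tm or m is equivalent}: for a subcubic graph $H$ one has $H\subseteq_{tm} G$ if and only if $H\subseteq_m G$, so the class of $H$-topological-minor-free graphs coincides with the class of $H$-minor-free graphs. Planarity of $H$ then lets me apply \cref{planar minor implies bounded treewidth} to conclude that this class has bounded treewidth. I do not expect a genuine obstacle here; the only points requiring care are, in \cref{condition2.2}$\Rightarrow$\cref{condition2.3}, verifying that subcubicity and planarity are inherited by topological minors of walls, and, in the converse, making sure that the reduction from the topological minor to the minor relation is applied to a subcubic $H$ — which is exactly the hypothesis available. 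Assembling the two implications gives \cref{condition2.2}$\Leftrightarrow$\cref{condition2.3}, which together with the earlier equivalence completes the cycle.
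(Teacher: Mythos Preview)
Your proposal is correct and follows essentially the same approach as the paper's proof: both use \cref{bounded treewidth} for the equivalence of \cref{condition2.1,condition2.2}, derive \cref{condition2.3} from \cref{condition2.2} by forcing $H$ to be a topological minor of an elementary wall, and obtain the converse via \cref{wall tm or m is equivalent} together with \cref{planar minor implies bounded treewidth}. The only difference is cosmetic: you spell out why planarity and subcubicity are inherited by topological minors of walls, whereas the paper states this in one sentence.
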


\begin{sloppypar}
\begin{proof}
Since every $H$-topological-minor-free graph is also $H$-subgraph-free, \cref{bounded treewidth} implies equivalence between \cref{condition1.1,condition1.2}.

Suppose that the class of \hbox{$H$-topological-minor-free} graphs has bounded treewidth.
Since the class of elementary walls has unbounded treewidth, $H$ is a topological minor of some elementary wall. Thus, since every elementary wall is both subcubic and planar, $H$ must also be subcubic and planar.

Finally, suppose that $H$ is subcubic and planar. Since $H$ is subcubic, by \cref{wall tm or m is equivalent} we obtain that every $H$-topological-minor-free graphs is also $H$-minor-free.
Since $H$ is planar, by \cref{planar minor implies bounded treewidth}, the class of \hbox{$H$-topological-minor-free} graphs has bounded treewidth.
\end{proof}
\end{sloppypar}

\begin{theorem}\label{thm:H planar is cool}
For every graph $H$, the following conditions are equivalent.
\begin{enumerate}
\item The class of $H$-minor-free graphs is $(\tw,\omega)$-bounded.\label[condition]{condition3.1}
\item The class of $H$-minor-free graphs has bounded treewidth.\label[condition]{condition3.2}
\item $H$ is planar.\label[condition]{condition3.3}
\end{enumerate}
\end{theorem}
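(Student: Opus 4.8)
The plan is to prove the three conditions equivalent via the same cycle of implications used for \cref{thm:forbidden subgraph,thm:forbidden topological minor}: first the equivalence \cref{condition3.1}$\Leftrightarrow$\cref{condition3.2}, and then \cref{condition3.3}$\Rightarrow$\cref{condition3.2}$\Rightarrow$\cref{condition3.3}. This minor version is in fact the cleanest of the three dichotomies, since, unlike the subgraph and topological-minor cases, it requires no auxiliary result transferring one containment relation to another (such as \cref{lemma:kS_ell as minor <=> kS_ell as subgraph} or \cref{wall tm or m is equivalent}): the implication from planarity to bounded treewidth is handed to us directly by the Robertson--Seymour theorem (\cref{planar minor implies bounded treewidth}).

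For \cref{condition3.1}$\Leftrightarrow$\cref{condition3.2}, I would observe that $H\subseteq_{s} G$ implies $H\subseteq_{m} G$, so every $H$-minor-free graph is $H$-subgraph-free and the class of $H$-minor-free graphs is contained in the class of $H$-subgraph-free graphs. The equivalence is then immediate from \cref{bounded treewidth}. For \cref{condition3.3}$\Rightarrow$\cref{condition3.2}, if $H$ is planar then \cref{planar minor implies bounded treewidth} directly gives that the class of $H$-minor-free graphs has bounded treewidth.

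It remains to prove \cref{condition3.2}$\Rightarrow$\cref{condition3.3}. Assuming the class of $H$-minor-free graphs has bounded treewidth, I would argue against the elementary walls: these have unbounded treewidth (as recalled in the proof of \cref{lemma:H is not any H}), so they cannot all be $H$-minor-free, since otherwise the class of $H$-minor-free graphs would contain a subfamily of unbounded treewidth. Hence $H$ is a minor of some elementary wall, and since every elementary wall is planar and the class of planar graphs is closed under taking minors, $H$ must be planar. I do not expect any genuine obstacle here, as the two nontrivial inputs (\cref{bounded treewidth,planar minor implies bounded treewidth}) are already available; the only point to handle with a little care is this last implication, where one invokes both that walls have unbounded treewidth and that planarity is preserved under taking minors, both of which are standard.
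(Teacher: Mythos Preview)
Your proposal is correct and matches the paper's proof essentially step for step: the equivalence of \cref{condition3.1} and \cref{condition3.2} via \cref{bounded treewidth} (using that $H$-minor-free implies $H$-subgraph-free), the implication \cref{condition3.3}$\Rightarrow$\cref{condition3.2} via \cref{planar minor implies bounded treewidth}, and the implication \cref{condition3.2}$\Rightarrow$\cref{condition3.3} by arguing that $H$ must be a minor of some elementary wall and hence planar. The only cosmetic difference is that you spell out explicitly that planarity is preserved under taking minors, whereas the paper leaves this implicit.
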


\begin{proof}
Since every $H$-minor-free graph is also $H$-subgraph-free, we can again invoke \cref{bounded treewidth} to infer that \cref{condition1.1,condition1.2} are equivalent.

Suppose that the class of $H$-minor-free graphs has bounded treewidth.
Since the class of elementary walls has unbounded treewidth, $H$ is a  minor of some elementary wall. Thus, $H$ is planar.

Finally, suppose that $H$ is planar. Then \cref{planar minor implies bounded treewidth} implies that the class of $H$-minor-free graphs has bounded treewidth.
\end{proof}

\section{Algorithmic implications of \texorpdfstring{$(\tw,\omega)$}{(tw,omega)}-boundedness}\label{sec:algo}

As explained in the introduction, the $(\tw,\omega)$-bounded classes having a computable binding function $f$ possess some algorithmically useful properties for variants of the clique and coloring problems.
All the $(\tw,\omega)$-bounded graph classes identified in this work have a computable binding function.
The $(\tw,\omega)$-boundedness of graph classes discussed in~\cref{sec:isitm,sec:im} is derived using either the structure of graphs in the resulting class (\cref{thm:forbidden induced subgraph,lem:block-cactus-graphs}), Ramsey's theorem (\cref{thm:forbidden induced subgraph,K2n-im-free graphs have bdd tw}), or graph minors theory (\cref{cor:K_5-,cor:W4}). 
In the former two cases, there exist binding functions that are explicit polynomials.
In the case of applications of graph minors theory, the key result to deriving those bounds is \cref{thm:FH}, the proof of which relies on results of \citeauthor{MR2802883}~\cite{MR2802883}. As explained in~\cite[Section 9]{MR3936170}, recent developments in the area of graph minors imply that these bounds are computable, too.
For the $(\tw,\omega)$-bounded graph classes discussed in~\cref{sec:stmm}, a result of Chuzhoy and Tan applies stating that if $G$ excludes a planar graph $H$ as a minor, then the treewidth of $G$ is $\mathcal{O}(|V(H)|^{9}\,\text{poly}\log |V(H)|)$~\cite{MR4155282}.
An explicit upper bound $\tw(G)\le 2^{15|V(H)|+8|V(H)|\log(|V(H)|)}$ was also shown in~\cite{MR3315599}.
For later use, we record this observation in the form of a theorem.

Let us denote by $\Sigma$ the family of $(tw,\omega)$-bounded graph classes excluding a fixed graph $H$ as a subgraph, a topological minor, or a minor (cf.\ the middle column of \cref{table-results}).
Similarly, we denote by $\Sigma_i$ the family of $(tw,\omega)$-bounded graph classes excluding a fixed graph $H$ as an induced subgraph, an induced topological minor, or an induced minor (cf.\ the right column of \cref{table-results}).

\begin{theorem}\label{thm:computable-binding-function}
Each graph class $\mathcal{G}\in \Sigma \cup \Sigma_i$ has a computable $(\tw,\omega)$-binding function,
which is constant if $\mathcal{G}\in \Sigma$.
\end{theorem}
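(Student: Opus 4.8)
The plan is to treat the two families separately, and to split $\Sigma_i$ further according to which of the three induced containment relations is responsible for membership. For $\Sigma$ the target is a \emph{constant} binding function, while for $\Sigma_i$ we only need computability, so the two halves require genuinely different arguments.

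First I would handle $\Sigma$. For any $\mathcal{G}\in\Sigma$, the characterizations in \cref{thm:forbidden subgraph,thm:forbidden topological minor,thm:H planar is cool} show that $(\tw,\omega)$-boundedness is equivalent to bounded treewidth, so the constant function $f\equiv c$ is already a binding function once we exhibit a constant $c$ bounding the treewidth of every graph in $\mathcal{G}$ and check that $c$ is computable from $H$. The key observation is that in each of the three cases the class coincides with (hence is contained in) the class of $H'$-minor-free graphs for a planar graph $H'$ obtainable from $H$: directly for the minor relation, via \cref{lemma:kS_ell as minor <=> kS_ell as subgraph} for the subgraph relation (so $H'=H\in\mathcal{S}$, which is planar), and via \cref{wall tm or m is equivalent} for the topological minor relation (so $H'=H$, subcubic and planar). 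The explicit bound of Chuzhoy and Tan, $\tw(G)=\mathcal{O}(|V(H')|^{9}\,\mathrm{poly}\log|V(H')|)$ \cite{MR4155282}, or alternatively $\tw(G)\le 2^{15|V(H')|+8|V(H')|\log(|V(H')|)}$ from \cite{MR3315599}, then yields a value of $c$ computable from $H$ and bounds $\tw(G)$ for all $G\in\mathcal{G}$ by \cref{planar minor implies bounded treewidth}. This settles $\Sigma$ with a constant, computable binding function.

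Next I would turn to $\Sigma_i$ and read off explicit binding functions class by class. For the induced subgraph relation, \cref{thm:forbidden induced subgraph} gives $f(k)=k-1$ or the Ramsey expression $f(k)=R(k+1,|V(H)|)-2$ from \cref{H is edgeless}; for the induced topological minor relation, \cref{thm:itm-free} gives $f(k)=k-1$, $f(k)=\max\{k-1,2\}$ via \cref{lem:block-cactus-graphs}, or again the Ramsey expression; and for the $K_{2,q}$ and $K_{2,q}^+$ subcases of the induced minor relation, \cref{K2n-im-free graphs have bdd tw} gives the explicit $f(k)=\max\{k,2R(k+1,q)-2\}-1$ (with the $K_{2,q}^+$ case reduced to $K_{2,q+1}$ exactly as in the proof of \cref{thm:necessary forbidden induced minors}). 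Computability for all of these reduces to computability of the relevant Ramsey numbers, which is immediate from the standard bound $R(k,\ell)\le\binom{k+\ell-2}{k-1}$; indeed these functions are even explicit polynomials in $k$ once $|V(H)|$ or $q$ is fixed.

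The remaining and genuinely harder cases are $H\subseteq_{is}W_4$ and $H\subseteq_{is}K_5^-$ from the induced minor column, established in \cref{cor:W4,cor:K_5-}. Here the binding function is obtained only indirectly: \cref{lemma:W_4 im-free,lemma:Kn-1 im-free is (h omega)-bounded} supply a linear $(\h,\omega)$-binding function, \cref{corollary:hadwiger-planar} converts this into a $(\tw,\omega)$-binding function using \cref{lemma:Kn and planar H imf implies bounded tw}, and the latter rests on \cref{thm:FH}. The main obstacle is that the treewidth bound in \cref{thm:FH} is extracted from a proof that, in its original form, is non-constructive, so it does not \emph{a priori} yield a computable function. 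I would resolve this exactly as flagged in the paragraph preceding the statement: recent developments in graph minors theory render the bounds of \cref{thm:FH} computable~\cite[Section~9]{MR3936170}. Tracing the dependence on $k$ through \cref{corollary:hadwiger-planar} then shows that the resulting $(\tw,\omega)$-binding functions for the $W_4$- and $K_5^-$-induced-minor-free classes are computable, completing $\Sigma_i$ and hence the theorem.
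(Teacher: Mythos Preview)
Your proposal is correct and follows essentially the same approach as the paper, which does not give a separate proof but records the theorem as a summary of the discussion immediately preceding it: explicit (polynomial, Ramsey-based, or structural) binding functions for the induced-subgraph, induced-topological-minor, and $K_{2,q}$/$K_{2,q}^+$ induced-minor cases; computability via \cite[Section~9]{MR3936170} for the $W_4$ and $K_5^-$ induced-minor cases through \cref{thm:FH}; and the Chuzhoy--Tan bound (or the explicit bound from~\cite{MR3315599}) for the constant binding function in the $\Sigma$ cases. Your write-up is simply a more detailed unpacking of that same argument.
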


As already mentioned in the introduction, Chaplick and Zeman showed in~\cite{DBLP:journals/endm/ChaplickZ17} that for every positive integer $k$, there exists a linear-time algorithm for the \textsc{$k$-Clique} and \textsc{List $k$-Coloring} problems in any $(\tw,\omega)$-bounded graph class having a computable binding function.
Combining this result with~\cref{thm:computable-binding-function} yields the following.

\begin{corollary}\label{algorithmic-k-clique-k-col}
For every positive integer $k$, there exists a linear-time algorithm for the \textsc{$k$-Clique} and \textsc{List $k$-Coloring} problems in each graph class $\mathcal{G}\in \Sigma \cup \Sigma_i$. In particular, this holds when $\mathcal{G}$ is either the class of block-cactus graphs or the class
of $H$-induced-minor-free graphs for some
$H\in \{W_4,K_5^-\}\cup\{K_{2,q}\mid q\in \mathbb{N}\}$.
\end{corollary}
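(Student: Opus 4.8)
The plan is to combine the algorithmic result of Chaplick and Zeman~\cite{DBLP:journals/endm/ChaplickZ17} with the computability guarantee provided by \cref{thm:computable-binding-function}. Chaplick and Zeman showed that, for every fixed positive integer $k$, both \textsc{$k$-Clique} and \textsc{List $k$-Coloring} admit a linear-time algorithm in any $(\tw,\omega)$-bounded graph class equipped with a computable $(\tw,\omega)$-binding function. By \cref{thm:computable-binding-function}, every class $\mathcal{G}\in \Sigma \cup \Sigma_i$ possesses such a function. Hence the general assertion follows immediately: for a fixed $k$, we instantiate the Chaplick--Zeman algorithm with the computable binding function of $\mathcal{G}$ supplied by \cref{thm:computable-binding-function}, and obtain linear-time algorithms for both problems.

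For the ``in particular'' clause, I would verify that each named class is a member of $\Sigma_i$. By \cref{diamond-itm-free graphs are block-cactus-graphs}, the class of block-cactus graphs coincides with the class of $K_4^-$-induced-topological-minor-free graphs, which is $(\tw,\omega)$-bounded by \cref{thm:itm-free} and therefore belongs to $\Sigma_i$. For each $H\in \{W_4,K_5^-\}\cup\{K_{2,q}\mid q\in \mathbb{N}\}$, the class of $H$-induced-minor-free graphs is $(\tw,\omega)$-bounded by \cref{cor:W4}, \cref{cor:K_5-}, and \cref{K2n-im-free graphs have bdd tw}, respectively, and so also lies in $\Sigma_i$. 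Applying the general statement to each of these classes then yields the claimed linear-time algorithms.

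Since no genuinely new argument is needed beyond these citations, there is no substantive mathematical obstacle; the one point worth care is bookkeeping around computability, which the Chaplick--Zeman framework requires. For block-cactus graphs and for $K_{2,q}$-induced-minor-free graphs the binding functions are given by explicit formulas (\cref{lem:block-cactus-graphs} and \cref{K2n-im-free graphs have bdd tw}, the latter through Ramsey numbers), so their computability is transparent. For $W_4$- and $K_5^-$-induced-minor-free graphs the bounds pass through \cref{thm:FH} and thus through graph-minors machinery; here computability is the only delicate ingredient, and it is precisely what \cref{thm:computable-binding-function} imports. Thus the main thing to confirm is that every class in the ``in particular'' clause has been correctly matched to a member of $\Sigma_i$ whose binding function \cref{thm:computable-binding-function} certifies as computable.
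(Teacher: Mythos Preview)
Your proposal is correct and follows essentially the same approach as the paper: the paper derives the corollary in one sentence by combining the Chaplick--Zeman result with \cref{thm:computable-binding-function}, and your write-up does the same, merely adding the (harmless) verification that each class named in the ``in particular'' clause indeed lies in $\Sigma_i$.
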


\citeauthor{MR3992956} gave in~\cite{MR3992956} a polynomial-time algorithm for the \textsc{Maximum Weight Clique} problem in a class of graphs generalizing the class of $1$-perfectly orientable graphs. They asked about the complexity of the \textsc{Maximum Independent Set} and \hbox{\textsc{$k$-Coloring}} problems in the  class of $1$-perfectly orientable graphs. Since every $1$-perfectly orientable graph is $K_{2,3}$-induced-minor-free, \cref{algorithmic-k-clique-k-col} answers, in a much greater generality, the question by \citeauthor{MR3992956} on the complexity of $k$-coloring $1$-perfectly orientable graphs.

Given the useful algorithmic properties of $(\tw,\omega)$-bounded graph classes, it would be good to have a polynomial-time recognition algorithm for graphs in any such class.
Graphs $G$ excluding a fixed graph $H$ either as a minor or as a topological minor can be recognized in time $\mathcal{O}(|V(G)|^3)$~\cite{MR1309358,MR2931998}. Clearly, graphs excluding a fixed graph $H$ as a subgraph or an induced subgraph can be recognized in polynomial time, simply by checking all the $\mathcal{O}(|V(G)|^{|V(H)|})$ subsets of vertices of size $|V(H)|$.
The situation is less clear for graphs excluding a single induced minor or induced topological minor, as there exist graphs $H$ such that it is \textsf{co-NP}-complete to recognize $H$-induced-minor-free graphs or $H$-induced-topological-minor-free graphs~\cite{MR1308575,MR2551944}.
Nevertheless, for all the $(\tw,\omega)$-bounded classes of $H$-induced-topological-minor-free graphs (characterized by \cref{thm:itm-free}), the recognition problem is easily observed to be polynomial-time solvable due to the special structure of these graph classes.
They are the classes of chordal graphs (if $H \cong C_4$), of block-cactus graphs (if $H \cong K_4^-$), of $P_3$-free graphs (if $H \cong P_3$), of acyclic graphs (if $H \cong C_3$), of edgeless graphs (if $H \cong P_2$), and of graphs of bounded independence number (if $H$ is edgeless).
Furthermore, it can be seen that a graph $G$ has $K_{1,q}$  as an induced minor if and only if $G$ has an independent set $S$ of size $q$ such that for some connected component $C$ of $G-S$, every vertex in $S$ has a neighbor in $C$. This implies that the recognition problem for the class of $K_{1,q}$-induced-minor-free graphs is polynomial-time solvable.
Among the $(\tw,\omega)$-bounded classes of $H$-induced-minor-free graphs (cf.~\cref{thm:necessary forbidden induced minors}), the cases when $H\in \{C_4, K_4^-, C_3, P_3, P_2\}$ or $H$ is edgeless are the same as above and hence recognizable in polynomial time.
The complexity of recognition remains open for $H \cong W_4$, $H \cong K_5^-$, or $H \cong K_{2,q}$ or $H \cong K_{2,q}^+$ for some $q\ge 3$.
As we explain next, this is not necessarily a problem.

As shown by Chaplick and Zeman, for every
$(\tw,\omega)$-bounded class $\mathcal{G}$ with a computable binding function and for every fixed $k$, \textsc{List $k$-Coloring} is solvable in linear time
for graphs in $\mathcal{G}$~\cite{DBLP:journals/endm/ChaplickZ17}.
If we are satisfied with polynomial running time, we can extend their approach to obtain an algorithm for \textsc{List $k$-Coloring} that is \emph{robust} in the sense of Raghavan and Spinrad~\cite{MR2006100}: it either solves the problem or determines that the input graph is not in $\mathcal{G}$.

\begin{theorem}\label{thm:motivation}
Let $\mathcal{G}$ be a $(\tw,\omega)$-bounded graph class having a computable $(\tw,\omega)$-binding function $f$. Then, for every positive integer $k$ there exists a robust polynomial-time algorithm for the \textsc{List $k$-coloring} problem on graphs in $\mathcal{G}$.
\end{theorem}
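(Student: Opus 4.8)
The plan is to combine a cheap preprocessing step that disposes of large cliques with the observation that, for graphs in $\mathcal{G}$, a bounded clique number forces bounded treewidth, after which \textsc{List $k$-Coloring} is solved by dynamic programming. The point of the construction is that the treewidth bound supplied by the binding function can itself serve as a membership certificate, which is what yields robustness despite the absence of any recognition algorithm for $\mathcal{G}$.

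Fix $k$. Since every list coloring uses colors only from $\{1,\dots,k\}$, any graph admitting a list $k$-coloring satisfies $\chi(G)\le k$ and hence $\omega(G)\le k$. First I would test whether $G$ contains a clique of size $k+1$ by brute force over all $(k+1)$-subsets of $V(G)$; as $k$ is fixed this runs in time $\mathcal{O}(|V(G)|^{k+1})$. If such a clique exists, then $G$ admits no list $k$-coloring and the algorithm outputs ``no''. From now on we may assume $\omega(G)\le k$.

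Next, set $c_k=\max\{f(1),\dots,f(k)\}$, a constant computable from $f$ and $k$. The key inequality is that every graph $G\in\mathcal{G}$ with $\omega(G)\le k$ satisfies $\tw(G)\le f(\omega(G))\le c_k$. I would then run Bodlaender's linear-time algorithm with parameter $c_k$, which either returns a tree decomposition of $G$ of width at most $c_k$ or correctly reports $\tw(G)>c_k$. In the former case I would convert it to a nice tree decomposition and solve \textsc{List $k$-Coloring} by the standard bottom-up dynamic program: each bag has at most $c_k+1$ vertices, so only at most $k^{c_k+1}$ list-respecting colorings of a bag need be tracked, and the whole computation runs in time linear in $|V(G)|$ for fixed $k$ and $c_k$; the algorithm outputs the resulting answer. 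In the latter case we have $\omega(G)\le k$ yet $\tw(G)>c_k\ge f(\omega(G))$, which is impossible for a graph in $\mathcal{G}$, so the algorithm reports that $G\notin\mathcal{G}$.

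It remains to check robustness and the running time. If $G\in\mathcal{G}$, then either $\omega(G)>k$, in which case the clique test correctly answers ``no'', or $\omega(G)\le k$, in which case $\tw(G)\le c_k$, so Bodlaender's algorithm succeeds and the dynamic program returns the correct answer; in neither case is $G\notin\mathcal{G}$ erroneously declared. Conversely, whenever the algorithm declares $G\notin\mathcal{G}$, it has certified $\omega(G)\le k$ together with $\tw(G)>c_k\ge f(\omega(G))$, which genuinely contradicts $(\tw,\omega)$-boundedness; in all remaining cases the algorithm outputs a correct answer to \textsc{List $k$-Coloring}. The total time is dominated by the clique test and is therefore polynomial. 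The only real subtlety is precisely this robustness bookkeeping: because no recognition procedure for $\mathcal{G}$ is available, one must guarantee that the sole failure branch, namely Bodlaender reporting large treewidth \emph{after} the clique test has ruled out $\omega(G)>k$, can occur only for graphs outside $\mathcal{G}$, which is exactly what the binding function ensures.
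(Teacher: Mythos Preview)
Your proof is correct and follows essentially the same route as the paper: test for a $(k{+}1)$-clique, set $c_k=\max\{f(1),\dots,f(k)\}$, invoke Bodlaender's algorithm to either certify $\tw(G)>c_k$ (hence $G\notin\mathcal{G}$) or produce a bounded-width tree decomposition, and then solve \textsc{List $k$-Coloring} on that decomposition. The only cosmetic difference is that the paper cites the Jansen--Scheffler algorithm for the final step, whereas you describe the standard bounded-treewidth dynamic program directly.
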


\begin{proof}
The algorithm is as follows. First, we test whether the input graph $G$ contains a clique of size $k+1$ in time $\mathcal{O}(|V(G)|^{k+1})$. If it does, then $G$ is not \hbox{$k$-colorable}. Suppose it does not. Then $\omega(G)\le k$. In particular, this means that if $G\in \mathcal{G}$, then
$\tw(G)\le f(\omega(G))\le c_k$, where $c_k = \max\{f(1),\ldots, f(k)\}$.
Using the linear-time algorithm of Bodlaender~\cite{MR1417901}, we test whether $\tw(G)\le c_k$.
If $\tw(G)>c_k$, then $G\not\in \mathcal{G}$ and the algorithm returns the message ``$G\not\in \mathcal{G}$.''
If $\tw(G)\le c_k$, then the algorithm of Bodlaender actually computes a tree decomposition of $G$ of width at most $c_k$. Using this tree decomposition, we can now invoke a result of Jansen and Scheffler~\cite{MR1451958} to test in linear time whether $G$ is $k$-colorable with respect to the given lists of available colors for each vertex.

The correctness of the algorithm is obvious.
The running time of the algorithms by Bodlaender and by Jansen and Scheffler is $\mathcal{O}(g(c_k)(|V(G)|+|E(G)|))$ and $\mathcal{O}(h(c_k)(|V(G)|+|E(G)|))$, respectively, for some functions $g$ and $h$ depending only on $c_k$ (and thus only on $k$). Thus, the total running time of the algorithm is $\mathcal{O}(|V(G)|^{k+1} + (g(c_k)+h(c_k))(|V(G)|+|E(G)|))$, which is polynomial in the input size for every fixed value of $k$.
\end{proof}

We next discuss some possible implications of $(\tw,\omega)$-boundedness for improved approximations for the \textsc{Maximum Clique} problem: given a graph $G$, find a maximum clique in $G$.
For general graphs, this problem is notoriously difficult to approximate: for every $\varepsilon>0$, there is no polynomial-time algorithm for approximating the maximum clique in an $n$-vertex graph to within a factor of $n^{1-\varepsilon}$ unless $\P = \NP$ \cite{MR2403018}.
An approximation algorithm for an optimization problem is typically required to compute a feasible solution to the problem.
As we explain next, for $(\tw,\omega)$-bounded graph classes with a polynomial binding function, known approximation algorithms for treewidth (see, e.g.,~\cite{MR2411037}) lead to an improved approximability bound, provided that we allow the algorithm to output only a number approximating the value of the optimal solution and not the approximate solution itself.
We denote by \textsf{opt} the optimal solution value of the maximum clique problem on the input graph $G$, that is, $\omega(G)$.

\begin{theorem}\label{thm:motivation-2}
Let $\mathcal{G}$ be a graph class having a computable polynomial $(\tw,\omega)$-binding function \hbox{$f(k) = \mathcal{O}(k^c)$} for some constant $c$. Then, for all $\varepsilon>0$ the clique number can be approximated for graphs in $\mathcal{G}$ in polynomial time to within a factor of
$\textsf{opt}^{1-1/(c+\varepsilon)}$.
\end{theorem}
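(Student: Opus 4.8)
The plan is to turn an approximation of $\tw(G)$ into an approximation of $\omega(G)$, exploiting the two inequalities that pin $\omega(G)$ between a root of $\tw(G)$ and $\tw(G)$ itself. First I would run a polynomial-time treewidth approximation (for instance the one referenced in~\cite{MR2411037}) to compute a value $\tilde t$ with $\tw(G)\le \tilde t \le \beta\cdot\tw(G)$, where $\beta$ is the (slowly growing) approximation factor. Crucially I only need this \emph{value}, not a decomposition or an actual clique, which is exactly what licenses returning a number approximating $\omega(G)$ rather than a feasible solution.

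Next I would set up the sandwich for $\omega(G)$. On one side, \cref{thm:chordal} gives $\omega(G)\le \tw(G)+1\le \tilde t+1$. On the other side, the polynomial binding function gives $\tw(G)\le f(\omega(G))\le a\,\omega(G)^{c}$ for some constant $a$, whence $\tilde t\le \beta a\,\omega(G)^{c}$ and therefore $\omega(G)\ge(\tilde t/(\beta a))^{1/c}$. Thus after a single treewidth computation the algorithm knows that $\omega(G)$ lies in the interval $\big[(\tilde t/(\beta a))^{1/c},\,\tilde t+1\big]$, whose endpoints differ by roughly the $(1-1/c)$-th power of $\tilde t$.

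The algorithm then outputs $N:=\lceil(\tilde t+1)^{1/(c+\varepsilon)}\rceil$, and correctness splits into the two ratios. For the underestimation, $\tilde t+1\ge\omega(G)$ gives $N\ge\omega(G)^{1/(c+\varepsilon)}$ immediately, so $\omega(G)/N\le\omega(G)^{1-1/(c+\varepsilon)}=\textsf{opt}^{1-1/(c+\varepsilon)}$ with no hidden constant. For the overestimation, $\tilde t+1\le\beta a\,\omega(G)^{c}+1$ gives $N\le b\,\omega(G)^{c/(c+\varepsilon)}$ for a constant $b$; since $c/(c+\varepsilon)<1$ the exponent of $\omega(G)$ here is strictly below $1$, and the slack $1-c/(c+\varepsilon)=\varepsilon/(c+\varepsilon)$ is precisely what absorbs $b$, so that $N/\omega(G)\le\textsf{opt}^{1-1/(c+\varepsilon)}$ once $\textsf{opt}$ exceeds a fixed threshold. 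Clique numbers below that threshold I would compute exactly by brute force over all vertex subsets of bounded size, which runs in polynomial time for a fixed threshold and returns the exact value, so the claimed factor holds there trivially.

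I expect the main obstacle to be the exponent bookkeeping in the overestimation bound: the multiplicative constant hidden in $f(k)=\mathcal{O}(k^{c})$ together with the approximation factor $\beta$ must be folded into a power of $\textsf{opt}$, and it is exactly the gap between $1/c$ and $1/(c+\varepsilon)$ — the reason the statement carries an $\varepsilon$ instead of using $1/c$ directly — that creates the room to do so. The only other point requiring care is a clean split between the ``large $\textsf{opt}$'' regime, controlled by the treewidth bound, and the ``small $\textsf{opt}$'' regime, settled by exact search, so that the factor $\textsf{opt}^{1-1/(c+\varepsilon)}$ holds uniformly.
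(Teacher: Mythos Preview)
Your proposal is correct and follows essentially the same approach as the paper: approximate the treewidth in polynomial time (the paper uses the algorithm of~\cite{MR2411037} with factor $\mathcal{O}(\sqrt{\log \tw(G)})$), sandwich $\omega(G)$ between $(\tilde t+1)^{1/(c+\varepsilon)}$ and $\tilde t+1$ using \cref{thm:chordal} and the binding function, output the lower end of that interval, and handle small $\omega(G)$ by brute force, with the $\varepsilon$ in the exponent absorbing both the treewidth approximation factor and the hidden constants. The only cosmetic difference is that the paper observes its output is already a lower bound on $\omega(G)$ once $\omega(G)$ is large, so it does not separately argue the overestimation side as you do.
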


\begin{proof}
Fix an $\varepsilon>0$ and let $G\in \mathcal{G}$. Using the algorithm of \citeauthor{MR2411037}~\cite{MR2411037}, we can compute in polynomial time a tree decomposition of $G$ of width
\[
    t = \mathcal{O}(\tw(G)\sqrt{\log \tw(G)})\,.
\]
Since $G\in \mathcal{G}$, the assumption on $\mathcal{G}$ implies that $\tw(G)= \mathcal{O}(\omega(G)^c)$.
Consequently,
\[
    t+1 = \mathcal{O}(\omega(G)^c\sqrt{\log (\omega(G)^c)}) =
    \mathcal{O}(\omega(G)^c(\log (\omega(G)))^{1/2})\,.
\]
This implies that $t+1\le \omega(G)^{c+\varepsilon}$ as soon as $\omega(G)\ge k_0$ for a suitable constant $k_0$ depending only on $\varepsilon$, $c$, and the constants hidden in the $\mathcal{O}$ notation of the approximation ratio of the algorithm of \citeauthor{MR2411037} and of the binding function.
Note that the assumption $\omega(G)\ge k_0$ is without loss of generality since otherwise we can compute $\omega(G)$ in polynomial time.
We thus have $\omega(G) \ge (t+1)^{1/(c+\varepsilon)}$, and this lower bound can be computed in polynomial time.
Since $\omega(G)-1\le \tw(G)\le t$, we have $(t+1)^{1/(c+\varepsilon)}\ge \omega(G)^{1/(c+\varepsilon)}$. This means that the lower bound
$(t+1)^{1/(c+\varepsilon)}$ approximates the value of the clique number $\omega(G)$ to within a factor of $\omega(G)^{1-1/(c+\varepsilon)}$, as claimed.
\end{proof}

Note that unless $\P = \NP$, the result of~\cref{thm:motivation-2} cannot be improved by means of using a polynomial-time algorithm for computing the treewidth in  $(\tw,\omega)$-bounded graph classes (which would allow taking $\varepsilon = 0$), since there exist graph classes with a linear $(\tw,\omega)$-binding function in which the treewidth is \NP-hard to compute. In fact, the original \NP-hardness proof for computing the treewidth due to \citeauthor{MR881187}~\cite{MR881187} produces co-bipartite graphs, and since the vertex set of every co-bipartite graph $G$ can be covered by two cliques, we have $\tw(G)\le |V(G)|-1 \le 2\omega(G)-1$.

\begin{corollary}\label{cor:approx-omega-in-linearly-tw-omega-bounded-classes}
Let $\mathcal{G}$ be a graph class having a computable linear $(\tw,\omega)$-binding function. Then, for all $\varepsilon>0$ the clique number can be approximated for graphs in $\mathcal{G}$ in polynomial time to within a factor of $\textsf{opt}^{\varepsilon}$.
\end{corollary}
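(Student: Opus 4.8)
The plan is to derive this directly from \cref{thm:motivation-2}, which already handles arbitrary polynomial binding functions $f(k)=\mathcal{O}(k^c)$. A linear binding function is precisely the special case $c=1$, so the entire content of the corollary should reduce to specializing the exponent in that theorem and comparing it with $\varepsilon$.

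First I would fix $\varepsilon>0$ and a graph $G\in\mathcal{G}$, and invoke \cref{thm:motivation-2} with $c=1$ and the same $\varepsilon$. This yields, in polynomial time, a number that approximates $\omega(G)$ to within a factor of $\textsf{opt}^{1-1/(1+\varepsilon)}$. The only remaining step is the elementary estimate $1-\tfrac{1}{1+\varepsilon}=\tfrac{\varepsilon}{1+\varepsilon}\le \varepsilon$, which is valid because $1+\varepsilon\ge 1$. Since $\textsf{opt}=\omega(G)\ge 1$ for any graph with at least one vertex, the map $x\mapsto \textsf{opt}^{\,x}$ is nondecreasing, so $\textsf{opt}^{\,1-1/(1+\varepsilon)}\le \textsf{opt}^{\,\varepsilon}$. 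An algorithm whose approximation factor is at most $\textsf{opt}^{\,\varepsilon}$ is in particular an $\textsf{opt}^{\,\varepsilon}$-approximation, which is exactly the claimed bound.

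I expect no genuine obstacle here: the statement is a clean corollary, and the argument is a one-line specialization of the preceding theorem. The only points warranting a sentence of care are the monotonicity of $x\mapsto\textsf{opt}^{\,x}$, which requires $\textsf{opt}\ge 1$ and hence a trivial treatment of the edgeless or single-vertex case (where $\omega(G)$ is computed directly), and the observation that passing from the target factor $\textsf{opt}^{\,\varepsilon}$ to the smaller quantity $\textsf{opt}^{\,\varepsilon/(1+\varepsilon)}$ produced by \cref{thm:motivation-2} only strengthens the guarantee. Neither point requires any computation beyond the displayed inequality.
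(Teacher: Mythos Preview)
Your proposal is correct and matches the paper's intended derivation: the corollary is stated without proof immediately after \cref{thm:motivation-2}, and the intended argument is precisely the specialization $c=1$ together with the elementary estimate $1-\tfrac{1}{1+\varepsilon}=\tfrac{\varepsilon}{1+\varepsilon}\le \varepsilon$ that you spell out.
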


Note that the result of \cref{cor:approx-omega-in-linearly-tw-omega-bounded-classes} cannot be improved to a polynomial-time approximation scheme for the maximum clique problem, unless $\P = \NP$, as there exist graph classes with a linear $(\tw,\omega)$-binding function in which the clique number is \APX-hard to compute; see~\cite{DBLP:journals/endm/ChaplickZ17}.

For exponential binding functions, the same approach leads to an improvement over the trivial $\textsf{opt}$-approximation to the maximum clique (return any vertex), as follows.

\begin{theorem}\label{thm:motivation-2'}
Let $\mathcal{G}$ be a graph class having a computable exponential $(\tw,\omega)$-binding function $f$, say $f(k) = \mathcal{O}(c^k)$ for some constant $c>1$. Then, the clique number can be approximated for graphs in $\mathcal{G}$ in polynomial time to within a factor of $\mathcal{O}(\textsf{opt}/\log \textsf{opt})$.
\end{theorem}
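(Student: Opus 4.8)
The plan is to follow the strategy of \cref{thm:motivation-2}, replacing the polynomial bound on the treewidth by the exponential one, so that the natural lower bound on $\omega(G)$ becomes \emph{logarithmic} in the approximated width rather than a polynomial root of it. As before, I would first dispose of small clique numbers: for a suitable constant $k_0$ (to be fixed below), test by brute force whether $G$ contains a clique of size $k_0$; if not, then $\omega(G)<k_0$ and its exact value can be found in polynomial time by checking all vertex subsets of size less than $k_0$, and we output it. So from now on assume $\omega(G)\ge k_0$.

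Next, using the treewidth approximation algorithm of \citeauthor{MR2411037}~\cite{MR2411037}, compute in polynomial time a tree decomposition of $G$ of width $t=\mathcal{O}(\tw(G)\sqrt{\log \tw(G)})$; in particular $\omega(G)-1\le \tw(G)\le t$. Since $G\in \mathcal{G}$, the binding function gives $\tw(G)\le f(\omega(G))=\mathcal{O}(c^{\omega(G)})$, whence $\log \tw(G)=\mathcal{O}(\omega(G))$ and therefore
\[
    t=\mathcal{O}\!\left(c^{\omega(G)}\sqrt{\omega(G)}\right)\,.
\]
Taking logarithms, $\log t = \omega(G)\log c + \mathcal{O}(\log \omega(G))$, and since the additive $\mathcal{O}(\log\omega(G))$ term is of lower order than $\omega(G)\log c$, there is a constant $c'>0$ such that $\omega(G)\ge c'\log t$ holds whenever $\omega(G)\ge k_0$. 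The quantity $c'\log t$ is computable, since the constants hidden in $f$ and in the approximation algorithm are known and hence $c'$ and $k_0$ can be fixed explicitly. We output $L=\lfloor c'\log t\rfloor$ as the approximate value of the clique number.

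It remains to bound the approximation ratio. By construction $L\le \omega(G)$, so $L$ is a valid lower bound. For the ratio, note that $t\ge \tw(G)\ge \omega(G)-1$ gives $\log t = \Omega(\log \omega(G))$, so $L=\Omega(\log\omega(G))$ and hence
\[
    \frac{\omega(G)}{L}=\mathcal{O}\!\left(\frac{\omega(G)}{\log \omega(G)}\right)=\mathcal{O}\!\left(\frac{\textsf{opt}}{\log \textsf{opt}}\right)\,,
\]
as claimed. The only delicate point is the logarithmic estimate: one must verify that the $\sqrt{\log \tw(G)}$ factor coming from the treewidth approximation, together with the additive lower-order terms, does not interfere with the leading $\omega(G)\log c$ behaviour, so that a single constant $c'$ and threshold $k_0$ work uniformly. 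This is precisely where the exponential (rather than polynomial) growth of $f$ enters, turning the approximated width into a logarithmically sharp estimator of $\omega(G)$.
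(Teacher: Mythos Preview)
Your proof is correct and follows essentially the same route as the paper's: both apply the treewidth approximation of \citeauthor{MR2411037} to obtain a width $t=\mathcal{O}(c^{\omega(G)}\sqrt{\omega(G)})$, then take logarithms to extract a lower bound of order $\log t=\Omega(\log\omega(G))$ on the clique number. Your version is slightly more explicit in handling the small-clique threshold $k_0$ by brute force and in isolating the constant $c'$, but the argument is otherwise identical to the paper's.
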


\begin{proof}
Let $G\in \mathcal{G}$. Using the algorithm of \citeauthor{MR2411037}~\cite{MR2411037}, we can compute in polynomial time a tree decomposition of $G$ of  width $t = \mathcal{O}(\tw(G)\sqrt{\log \tw(G)})$. Since $G\in \mathcal{G}$, we have $\tw(G)= \mathcal{O}(c^{\omega(G)})$.
Consequently,
$$t =
\mathcal{O}\left(c^{\omega(G)}\sqrt{\log (c^{\omega(G)})}\right) =
\mathcal{O}(c^{\omega(G)}\sqrt{\omega(G)})\,.$$
It follows that in polynomial time we can compute a lower bound on the clique number $\omega(G)$ of the form
$\Omega(\log t)$. Since $\omega(G)-1\le \tw(G)\le t$, this lower bound is of the order
$\Omega(\log \omega(G))$. This means that it approximates the value of the clique number $\omega(G)$ to within a factor of
$\mathcal{O}(\omega(G)/\log \omega(G))$.
\end{proof}

\begin{remark}
The proofs of~\cref{thm:motivation,thm:motivation-2,thm:motivation-2'} remain valid as soon as the inequality $\tw(G)\le f(\omega(G))$ holds for all graphs $G\in \mathcal{G}$, and not necessarily for all induced subgraphs of graphs in $\mathcal{G}$.
Thus, all these results, along with \cref{cor:approx-omega-in-linearly-tw-omega-bounded-classes}, also hold for such more general graph classes, which need not be closed under induced subgraphs.
\end{remark}

\section{Remarks on \texorpdfstring{$(\tw,\omega)$}{(tw,omega)}-binding functions of graphs of bounded independence number}\label{sec:lower-bounds}

We now derive bounds on the degrees of the
$(\tw,\omega)$-binding polynomials for classes of graphs of bounded independence number.
First, note that \cref{thm:forbidden induced subgraph} implies that for all positive integers $q$, the class of graphs with independence number less than $q$ is $(\tw,\omega)$-bounded with a polynomial binding function $f(k) = \mathcal{O}(k^{q-1})$.
As observed by Trotignon and Pham~\cite{MR3789676} (see also~\cite{esperet2017graph,MR4174126}), classes of graphs of bounded independence number form a family of polynomially $\chi$-bounded graph classes that require $\chi$-binding polynomials of arbitrarily large degrees.
Note that for every $(\tw,\omega)$-binding function $f(k)$ for some graph class, the function $f(k)+1$ is a $\chi$-binding function for the same class.
Therefore, classes of graphs of bounded independence number also require $(\tw,\omega)$-binding polynomials of arbitrarily large degrees.
For the sake of completeness, we include the proof with slightly better bounds than the ones that follow from~\cite{MR3789676}.

The proof of the lower bound is based on the following lower bounds on the Ramsey numbers proved by Spencer~\cite{MR491337}.

\begin{theorem}\label{thm:Ramsey-lower-bound}
For every integer $q\ge 3$ there exists a constant $c_q>0$ and a positive integer $k_q$ such that for all $k\ge k_q$ we have
$$R(k,q) \ge c_q\left(\frac{k}{\log k}\right)^{(q+1)/2}\,.$$
\end{theorem}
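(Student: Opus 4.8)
The plan is to prove this lower bound by the probabilistic method, following Spencer's original use of the Lov\'asz Local Lemma. It suffices to construct, for each fixed $q\ge 3$ and all sufficiently large $k$, a graph on $N=\lfloor c_q(k/\log k)^{(q+1)/2}\rfloor$ vertices having no clique of size $k$ and no independent set of size $q$, since the existence of such a graph witnesses $R(k,q)>N$. I would obtain this graph as a random graph $G(N,1-s)$ in which each edge is present independently with probability $1-s$, where $s=s(N,k)$ is a small parameter of order $(\log k)/k$ to be fixed at the end (so the graph is dense, which suppresses small independent sets, while large cliques are suppressed automatically). I then introduce two families of bad events: for each $k$-subset $S$, the event $A_S$ that $S$ is a clique, with $\Pr[A_S]=(1-s)^{\binom{k}{2}}\le e^{-s\binom{k}{2}}$; and for each $q$-subset $T$, the event $B_T$ that $T$ is independent, with $\Pr[B_T]=s^{\binom{q}{2}}$. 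Avoiding all of these simultaneously yields the desired graph, so the whole problem reduces to verifying a Local Lemma condition.

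Two events are adjacent in the dependency graph exactly when their vertex sets share at least two vertices (hence a common potential edge), and a routine count bounds the four relevant degrees by $\Delta_{AA}\le\binom{k}{2}\binom{N}{k-2}$, $\Delta_{AB}\le\binom{k}{2}\binom{N}{q-2}$, $\Delta_{BA}\le\binom{q}{2}\binom{N}{k-2}$, and $\Delta_{BB}\le\binom{q}{2}\binom{N}{q-2}$. I would then apply the asymmetric Lov\'asz Local Lemma with a common weight $y$ for the clique events and a common weight $z$ for the independence events, so that it suffices to verify
\[
(1-s)^{\binom{k}{2}}\le y(1-y)^{\Delta_{AA}}(1-z)^{\Delta_{AB}}
\quad\text{and}\quad
s^{\binom{q}{2}}\le z(1-y)^{\Delta_{BA}}(1-z)^{\Delta_{BB}}.
\]
The natural choices are $y\approx 1/\Delta_{AA}$ and $z\approx e\,s^{\binom{q}{2}}$. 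With these, the cheap factors are controlled: $(1-y)^{\Delta_{BA}}\approx 1$ because $y\Delta_{BA}\approx\binom{q}{2}/\binom{k}{2}\to 0$, and $(1-z)^{\Delta_{BB}}\ge 1/e$ provided $z\Delta_{BB}=O(1)$, i.e.\ $s^{\binom{q}{2}}N^{q-2}\lesssim 1$; this last is the milder of the two size constraints on $s$ and will be implied by the sharper one below.

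The crux, and the step I expect to be the main obstacle, is the first inequality, which couples the two event types through the factor $(1-z)^{\Delta_{AB}}\approx\exp(-z\Delta_{AB})$ with $\Delta_{AB}\approx\binom{k}{2}N^{q-2}/(q-2)!$. The naive demand $z\Delta_{AB}=O(1)$ would force $s$ far too small and yield a much weaker exponent; the essential idea is instead to exploit the super-exponential smallness of $\Pr[A_S]=e^{-\Theta(sk^2)}$ and merely \emph{balance} $z\Delta_{AB}$ against $s\binom{k}{2}$. Taking logarithms, the first inequality reduces to
\[
s\binom{k}{2}\gtrsim\log\Delta_{AA}+z\,\Delta_{AB}
=\Theta(k\log k)+\Theta\!\left(s^{\binom{q}{2}}\binom{k}{2}N^{q-2}\right),
\]
which after dividing by $\binom{k}{2}=\Theta(k^2)$ splits into the two requirements $s\gtrsim c_1(\log k)/k$ (so that cliques of size $k$ are rare enough) and $s^{\binom{q}{2}-1}N^{q-2}\lesssim 1$. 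These are jointly feasible precisely when $(\log k)/k\lesssim N^{-(q-2)/(\binom{q}{2}-1)}$, and solving for the largest admissible $N$ gives $N=\Theta\bigl((k/\log k)^{(\binom{q}{2}-1)/(q-2)}\bigr)$. The stated exponent then falls out of the algebraic identity
\[
\frac{\binom{q}{2}-1}{q-2}=\frac{q^2-q-2}{2(q-2)}=\frac{(q+1)(q-2)}{2(q-2)}=\frac{q+1}{2},
\]
so that choosing $s\asymp(\log k)/k$ and $N=\lfloor c_q(k/\log k)^{(q+1)/2}\rfloor$ for a suitable constant $c_q>0$ satisfies both Local Lemma conditions once $k\ge k_q$, completing the proof.
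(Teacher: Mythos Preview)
The paper does not prove this theorem; it quotes it as a known result of Spencer and cites \cite{MR491337}, so there is no ``paper's own proof'' to compare against. Your proposal is a correct sketch of Spencer's original argument: the random graph $G(N,1-s)$ with $s\asymp(\log k)/k$, the asymmetric Lov\'asz Local Lemma applied to the clique and independence events, and the balancing of $z\,\Delta_{AB}$ against $s\binom{k}{2}$ rather than forcing $z\,\Delta_{AB}=O(1)$ are exactly the ingredients, and the algebraic identity $(\binom{q}{2}-1)/(q-2)=(q+1)/2$ delivers the exponent. The only work remaining to turn this into a full proof is bookkeeping---choosing explicit constants in $y$, $z$, $s$, and $c_q$ so that all inequalities hold simultaneously with room to spare, and checking that the weaker constraint $s^{\binom{q}{2}}N^{q-2}\lesssim 1$ is indeed implied by $s^{\binom{q}{2}-1}N^{q-2}\lesssim 1$ together with $s\to 0$---but none of this hides a difficulty.
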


\begin{theorem}\label{thm:tw-omega}
For every integer $q\ge 3$, let $\mathcal{G}_q$ denote the class of graphs $G$ with $\alpha(G)<q$ (that is, the class of $qK_1$-free graphs).
Then, $\mathcal{G}_q$ is a $(\tw,\omega)$-bounded graph class such that
\begin{itemize}
	\item $\mathcal{G}_q$ has a $(\tw,\omega)$-binding function that is a polynomial of degree $q-1$;
	\item for every $c>0$ and every $\epsilon>0$, the function
	$$f(k) = ck^{(q+1)/2-\epsilon}$$ is not a
	$(\tw,\omega)$-binding function for $\mathcal{G}_q$.
\end{itemize}
\end{theorem}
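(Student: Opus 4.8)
For the first bullet the plan is to invoke \cref{thm:forbidden induced subgraph} directly: since $\mathcal{G}_q$ is precisely the class of $qK_1$-free graphs and $qK_1$ is edgeless, that theorem supplies the binding function $f(k)=R(k+1,q)-2$. Using the estimate $R(k+1,q)\le\binom{k+q-1}{q-1}$ recorded just after Ramsey's theorem, this is a polynomial in $k$ of degree $q-1$, as required.

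For the second bullet I would show that for every fixed $c>0$ and $\epsilon>0$ there is a single graph $G\in\mathcal{G}_q$ with $\tw(G)>c\,\omega(G)^{(q+1)/2-\epsilon}$, which already rules out $f(k)=ck^{(q+1)/2-\epsilon}$ as a $(\tw,\omega)$-binding function. (If $(q+1)/2-\epsilon\le 0$ this is immediate, since complete graphs lie in $\mathcal{G}_q$ and have unbounded treewidth while $f$ is then bounded; so assume the exponent is positive.) The witnesses are the Ramsey-extremal graphs: for each $k$, the definition of $R(k,q)$ yields a graph $G_k$ on $N=R(k,q)-1$ vertices with no $K_k$ and no independent set of size $q$, so $\omega(G_k)<k$, $\alpha(G_k)\le q-1$, and hence $G_k\in\mathcal{G}_q$.

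The crux is a lower bound on $\tw(G_k)$. For this I would combine two elementary facts: every graph satisfies $\chi(G)\ge|V(G)|/\alpha(G)$, since each colour class is independent, and $\tw(G)\ge\chi(G)-1$ by \cref{thm:chi}. Together they give
\[
\tw(G_k)\;\ge\;\frac{N}{\alpha(G_k)}-1\;\ge\;\frac{R(k,q)-1}{q-1}-1\,.
\]
Inserting Spencer's bound $R(k,q)\ge c_q(k/\log k)^{(q+1)/2}$ from \cref{thm:Ramsey-lower-bound} then gives $\tw(G_k)=\Omega_q\!\bigl(k^{(q+1)/2}/(\log k)^{(q+1)/2}\bigr)$.

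It remains to compare this with $f(\omega(G_k))\le f(k)=ck^{(q+1)/2-\epsilon}$, valid because $f$ is increasing and $\omega(G_k)<k$. The ratio $\tw(G_k)/f(k)$ is of order $k^{\epsilon}/(\log k)^{(q+1)/2}$, which tends to infinity; hence $\tw(G_k)>f(\omega(G_k))$ for all sufficiently large $k$, and $f$ is not a $(\tw,\omega)$-binding function. The main obstacle is exactly this final comparison: one must verify that the polylogarithmic loss $(\log k)^{(q+1)/2}$ incurred by Spencer's lower bound is absorbed by the polynomial slack $k^{\epsilon}$ separating the true exponent $(q+1)/2$ from the claimed exponent $(q+1)/2-\epsilon$, which holds for every $\epsilon>0$.
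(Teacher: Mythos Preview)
Your proof is correct and follows essentially the same approach as the paper's: both use the Ramsey upper bound for the polynomial binding function, and both witness the lower bound via Ramsey-extremal graphs $G_k$ combined with the chain $\tw(G_k)\ge\chi(G_k)-1\ge|V(G_k)|/\alpha(G_k)-1$ and Spencer's lower bound on $R(k,q)$. The only cosmetic differences are that the paper argues by contradiction and takes $G_k$ on $n_k$ vertices where $n_k$ is defined directly in terms of Spencer's bound, whereas you work directly with a graph on $R(k,q)-1$ vertices; these are equivalent.
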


\begin{proof}
Fix an integer $q\ge 3$, and let $c_q$ and $k_q$ be the corresponding constants given by \cref{thm:Ramsey-lower-bound}.
The class $\mathcal{G}_q$ is $(\tw,\omega)$-bounded, since if $k$ is a positive integer and $G\in \mathcal{G}_q$ has clique number $k$, then $\tw(G)\le |V(G)|-1 < R(k+1,q)-1 = \mathcal{O}(k^{q-1})$.

Suppose for a contradiction that for some $c>0$ and
some $\epsilon>0$, we have
\begin{equation}\label{eq1}
\tw(G)\le c(\omega(G))^{(q+1)/2-\epsilon}
\end{equation}
for all $G\in \mathcal{G}_q$.

For each integer $k\ge k_q$, let $n_k$ be the largest integer such that
\[
	n_k<c_q\left(\frac{k}{\log k}\right)^{(q+1)/2}\,.
\]
Then \cref{thm:Ramsey-lower-bound} implies that $n_k<R(q,k)$.
By the definition of the Ramsey number $R(q,k)$, there exists a graph $G_k$ with exactly $n_k$ vertices that has no independent set of size $q$ and no clique of size $k$.
Since $\alpha(G_k)<q$, we have $G_k\in \mathcal{G}_q$.
Furthermore, since $|V(G_k)|\le \alpha(G_k)\cdot\chi(G_k)$
and $\chi(G_k)\le \tw(G_k)+1$ (see \cref{thm:chi}),
\cref{eq1} implies
\begin{eqnarray*}
n_k &=& |V(G_k)|\\&\le& \alpha(G_k)\cdot\chi(G_k)\\&<& q\cdot(\tw(G_k)+1)\\&\le &q\cdot(c(\omega(G_k))^{(q+1)/2-\epsilon}+1)\\
&<& q(ck^{(q+1)/2-\epsilon}+1)\,.
\end{eqnarray*}
By the definition of $n_k$, we have
$n_k\ge c_q\left(\frac{k}{\log k}\right)^{(q+1)/2}-1$.
Comparing this lower bound on $n_k$ with the above upper bound for $n_k$, we derive
\begin{equation}\label{eq3}
c_q\left(\frac{k}{\log k}\right)^{(q+1)/2}-1<q(ck^{(q+1)/2-\epsilon}+1)\,.
\end{equation}
Since $q$, $c_q$, $c$, and $\epsilon$ are fixed constants independent of $k$, \cref{eq3}
will be violated for all sufficiently large $k$, a contradiction.
\end{proof}

\Cref{thm:tw-omega} implies the following.

\begin{corollary}
There exists no polynomial that is a $(\tw,\omega)$-binding function for all polynomially  $(\tw,\omega)$-bounded graph classes.
\end{corollary}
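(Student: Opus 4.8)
The plan is to argue by contradiction, using the family of classes $\mathcal{G}_q$ supplied by \cref{thm:tw-omega} as a supply of polynomially $(\tw,\omega)$-bounded classes whose binding polynomials are forced to have unbounded degree. So I would suppose that some single polynomial $p$ is a $(\tw,\omega)$-binding function for every polynomially $(\tw,\omega)$-bounded graph class, and let $d=\deg p$. (If $p$ is identically zero or a nonpositive constant it fails to bind any class containing a graph with an edge, so I may assume the leading coefficient of $p$ is positive.)

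First I would fix the class against which $p$ is to fail. Each $\mathcal{G}_q$, the class of $qK_1$-free graphs, is by the first item of \cref{thm:tw-omega} $(\tw,\omega)$-bounded with a polynomial binding function, hence polynomially $(\tw,\omega)$-bounded; so under the assumption, $p$ must be a $(\tw,\omega)$-binding function for $\mathcal{G}_q$ for every $q\ge 3$. I would then choose an integer $q\ge 3$ large enough that $(q+1)/2>d$, for instance $q=\max\{3,\,2d+1\}$, and set $\epsilon=(q+1)/2-d>0$, so that the exponent $(q+1)/2-\epsilon$ equals exactly $\deg p$.

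Next I would dominate $p$ by a function of the precise type ruled out by the second item of \cref{thm:tw-omega}. Writing $C>0$ for the sum of the absolute values of the coefficients of $p$, one has $p(k)\le Ck^{d}=Ck^{(q+1)/2-\epsilon}$ for all $k\ge 1$, since $k^i\le k^{d}$ whenever $0\le i\le d$ and $k\ge 1$. Setting $f(k)=Ck^{(q+1)/2-\epsilon}$ gives a pointwise bound $f(k)\ge p(k)$ valid on the whole range $k\ge 1$.

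The contradiction then follows immediately. By the second item of \cref{thm:tw-omega}, $f$ is \emph{not} a $(\tw,\omega)$-binding function for $\mathcal{G}_q$; since $\mathcal{G}_q$ is closed under induced subgraphs, this produces a graph $G\in\mathcal{G}_q$ with $\tw(G)>f(\omega(G))$. As $G$ has at least one vertex we have $\omega(G)\ge 1$, whence $f(\omega(G))\ge p(\omega(G))$ and therefore $\tw(G)>p(\omega(G))$, contradicting that $p$ binds $\mathcal{G}_q$. The only point requiring any care is this last passage: the genuine content, namely that the required binding function for $\mathcal{G}_q$ must grow faster than any fixed degree, is already packaged inside \cref{thm:tw-omega} through Spencer's Ramsey lower bound, so the main obstacle has been discharged upstream; all that remains here is to convert the asymptotic statement into the clean pointwise domination $f\ge p$ by absorbing lower-order terms and constants into $C$, which is routine once the degree of $p$ has been beaten by choosing $q$ large.
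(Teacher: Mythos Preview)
Your argument is correct and is precisely the intended derivation: the paper does not spell out a proof of this corollary at all, merely stating that it follows from \cref{thm:tw-omega}, and what you have written is the natural unpacking of that implication. The only difference is level of detail; your handling of the degree comparison and the domination $p(k)\le Ck^{d}$ is exactly what one would supply if asked to make the one-line deduction explicit.
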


\section{Remarks on the \textsc{Maximum Weight Independent Set} problem}\label{section MWIS}

The \textsc{Maximum Weight Independent Set} (MWIS) problem takes as input a graph $G$ and a weight function $w:V(G)\to \mathbb{Q}_+$, and the task is to find an independent set $I$ in $G$ of maximum possible weight $w(I)$, where $w(I) = \sum_{x\in I}w(x)$.
The computational complexity of the MWIS problem for the $(\tw,\omega)$-bounded graph classes listed in \cref{table-results} can be summarized as follows.
\begin{enumerate}
\item As shown in \cref{sec:stmm}, $(\tw,\omega)$-boundedness is equivalent to bounded treewidth when $H$ is forbidden as a subgraph, topological minor, or minor. Furthermore, the upper bound on the treewidth is a computable constant (cf.~\cref{thm:computable-binding-function}).
Thus, in all these cases, the MWIS problem can be solved in linear time.
Indeed, using the linear-time algorithm of Bodlaender~\cite{MR1417901} we can compute a tree decomposition of the input graph $G$ of constant width and then use it to compute a maximum weight independent set in linear time, following, e.g., the approach of \citeauthor{MR1105479}~\cite{MR1105479}.

\item When $H$ is forbidden as an induced subgraph, following \cref{thm:forbidden induced subgraph}, the resulting graph class is  $(\tw,\omega)$-bounded if and only if $H$ is either an induced subgraph of $P_3$ or an edgeless graph. When $H\subseteq_{is} P_3$, every $H$-free graph is a disjoint union of complete graphs; thus computing a maximum weight independent set for any $H$-free graph is equivalent to finding a vertex of maximum weight in each clique, which can be done in linear time. If $H$ is edgeless, then every $H$-free graph contains only independent sets of size at most $|V(H)|-1$. Thus, we can enumerate all independent sets of $G$ and compute one of maximum weight in time $\mathcal{O}(|V(G)|^{|V(H)|-1})$.

\item When $H$ is forbidden as an induced topological minor, following \cref{thm:itm-free}, the resulting graph class is  $(\tw,\omega)$-bounded if and only if $H$ is either an induced subgraph of $K_4^-$, of $C_4$, or an edgeless graph.

When $H \subseteq_{is} C_4$, the class of $H$-induced-topological-minor free graphs is a subclass of the class of chordal graphs for which a linear-time algorithm for the MWIS problem is known~\cite{MR0392683}.

When $H$ is edgeless, every $H$-induced-topological-minor-free graph is also $H$-free, and hence a maximum weight independent set in $G$ can be computed in time $\mathcal{O}(|V(G)|^{|V(H)|-1})$, as above.

If $H\subseteq_{is} K_4^-$, then by \cref{diamond-itm-free graphs are block-cactus-graphs}, every $H$-induced-topological-minor-free graph is a block-cactus graph, and a polynomial-time solvability for the MWIS problem follows from the fact that the clique-width of every block-cactus graphs is at most $6$~\cite{MR2536473} and the algorithm given by a metatheorem of \citeauthor{MR1739644}~\cite{MR1739644}.

\item Finally, when $H$ is forbidden as an induced minor, following \cref{thm:necessary forbidden induced minors}, the resulting graph class is  $(\tw,\omega)$-bounded if and only if $H$ is either an induced subgraph of $W_4$, of $K_5^-$, of
$K_{2,q}$, or of $K_{2,q}^+$ for some $q\in \mathbb{N}$.
The computational complexity of the MWIS problem is open for the class of $H$-induced-minor-free graphs whenever $H$ is isomorphic to
$W_4$, to $K_5^-$, to $K_{2,q}$, or to $K_{2,q}^+$ for some $q\ge 3$. However, as we show next, the problem is solvable in polynomial time in the class of $K_{1,q}$\=/induced-minor-free graphs for every positive integer $q$.
\end{enumerate}

\begin{theorem}\label{thm:K_{1,q}}
For every positive integer $q$, there is a polynomial-time algorithm for the
\textsc{Maximum Weight Independent Set} problem in the class of $K_{1,q}$\=/induced-minor-free graphs.
\end{theorem}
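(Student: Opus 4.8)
The plan is to produce, for every graph $G$ in the class, a tree decomposition whose bags induce subgraphs of bounded independence number, and then to solve the problem by dynamic programming over this decomposition. The starting point is the characterization recalled above: $G$ contains $K_{1,q}$ as an induced minor if and only if there is an independent set $S$ with $|S|=q$ and a connected component $C$ of $G-S$ such that every vertex of $S$ has a neighbor in $C$. Equivalently, $G$ is $K_{1,q}$-induced-minor-free precisely when, for every independent set $I$ and every component $C$ of $G-I$, at most $q-1$ vertices of $I$ have a neighbor in $C$.

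First I would record the following consequence, immediate from this characterization together with \cref{lem:separator}: every minimal separator $T$ of $G$ satisfies $\alpha(G[T])\le q-1$. Indeed, if $I\subseteq T$ were an independent set of size $q$, then taking a $T$-full component $C$ of $G-T$ (two of which exist by \cref{lem:separator}) and letting $C^{+}$ be the component of $G-I$ containing $C$, every vertex of $I$ would have a neighbor in $C\subseteq C^{+}$, producing a $K_{1,q}$ induced minor. The crucial---and most delicate---step is to upgrade this from minimal separators to the bags of an actual tree decomposition: I would show that there is a constant $g(q)$ such that $G$ admits a tree decomposition, computable in polynomial time, in which every bag $X$ satisfies $\alpha(G[X])\le g(q)$. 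The natural route is via minimal triangulations, whose associated clique trees are tree decompositions of $G$ whose bags are the potential maximal cliques of $G$; I would argue that every potential maximal clique has independence number bounded by a function of $q$. Here one uses that a potential maximal clique $\Omega$ is glued together by minimal separators of the form $N(C)$, where $C$ ranges over the components of $G-\Omega$, and that a large independent set inside $\Omega$, together with the connected components realizing its separator structure, would again yield a $K_{1,q}$ induced minor.

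With such a decomposition in hand, the algorithmic part is routine. I would run the standard \textsc{Maximum Weight Independent Set} dynamic program over the tree decomposition, but---since the bags may be arbitrarily large (they can contain large cliques, so the width is unbounded)---the states at a bag $X$ are taken to be the \emph{independent} subsets of $X$ rather than all subsets. Because $\alpha(G[X])\le g(q)$, each bag has at most $\mathcal{O}(|V(G)|^{g(q)})$ independent subsets, so the dynamic program runs in time polynomial in $|V(G)|$ for every fixed $q$, and returns a maximum weight independent set.

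The main obstacle is exactly the structural claim that potential maximal cliques (equivalently, the bags of the tree decomposition) have bounded independence number. The difficulty is that the relevant separators and bags are not of bounded \emph{size}: a minimal separator of $G$ may contain arbitrarily large cliques (for instance in complete graphs, or in graphs obtained by substituting large cliques into a short cycle), so the approach used earlier for $K_{2,q}$-induced-minor-free graphs---where minimal separators have bounded size via Ramsey's theorem (cf.\ \cref{K2n-im-free graphs have bdd tw})---does not apply. What must be controlled is the \emph{independence number} rather than the cardinality, and this requires extracting, from any large independent set that lives inside a bag, the connected witness components that turn it into a $K_{1,q}$ induced minor.
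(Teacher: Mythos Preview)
Your proposal has a genuine gap: the structural claim that potential maximal cliques of a $K_{1,q}$-induced-minor-free graph have independence number bounded by a function of~$q$ is the crux of your whole argument, and you do not prove it---you explicitly flag it as the ``main obstacle'' and only gesture at the mechanism. The observation that every minimal separator has independence number at most $q-1$ is correct and your proof of it is fine, but passing from minimal separators to potential maximal cliques is not automatic. Knowing that each $N(C)$ (for $C$ a component of $G-\Omega$) has small independence number tells you that every pair of nonadjacent vertices in an independent set $I\subseteq\Omega$ is ``covered'' by some component with at most $q-1$ neighbors in~$I$; but covering all pairs by small hyperedges does not by itself bound~$|I|$. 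One must exploit additional connectivity---in particular, how the components of $G-\Omega$ merge through $\Omega\setminus I$ when one deletes only~$I$---and you have not done this. (The claim is in fact true, and is developed in later work on the tree-independence number, but it is not a one-line consequence of what you have written.)

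More to the point, the paper sidesteps this entire difficulty with a much more elementary argument. Working on a connected $G$, fix any vertex $v$ and compute the BFS layers $N_0,N_1,\ldots,N_\ell$. For each $i\ge 1$ the set $\bigcup_{j<i}N_j$ induces a connected subgraph adjacent to every vertex of $N_i$; hence any independent set of size $q$ inside $N_i$ would immediately yield a $K_{1,q}$ induced minor, so $\alpha(G[N_i])\le q-1$. Since every edge of $G$ lies within a layer or between consecutive layers, a simple dynamic program over $i$---with states the at most $\mathcal{O}(|V(G)|^{q-1})$ independent subsets of $N_i$---computes a maximum weight independent set in time $\mathcal{O}(|V(G)|^{2q-1})$. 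No triangulations, no potential maximal cliques, and no unproved lemma.
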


\begin{proof}
Fix a positive integer $q$, and let $\G$ be the class of $K_{1,q}$-induced-minor-free graphs.
Consider a graph $G\in \G$ equipped with a weight function $w:V(G)\to \mathbb{Q}_+$.
We may assume without loss of generality that $G$ is connected, since otherwise we can solve the
problem separately for each connected component and combine the solutions.

Fix a vertex $v\in V(G)$, and run breadth-first search from $v$ to compute the distance levels from $v$, that is, the sets $N_i$ for all $i\ge 0$, where $N_i$ is the set of vertices in $G$ that are at distance $i$ from $v$. In particular, $N_0 = \{v\}$. We denote by $\ell$ the eccentricity of $v$, that is, the maximum $i\ge 0$ such that $N_i\neq \emptyset$.

The algorithm will be based on the observation that, for each $i\in \{1,\ldots, \ell\}$, the subgraph of $G$ induced by $N_i$ has independence number less than $q$. Indeed, if $G$ contains an independent set $I$ such that $|I|\ge q$ and $I\subseteq N_i$ for some $i\in \{1,\ldots, \ell\}$, then the set $\cup_{j<i}N_j$ (which induces a connected subgraph of $G$) along with the singletons $\{x\}$,  $x\in I$, form an induced minor model of $K_{1,q}$ in $G$, a contradiction.

The algorithm uses a dynamic programming approach. It processes the distance levels from $v$ one at a time,
trying all possibilities of which vertices from the current distance level will appear in an independent set that is a partial solution for the subgraph of $G$ induced by the distance levels considered so far.
Since we cannot include more than $q-1$ vertices from any distance level $N_i$ for $i\ge 1$, we only need a polynomial number of guesses for each distance level.
Formally, the algorithm computes the following values:
for each $i\in \{0,1,\ldots, \ell\}$ and each independent sets $S$ such that $S\subseteq N_{i}$,
the value $\alpha[i,S]$, defined as the maximum weight of an independent set $I$ in $G$ such that $I\subseteq \cup_{j\le i}N_j$ and $I\cap N_i = S$.
Knowing all these values $\alpha[i,S]$, we can then compute the maximum weight of an independent set in $G$ as the maximum value of $\alpha[\ell,S]$ over all independent sets $S\subseteq N_{\ell}$.
Indeed, if $I^*$ is an arbitrary maximum weight independent set in $G$ and $S^* = I^*\cap N_{\ell}$, then $\alpha[\ell,S^*] \ge w(I^*)$,
while, clearly, for any independent set $S\subseteq N_{\ell}$, we have $\alpha[\ell,S]\le w(I^*)$.

Let us describe the recurrence relation for the values of $\alpha[i,S]$ forming the dynamic programming table.
For all $i\in \{0,1,\ldots, \ell\}$ and all independent sets $S\subseteq N_i$, we have
\[
\alpha[i,S] = \left\{
\begin{array}{ll}
w(S)&\text{if } i = 0\\
w(S)+\max \{\alpha[i-1,T] \mid T\subseteq N_{i-1} \textrm{ and }& \\
\hspace{4.2cm}\textrm{$S\cup T$ is independent in $G$} \} &\text{otherwise.}
\end{array}
\right.
\]
The validity of the above recurrence relation can be easily verified. Thus, the algorithm is correct.

It remains to analyze the time complexity of the algorithm. Observe that there are at most $|V(G)|$ distance levels.
For each distance level $N_i$, $i \in \{0,\dots,\ell\}$, the algorithm enumerates all independent sets
$S\subseteq N_i$. For the distance level $N_0$, we only have two choices for $S$ (either $S = \emptyset$ or $S = \{v\}$).
For distance levels $N_i$, $i \in \{1,\dots,\ell\}$, there are $\sum_{0 \leq k \leq q-1} \binom{|N_{i}|}{k} = \mathcal{O}(|N_i|^{q-1}) = \mathcal{O}(|V(G)|^{q-1})$ choices for the set $S$.
Assuming that the graph $G$ is given by the adjacency matrix representation, we can check in constant time
$\mathcal{O}(q^2)$ whether a set of at most $q-1$ vertices is independent.
Thus, all these independent sets can be enumerated in total time $\mathcal{O}(|V(G)|^{q})$.
For each distance level $N_i$ with $i\ge 1$, we iterate over all $\mathcal{O}(|V(G)|^{q-1})$ independent sets
$S\subseteq N_{i}$ and, for each such set $S$, over all independent sets $T\subseteq N_{i-1}$.
For such a pair $(S,T)$ we can check in (constant) time $\mathcal{O}(q^2)$ whether $S\cup T$ is independent, and if so, we record the value of $\alpha[i-1,T]$. Thus, for a fixed independent set $S\subseteq N_{i}$, we can compute the value of $\alpha[i,S]$ in constant time if $i = 1$ and in time $\mathcal{O}(|V(G)|^{q-1})$ if $i\ge 2$.
It follows that all the values of $\alpha[i,S]$ can be computed in time
$\mathcal{O}(|V(G)|^{2q-1})$.
Finally, the maximum weight of an independent set in $G$ can be computed in time
$\mathcal{O}(|V(G)|^{q-1})$ by computing the maximum value of $\alpha[\ell,S]$ over all independent sets $S \subseteq N_{\ell}$.
The overall worst-case time complexity of the algorithm is $\mathcal{O}(|V(G)|^{2q-1})$.
An optimal solution can also be computed if every time we compute the value of $\alpha[i,S]$, we also compute an independent set $I$ achieving the maximum in the definition of $\alpha[i,S]$.
\end{proof}

Let us note that the result of \cref{thm:K_{1,q}} contrasts with the fact that if $K_{1,q}$ is only excluded as induced subgraph, then the MWIS problem
\begin{itemize}
\item can be solved optimally in polynomial time for $q\le 3$ (see~\cite{MR579076,MR1845110}),
\item can be approximated in polynomial time to within a factor of $q/2$ for all $q$ (see~\cite{MR1810314}),
\item remains \APX-hard for all $q\ge 4$ already for unit weight functions (see~\cite{MR1756204}).
\end{itemize}

\section{Discussion}\label{sec:discussion}

We obtained a first set of results aimed towards classifying $(\tw,\omega)$-bounded graph classes by considering six well-known graph containment relations and, for each of them, characterizing the graphs $H$ for which the class of graphs excluding $H$ is $(\tw,\omega)$-bounded. We conclude the paper by mentioning some open questions and research directions for further investigations of $(\tw,\omega)$-bounded graph classes.

\subsection{Larger sets of forbidden structures}

A natural question motivated by the results of this paper is

\begin{question}\label{question1}
Which graph classes defined by larger finite sets of forbidden structures (with respect to various graph containment relations) are $(\tw,\omega)$-bounded?
\end{question}

For the subgraph, topological minor, and minor relations, the answers to this question are similar to the results for a single forbidden structure developed in~\cref{sec:stmm}, as we explain next.
\begin{itemize}
\item Robertson and Seymour showed that every minor-closed graph class that does not equal the class of all graphs can be characterized by a finite set of forbidden minors (see, e.g.,~\cite{MR3644391}). The same approach as that used to prove \cref{thm:H planar is cool} shows that a proper minor-closed graph class is $(\tw,\omega)$-bounded if and only if it excludes some planar graph.
\item The proof of \cref{thm:forbidden topological minor} can also be easily  generalized to show that a graph class defined by excluding finitely many topological minors is $(\tw,\omega)$-bounded if and only if it excludes at least one graph that is subcubic and planar.
\item The case of subgraphs is only slightly more complicated, but again the proof of \cref{thm:forbidden subgraph} can be adapted to show that a graph class defined by excluding finitely many subgraphs is $(\tw,\omega)$-bounded if and only if it excludes at least one graph from the class $\mathcal{S}$.
\end{itemize}

In all the above cases, the resulting graph class is $(\tw,\omega)$-bounded if and only if it has bounded treewidth.

For the induced subgraph relation, the answer to~\cref{question1} is a direct consequence of the following recent characterization of graph classes of bounded treewidth defined by finitely many forbidden induced subgraphs.

\begin{theorem}[Lozin and Razgon~\cite{lozin2021treewidth}]\label{LozinRazgon}
For any graphs $H_1,\ldots, H_p$, the class of $\{H_1,\ldots, H_p\}$-free graphs has bounded treewidth if and only if the set $\{H_1,\ldots, H_p\}$ contains a complete graph, a complete bipartite graph, a graph from $\mathcal{S}$, and the line graph of a graph from $\mathcal{S}$.
\end{theorem}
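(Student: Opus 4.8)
The plan is to prove the two implications separately, viewing the four membership conditions on $\{H_1,\dots,H_p\}$ as obstructions that must each be hit. It is convenient to organize everything around four canonical families of unbounded treewidth: the complete graphs, the balanced complete bipartite graphs, the $q$-subdivided walls, and the line graphs of $q$-subdivided walls; the latter three are exactly those shown to be $(\tw,\omega)$-unbounded in \cref{lemma:H is not any H}, and the first is unbounded because its clique number is unbounded. The backbone of the sufficiency direction is an \emph{induced} analogue of the grid theorem underlying \cref{planar minor implies bounded treewidth}: there is a function $g$ such that every graph $G$ with $\tw(G)>g(t)$ contains, as an induced subgraph, at least one of $K_t$, $K_{t,t}$, a sufficiently subdivided $t\times t$ wall, or the line graph of such a subdivided wall. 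This induced wall theorem is the deep structural input, and I expect it to be the principal obstacle.

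Granting it, sufficiency is routine. Assume $\{H_1,\dots,H_p\}$ contains a complete graph $K_a$, a complete bipartite graph $K_{b,b}$, a graph $S\in\mathcal S$, and a line graph $L(T)$ with $T\in\mathcal S$. I would choose $t$ so large that $K_a\subseteq_{is}K_t$ and $K_{b,b}\subseteq_{is}K_{t,t}$, and so that $S$ is an induced subgraph of every sufficiently large subdivided $t\times t$ wall while $L(T)$ is an induced subgraph of its line graph. The latter two facts are the only genuinely combinatorial point, and I would verify them by explicit embedding: a long induced path and an induced subdivided claw are readily located inside a large subdivided wall, disjoint components are placed in far-apart regions so that no unwanted adjacencies arise, and applying the line-graph operation transfers these embeddings to the line-graph family. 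Each of the four possible outcomes of the induced wall theorem then already contains a forbidden induced subgraph, so no graph with $\tw(G)>g(t)$ is $\{H_1,\dots,H_p\}$-free, whence the class has treewidth at most $g(t)$.

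For necessity I argue the contrapositive type-by-type: if $\{H_1,\dots,H_p\}$ omits a graph of one of the four types, I exhibit an unbounded-treewidth subfamily of the class. Two cases are immediate because induced subgraphs of $K_n$ (respectively $K_{n,n}$) are again complete (respectively complete bipartite). If no $H_i$ is complete, then no $H_i$ embeds in any $K_n$, so all complete graphs lie in the class, which is therefore of unbounded treewidth; symmetrically, if no $H_i$ is complete bipartite, the family $\{K_{n,n}\}$ witnesses unbounded treewidth by \cref{lemma:H is not any H} (the degenerate edgeless case being subsumed by writing $\overline{K_m}=K_{0,m}$).

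The sparse types are the crux. The natural witnesses are the subdivided walls for $\mathcal S$ and their line graphs for the line-graph condition, both of unbounded treewidth by \cref{lemma:H is not any H}, and one must check that they avoid every $H_i$. Here lies the main difficulty: a large subdivided wall contains, as induced subgraphs, not only every member of $\mathcal S$ but also subcubic forests with several branch vertices, which do not belong to $\mathcal S$; consequently excluding a single non-$\mathcal S$ graph can already bound the treewidth of the wall family, so the four conditions do not decouple into four independent statements, and the naive witness fails precisely when $\{H_1,\dots,H_p\}$ contains such a forest. I would resolve this by using all four hypotheses jointly and distinguishing the canonical families by their local invariants: subdivided walls are subcubic, triangle-free, and of arbitrarily large girth, whereas their line graphs are claw-free but contain triangles. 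Thus any $H_i$ of maximum degree at least four or with a short cycle cannot embed in a highly subdivided wall, any $H_i$ containing an induced claw (such as a forest with a branch vertex) cannot embed in a line-graph family member, and so on. The heart of the argument is to match, for each configuration of present and omitted types, a canonical family avoiding all of $H_1,\dots,H_p$ — equivalently, to pin down the minimal hereditary classes of unbounded treewidth. This joint case analysis, together with the induced wall theorem powering sufficiency, is where essentially all of the work resides.
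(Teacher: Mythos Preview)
The paper does not prove this theorem; it is quoted as a result of Lozin and Razgon and used as a black box to derive the subsequent corollary. So there is no paper proof to compare against, and I assess your sketch on its own merits.

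Your sufficiency direction is correctly structured: the induced analogue of the grid theorem is indeed the deep input, and once it is granted the four embeddings are routine. Your necessity direction, however, contains a genuine confusion. You correctly pick $q$-subdivided walls as the witness when no $H_i$ lies in $\mathcal S$, then worry that such walls contain non-$\mathcal S$ subcubic forests as induced subgraphs, and conclude that the four cases must be handled ``jointly''. But in the contrapositive only \emph{one} condition fails, so no joint argument is even available; the difficulty you perceive is an artefact of not exploiting the freedom to choose $q$. In a $q$-subdivided wall, any two branch vertices are at distance at least $q+1$ and the girth exceeds $q$; hence if $q\ge \max_i |V(H_i)|$, every induced subgraph on at most $\max_i |V(H_i)|$ vertices is acyclic, subcubic, and has at most one branch vertex per component, i.e., lies in $\mathcal S$. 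So for this fixed $q$ no $H_i$ embeds, and the family of $q$-subdivided walls of growing height witnesses unbounded treewidth. The line-graph case is symmetric: an induced subgraph of $L(W)$ on a set $F\subseteq E(W)$ is the line graph of the subgraph of $W$ with edge set $F$, and for $q\ge \max_i|V(H_i)|$ every such subgraph with at most $\max_i|V(H_i)|$ edges lies in $\mathcal S$. The four necessity cases are therefore completely independent and elementary; no joint case analysis is needed.
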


\begin{corollary}
For any graphs $H_1,\ldots, H_p$, the following conditions are equivalent.
\begin{enumerate}
\item The class of $\{H_1,\ldots, H_p\}$-free graphs is $(\tw,\omega)$-bounded.\label[condition]{condition 1}
\item The class of $\{K_4,H_1,\ldots, H_p\}$-free graphs has bounded treewidth.\label[condition]{condition 2}
\item The set $\{H_1,\ldots, H_p\}$ contains a complete bipartite graph, a graph from $\mathcal{S}$, and the line graph of a graph from $\mathcal{S}$.\label[condition]{condition 3}
\end{enumerate}
\end{corollary}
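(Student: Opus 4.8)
The plan is to establish the cycle of implications $(1)\Rightarrow(2)\Rightarrow(3)\Rightarrow(1)$, using the Lozin--Razgon characterization (\cref{LozinRazgon}) as the main engine. The guiding observation is that, among $\{H_1,\ldots,H_p\}$-free graphs, those of clique number at most $r$ are precisely the $\{K_{r+1},H_1,\ldots,H_p\}$-free graphs. Since the class of $\{H_1,\ldots,H_p\}$-free graphs is hereditary, $(\tw,\omega)$-boundedness is equivalent to the requirement that every one of these subclasses, for $r\in\N$, has bounded treewidth. This lets me translate each $(\tw,\omega)$-statement into a bounded-treewidth statement for a class defined by one extra forbidden complete graph, to which \cref{LozinRazgon} applies verbatim.

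For $(1)\Rightarrow(2)$, I would note that every $\{K_4,H_1,\ldots,H_p\}$-free graph $G$ is $\{H_1,\ldots,H_p\}$-free and satisfies $\omega(G)\le 3$, so if $f$ is a $(\tw,\omega)$-binding function for the $\{H_1,\ldots,H_p\}$-free graphs, then $\tw(G)\le\max\{f(1),f(2),f(3)\}$, a constant; hence the treewidth is bounded. For $(2)\Rightarrow(3)$, I would apply \cref{LozinRazgon} to the set $\{K_4,H_1,\ldots,H_p\}$: bounded treewidth forces this set to contain a complete graph, a complete bipartite graph, a graph from $\mathcal{S}$, and the line graph of a graph from $\mathcal{S}$. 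Here $K_4$ can only serve as the complete graph, since $K_4$ is not complete bipartite (it contains a triangle), does not belong to $\mathcal{S}$ (it is not a forest), and is not the line graph of any graph in $\mathcal{S}$. The last point is the only one requiring a word of justification: every graph in $\mathcal{S}$ is a forest of maximum degree at most $3$, and in the line graph of a triangle-free graph a clique corresponds to a set of edges sharing a common endpoint, hence has size at most the maximum degree; thus such a line graph is $K_4$-free. Consequently the complete bipartite graph, the graph from $\mathcal{S}$, and the line graph of a graph from $\mathcal{S}$ must all occur among $H_1,\ldots,H_p$, which is exactly $(3)$.

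For $(3)\Rightarrow(1)$, I would argue that for every $r\in\N$ the set $\{K_{r+1},H_1,\ldots,H_p\}$ contains a complete graph, namely $K_{r+1}$, together with the complete bipartite graph, the graph from $\mathcal{S}$, and the line graph of a graph from $\mathcal{S}$ supplied by $(3)$; \cref{LozinRazgon} then yields a constant bound $g(r)$ on the treewidth of $\{K_{r+1},H_1,\ldots,H_p\}$-free graphs. Setting $f(r)=g(r)$ gives a $(\tw,\omega)$-binding function: any induced subgraph $G'$ of a $\{H_1,\ldots,H_p\}$-free graph is again $\{H_1,\ldots,H_p\}$-free and $K_{\omega(G')+1}$-free, so $\tw(G')\le g(\omega(G'))=f(\omega(G'))$. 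The whole argument is essentially a bookkeeping translation through \cref{LozinRazgon}; I expect the only (very minor) obstacle to be verifying that $K_4$ contributes nothing beyond the complete-graph ingredient, and in particular the clique-number argument showing that $K_4$ is not the line graph of any graph in $\mathcal{S}$.
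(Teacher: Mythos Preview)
Your proposal is correct and follows essentially the same cycle $(1)\Rightarrow(2)\Rightarrow(3)\Rightarrow(1)$ via \cref{LozinRazgon} as the paper's proof. You are in fact slightly more careful than the paper in the step $(2)\Rightarrow(3)$: the paper simply invokes \cref{LozinRazgon} and moves on, whereas you explicitly verify that $K_4$ cannot serve as the complete bipartite graph, the graph from $\mathcal{S}$, or the line graph of a graph from $\mathcal{S}$, so that these three witnesses must lie among $H_1,\ldots,H_p$.
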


\begin{proof}
If the class of $\{H_1,\ldots, H_p\}$-free graphs has a $(\tw,\omega)$-binding function~$f$, then the treewidth of any $\{K_{4},H_1,\ldots, H_p\}$-free graph is at most $f(3)$. Thus, \cref{condition 1} implies \cref{condition 2}.
By~\cref{LozinRazgon}, \cref{condition 2} implies \cref{condition 3}.
Finally, if the set $\{H_1,\ldots, H_p\}$ contains a complete bipartite graph, a graph from $\mathcal{S}$, and the line graph of a graph from $\mathcal{S}$, then by~\cref{LozinRazgon} for every positive integer $k$ there exists a constant $f(k)$ such that every $\{K_{k+1},H_1,\ldots, H_p\}$-free graph has treewidth at most $f(k)$.
Thus, the class of $\{H_1,\ldots, H_p\}$-free graphs is $(\tw,\omega)$-bounded and \cref{condition 3} implies \cref{condition 1}.
\end{proof}

For the induced topological minor and induced minor relations, \cref{question1} remains open.

\subsection{Binding functions}

All the $(\tw,\omega)$-bounded graph classes identified in this paper have a polynomial $(\tw,\omega)$-binding function.
The arguments presented in this paper establish this claim for all cases, except possibly for the classes of $H$-induced-minor-free graphs with $H\in \{W_4,K_5^-\}$, where the proof depends on the constant involved in the excluded minor theorem of Robertson and Seymour~\cite{MR1999736}. 
Polynomial $(\tw,\omega)$-binding functions for these two cases will be established in a future publication.
A natural question arises.

\begin{question}\label{question2}
Does every $(\tw,\omega)$-bounded graph class have a polynomial $(\tw,\omega)$-binding function?
\end{question}

A positive answer to this question would answer the analogous question of Esperet (see~\cite{esperet2017graph}) on $\chi$-boundedness for the case of $(\tw,\omega)$-bounded graph classes.

For every positive integer $t$, the class of intersection graphs of connected subgraphs of graphs with treewidth at most $t$ has a linear $(\tw,\omega)$-binding function (see~\cite{MR1090614}). For $t\in \{1,2\}$, the corresponding graph classes contain the well-known graph classes of chordal and circular-arc graphs.
More generally, it is an interesting question
to determine necessary and/or sufficient conditions for the existence of a linear $(\tw,\omega)$-binding function.

\begin{question}\label{question3}
Which graph classes have a linear $(\tw,\omega)$-binding function?
\end{question}

\subsection{Algorithmic implications}

We already discussed in \cref{sec:algo} some algorithmic implications of $(\tw,\omega)$-boundedness for variants of the  clique and coloring problems. In particular,  \cref{thm:motivation-2,cor:approx-omega-in-linearly-tw-omega-bounded-classes} give approximation algorithms for the clique number in graph classes having a polynomial, respectively, linear $(\tw,\omega)$-binding function. 
In order to strengthen these results to obtain improved approximation algorithms for the \textsc{Maximum Clique} problem that would actually compute an approximate solution and not only an approximate value, we would need to know something more about the $(\tw,\omega)$-bounded graph class $\mathcal{G}$.

\begin{question}\label{question4}
Which graph classes $\mathcal{G}$ having a polynomial $(\tw,\omega)$-binding function~$f$ admit a polynomial-time algorithm that, given a tree decomposition of width $t$ for a graph $G\in \mathcal{G}$, computes a clique $C$ in $G$ such that $t = \mathcal{O}(f(|C|))$?
\end{question}

It is a well-known open problem whether treewidth can be approximated within a constant factor (see, e.g.,~\cite{MR2411037,DBLP:journals/jair/WuAPL14}).
Perhaps the following special case could be easier.

\begin{question}\label{question5}
Can the treewidth be approximated within a constant factor on $(\tw,\omega)$-bounded classes?
\end{question}

The question is also open if an additional constraint is imposed on the binding function, for example, that it is polynomial or linear. Note that for graph classes with a linear $(\tw,\omega)$-binding function, a constant factor approximation for treewidth would also imply a constant factor approximation for the clique number.

Finally, it would be interesting to see whether $(\tw,\omega)$-boundedness has any further algorithmic implications, for example, for problems related to independent sets.
The computational complexity of the MWIS problem in $(\tw,\omega)$-bounded graph classes is not yet well understood.
Nonetheless, partial results exist for $(\tw,\omega)$-bounded graph classes forbidding a unique graph $H$ with respect to one of our six graph containment relations, as discussed in \cref{section MWIS}.
To the best of our knowledge, the following question is open.

\begin{question}\label{question6}
Is there a $(\tw,\omega)$-bounded graph class in which the MWIS problem is \NP-hard?
\end{question}

\subsection{Chromatic number versus clique number and other invariant pairs}

A similar study as the one performed in this work could, at least in principle, be attempted for $(\rho,\sigma)$-bounded graph classes for other choices of pairs of invariants $\rho$ and $\sigma$.
Such questions were partially already considered in the literature, for example, for the pair $(\rho,\sigma)=(\delta, \chi)$~\cite{MR2811077} and, most notably, for the pair $(\rho,\sigma)=(\chi, \omega)$, that is, for the case of $\chi$-bounded graph classes (see~\cite{MR4174126}).
Dirac~\cite{MR195757} and Jung~\cite{MR188101} proved
that excluding any complete graph $H$ as a topological minor
leads to a class of graphs with bounded chromatic number, and consequently the same is true if any graph $H$ is excluded as a topological minor or as a minor. Furthermore, it follows from~\cite{MR102081} and~\cite[Exercise 5.2.43]{MR1367739} that the class of $H$-subgraph-free graphs is $\chi$-bounded if and only if $H$ is acyclic (in which case the chromatic number is bounded). For the induced variants of these relations, the situation is less clear.
While the class of $H$-free graphs is not $\chi$-bounded whenever $H$ contains a cycle~\cite{MR102081}, a famous and still open conjecture, proposed independently by Gy\'arf\'as~\cite{MR951359} and Sumner~\cite{MR634555}, states that for every tree $H$, every class of $H$-free graphs is $\chi$-bounded.
Furthermore, Scott proved in~\cite{MR1437291} that for any tree $H$, the class of $H$-induced-topological-minor-free graphs is $\chi$-bounded and conjectured that the same is true for any graph $H$.
While Scott's conjecture has been disproved (see, e.g.,~\cite{MR3484735}) and several special cases were proved, for example, the case when $H$ is cycle~\cite{MR3759907} (settling yet another conjecture of Gy\'arf\'as),
we are still far from a complete characterization of graphs $H$ such that the class of $H$-induced-topological-minor-free graphs is $\chi$-bounded.
Finally, it seems that the induced minor relation has been studied the least with respect to $\chi$-boundedness. In view of this and the fact that \Cref{chi-bounded-induced-minor} identifies an infinite family of graphs $H$ such that the class of $H$-induced-minor-free graphs is $\chi$-bounded, we thus propose the following.

\begin{question}\label{question7}
For which graphs $H$ is the class of $H$-induced-minor-free graphs $\chi$-bounded?
\end{question}

Note that there are graphs $H$ such that the class of $H$-induced-minor-free graphs is not $\chi$-bounded.
For example, if $H$ is the graph obtained from the complete graph $K_5$ by subdividing each edge, then the class of $H$-induced-minor-free graphs contains the class of string graphs~\cite{MR2059098,sinden1966topology}, which is not $\chi$-bounded~\cite{MR3171778}.

\subsection*{Acknowledgements}

We are grateful to the anonymous reviewer for a careful reading of our paper and several helpful remarks, to Irena Penev for asking about implications of our results for $\chi$-bounded graph classes, to Jean-Florent Raymond for early discussions on treewidth of $1$-perfectly orientable graphs, and to Bart M.~P.~Jansen for asking a question that motivated~\cref{remark:K2n}.

\printbibliography
\end{document}